\theoremstyle{theorem}
\newtheorem{theorem}{Theorem}[section]
\newtheorem{proposition}[theorem]{Proposition}
\newtheorem{lemma}[theorem]{Lemma}
\newtheorem{corollary}[theorem]{Corollary}
\theoremstyle{remark}
\newtheorem{remark}[theorem]{Remark}
\theoremstyle{definition}
\newtheorem{assumption}[theorem]{Assumption}
\newtheorem*{acknowledgement}{Acknowledgements}
\numberwithin{equation}{section}
\numberwithin{figure}{section}
\newcommand{\R}{\mathbb{R}}
\newcommand{\C}{\mathbb{C}}
\newcommand{\N}{\mathbb{N}}
\newcommand{\dd}{{\rm d}}
\newcommand{\fstop}{\; \text{.}}
\newcommand{\comma}{\; \text{,}\;\;}
\newcommand{\restr}[1]{\rvert_{#1}}
\newcommand{\tonde}[1]{\left(#1\right)}
\newcommand{\quadre}[1]{\left[#1\right]}
\newcommand{\tttonde}[1]{(#1)}
\newcommand{\abs}[1]{\left\lvert#1\right\rvert}
\newcommand{\tabs}[1]{\big\lvert#1\big\rvert}
\newcommand{\ttabs}[1]{\lvert#1\rvert}
\newcommand{\emparg}{{\,\cdot\,}}
\newcommand{\eqdef}{\coloneqq}
\newcommand{\defeq}{\eqqcolon}
\newcommand{\car}{\mathds{1}}
\newcommand{\scalar}[2]{\left\langle #1\, \middle \vert\, #2 \right\rangle}
\newcommand{\ttscalar}[2]{\langle #1 | #2 \rangle}
\newcommand{\norm}[1]{\left\lVert#1\right\rVert}
\newcommand{\ttnorm}[1]{\|#1\|}
\newcommand{\set}[1]{\left\{#1\right\}}							%Insieme, graffe
\newcommand{\ttset}[1]{\{#1\}}
\newcommand{\cD}{\ensuremath{\mathcal D}} 
\newcommand{\cE}{\ensuremath{\mathcal E}}
\newcommand{\cL}{\ensuremath{\mathcal L}}
\newcommand{\cP}{\ensuremath{\mathcal P}}
\newcommand{\cS}{\ensuremath{\mathcal S}}
\author{Seonwoo Kim}
\address{Yonsei University}
\date{}
\email{seonwookim@yonsei.ac.kr}
\author{Matteo Quattropani}
\address{University of Roma Tre}
\date{}
\email{matteo.quattropani@uniroma3.it}
\author{Federico Sau}
\address{University of Milan}
\date{}
\email{federico.sau@unimi.it}
\begin{document}
	\title[Spectral gap of  stochastic exchange  models]{Spectral gap of the KMP
	 and other 
		 stochastic \\
		 exchange models on arbitrary graphs}
	\maketitle
	\begin{abstract}
We present a simple strategy to derive universal bounds on the spectral gap of reversible stochastic exchange models on arbitrary graphs. The Kipnis-Marchioro-Presutti (KMP) model, the harmonic process (HP), and the immediate exchange model (IEM) are all examples that fall into this class. Our upper and lower bounds depend only on two features: worst-case linear statistics and a kinetic factor, which is, in essence, graph-independent. For the three aforementioned examples, these bounds are sharp, and even saturate to an identity for HP and IEM in some log-concave regimes. The proof---which yields bounds for eigenvalues even in the non-reversible context---crucially exploits the rigidity of the eigenstructure of these models and  quantitative contraction rates of the corresponding hidden parameter models recently introduced in \cite{de_masi_ferrari_gabrielli_hidden_2023,giardina_redig_tol_intertwining_2024}.	\end{abstract}
	\setcounter{tocdepth}{1}
%	\tableofcontents
	\thispagestyle{empty}
	
\section{Introduction. From  KMP to more general exchange models}\label{sec:intro}
\subsection{The KMP model}
%\subsubsection{Setting}
Consider a finite weighted	undirected graph $G=(V,(c_{xy})_{x,y\in V})$, some  positive parameters $\alpha=(\alpha_x)_{x\in V}$, and the following stochastic exchange dynamics involving non-negative
 masses $(\eta_x)_{x\in V}$ on $V$.
First, select  a pair of vertices $x,y\in V$ with rate $c_{xy}=c_{yx}\ge 0$. Then,  sample a random variable $U\sim {\rm Beta}(\alpha_x,\alpha_y)$. Finally, update  $(\eta_x,\eta_y)$ to 
 \begin{equation}\label{eq:KMP-update}
 	\tonde{U(\eta_x+\eta_y),(1-U)(\eta_x+\eta_y)}\comma
 \end{equation}
while leaving the other masses unchanged. This is known as the \emph{Kipnis-Marchioro-Presutti {\rm (KMP)} model} on the graph $G$ with parameters $\alpha$.

The original KMP model was introduced in \cite{kipnis_heat_1982} to describe heat conduction along a one-dimensional segment in contact with thermal reservoirs at its endpoints. In that setting, energy is exchanged only between neighboring vertices and with the reservoirs, and the model was defined with $U \sim \mathrm{U}(0,1)$ (i.e., $\alpha_x \equiv 1$). This formulation has since been generalized; see, e.g., \cite{giardina_duality_2009}.
On the same segment but without reservoirs, the model has been studied more recently under the name \emph{adjacent walk on the simplex} \cite{caputo_mixing_2019,labbe_petit_hydrodynamic_2025}. More broadly, this redistribution mechanism is also known as the \emph{beta-splitting process}, emphasizing the role of the $U$-distribution in determining how mass is split.
In the mean-field setting---where all rates $c_{xy}$ are equal and $\alpha_x \equiv 1/2$---the dynamics coincides with the evolution of particle energies in the \emph{Kac’s walk} \cite{kac_foundations_1956}, one of the most successful many-body mean-field models of stochastically colliding particles in kinetic theory.

In any of the above settings, the KMP model presents two  key qualitative features.
First, the dynamics described in \eqref{eq:KMP-update} is conservative, with the transition rates not depending on the total mass. Henceforth, without loss of generality, we shall always consider only unitary masses, i.e.,
\begin{equation}\label{eq:Omega}
	\eta\in \Omega\eqdef \set{\eta=(\eta_x)_{x\in V}: \eta_x\ge 0\ \text{for all}\ x\in V\,,\, |\eta|=1}\comma\quad \text{with}\ |\eta|=\sum_{x\in V}\eta_x\fstop
\end{equation} 
Second, the KMP model on $\Omega$ admits---as soon as the underlying graph is connected---a unique, explicit, reversible measure: the Dirichlet distribution with parameters $\alpha=(\alpha_x)_{x\in V}$, written as ${\rm Dir}(\alpha)$. Letting $\mu\in \cP(\Omega)$ denote this measure, this reads as
\begin{equation}\label{eq:mu-dirichlet}
	\mu(\dd \eta) = \frac1{{\rm B}(\alpha)}\bigg(\prod_{x\in V}\eta_x^{\alpha_x-1} \bigg)\dd \eta\comma\quad \text{with}\ {\rm B}(\alpha)\eqdef \frac1{\Gamma(|\alpha|)}\prod_{x\in V}\Gamma(\alpha_x)\comma
\end{equation}
%\begin{equation}\label{eq:mu-dirichlet2}
%	\mu(\dd \eta) \propto\bigg(\prod_{x\in V}\eta_x^{\alpha_x-1} \bigg)\dd \eta\comma
%\end{equation}
where $\dd \eta$ stands for the uniform measure on $\Omega$. 

\subsubsection{Convergence to equilibrium and spectral gap}
Our primary focus is to quantify relaxation to equilibrium for this model on arbitrary graphs.	 
Because of the aforementioned reversibility, we shall focus on the process' spectral gap, governing the $L^2(\mu)$-exponential convergence rate. For more background on convergence to equilibrium, spectral gaps, and other functional inequalities for Markov processes, we refer to the monographs \cite{bakry_gentil_ledoux_analysis_2014,levin2017markov}.

The spectral gap of the KMP model has been completely characterized in the specific settings described above. Indeed, around two decades ago, in the mean-field setup,  spectral gap estimates---and even the exact value---have been obtained in a series of influential works \cite{janvresse_spectral_2001,carlen_carvalho_loss_determination_2003,caputo_kac2008}, thereby solving Kac's long-standing conjecture  \cite{kac_foundations_1956}. More recently, \cite{caputo_mixing_2019} determined the precise value of the spectral gap  on the  segment, whenever $\alpha_x\equiv{\rm const.}\ge 1$; for general $\alpha$,  bounds were previously derived in \cite{grigo_mixing_2012}. For mixing-time results, see, e.g., \cite{randall_winkler_mixing_2005,caputo_mixing_2019,labbe_petit_hydrodynamic_2025} on the segment, \cite{smith_gibbs_2014,caputo_quattropani_sau_universal_2025} for the mean-field setup, and references therein.

All the above spectral gap results crucially exploit inherent symmetries of the underlying geometry. When moving away from such settings, establishing the precise value of the spectral gap becomes unfeasible; however, one can still resort to spectral gap comparisons, that reduce an infinite- to a finite-dimensional optimization problem. Analogous spectral-gap reductions have been observed within the context of interacting particle systems; see Section \ref{sec:aldous-spectral-gap-identity} below for more details.

\subsubsection{${\rm KMP}$'s main result}
By reversibility, the KMP infinitesimal generator, say $\cL$, is self-adjoint in $L^2(\mu)$. In turn, the spectral gap of $\cL$ admits the following well-known variational formulation:
\begin{equation}\label{eq:gap-variational}
	{\rm gap}(G,\alpha) = \inf\set{\cD(f): f \in L^2(\mu)\ \text{with}\ \|f-\mu f\|_{L^2(\mu)}=1}\comma
\end{equation}
where $\cD(f)=\scalar{f}{-\cL f}_{L^2(\mu)}$ denotes the Dirichlet form associated to $\cL$, and $\mu f$ is a shorthand notation for $\int f\, \dd \mu$. However, thanks to the specifics of the model, the variational problem in \eqref{eq:gap-variational} can be considerably simplified. Indeed, as we shall explain below (see Proposition \ref{pr:invariance-polynomials}), $\cL$ leaves the (finite-dimensional) space $\mathscr P_k$ of polynomials of degree $k$ in the variables $(\eta_x)_{x\in V}$ invariant.  This, combined with the aforementioned reversibility with respect to $\mu$ in \eqref{eq:mu-dirichlet}, ensures that ${\rm gap}(G,\alpha)$ can be recovered by optimizing over each $\mathscr P_k\subset L^2(\mu)$ separately:
\begin{equation}\label{eq:gap-gap-k}
{\rm gap}(G,\alpha)= \inf_{k\ge 1} {\rm gap}_k(G,\alpha)\comma
\end{equation}
where, for all integers $k\ge 1$,
\begin{equation}\label{eq:gap-gap-k-def}
		{\rm gap}_k(G,\alpha) = \inf\set{\cD(f): f \in \mathscr P_k\ \text{with}\ \|f-\mu f\|_{L^2(\mu)}=1}\fstop
\end{equation}

Our main finding here is that ${\rm gap}(G,\alpha)$ and ${\rm gap}_1(G,\alpha)$ are comparable on \textit{any} underlying geometry, up to a universal factor, essentially depending only on $\alpha$.
\begin{theorem}[KMP]\label{th:gap-beta}
	For all graphs $G=(V,(c_{xy})_{x,y\in V})$ and  weights $\alpha=(\alpha_x)_{x\in V}$, 
	\begin{equation}\label{eq:gap-beta}
		\gamma_{\rm KMP}(G,\alpha)\, {\rm gap}_1(G,\alpha)\le {\rm gap}(G,\alpha)\le {\rm gap}_1(G,\alpha)\comma
	\end{equation}
	where
	\begin{equation}
		\gamma_{\rm KMP}(G,\alpha)\eqdef \frac{\alpha_{2,\rm min}}{1+\alpha_{2,\rm min}}\tonde{1+\frac1{|\alpha|}}\comma
		\end{equation}
		with
		\begin{equation}\label{eq:alpha-2-min}
		 \alpha_{2,\rm min}=\alpha_{2,\rm min}(G,\alpha)\eqdef \min_{x,y\,:\, c_{xy}>0} \set{\alpha_x+\alpha_y}\fstop
	\end{equation}
\end{theorem}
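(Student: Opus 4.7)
The upper bound in \eqref{eq:gap-beta} is immediate: restricting the variational problem \eqref{eq:gap-variational} to the finite-dimensional subspace $\mathscr P_1 \subset L^2(\mu)$ yields ${\rm gap}(G,\alpha) \le {\rm gap}_1(G,\alpha)$.

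For the lower bound, I would use the decomposition \eqref{eq:gap-gap-k}: it suffices to prove
\begin{equation*}
{\rm gap}_k(G,\alpha) \ge \gamma_{\rm KMP}(G,\alpha)\, {\rm gap}_1(G,\alpha) \quad \text{for every } k \ge 2,
\end{equation*}
since $\gamma_{\rm KMP}(G,\alpha) \le 1$ in the regimes of interest makes the case $k = 1$ trivial. The strategy is to exploit the intertwining between $\cL\restr{\mathscr P_k}$ and the generator $\widetilde{\cL}_k$ of the hidden parameter $k$-particle dynamics introduced in \cite{de_masi_ferrari_gabrielli_hidden_2023,giardina_redig_tol_intertwining_2024}. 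Under this intertwining $\cL\, \Lambda_k = \Lambda_k\, \widetilde{\cL}_k$, where $\Lambda_k$ is an explicit polynomial operator, $\widetilde{\cL}_k$ splits into a \emph{spatial} random-walk-type part (whose single-particle version controls ${\rm gap}_1(G,\alpha)$) and an \emph{on-site Dirichlet-Polya reshuffling} part, whose contraction rate on any active edge $\{x,y\}$ is a concrete function of $\alpha_x+\alpha_y$ coming from the moment identities of the Beta distribution.

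Concretely, I would carry out three steps. First, invoke the invariance $\cL \mathscr P_k \subseteq \mathscr P_k$ (Proposition \ref{pr:invariance-polynomials}) together with the intertwining to transfer every non-trivial eigenvalue of $\cL\restr{\mathscr P_k}$ to an eigenvalue of $\widetilde{\cL}_k$ on the orthogonal complement of the lower-degree subspaces. Second, compute the worst-edge Dirichlet-Polya contraction rate from the Beta moments: its minimum over active edges is $\alpha_{2,\min}/(1+\alpha_{2,\min})$. Third, combine the two contributions, tracking the comparison between ${\rm Dir}(\alpha)$ and the marginal on $k$-tuples of its labeled lift---this is what produces the $(1+1/|\alpha|)$ correction---to recover the stated factor $\gamma_{\rm KMP}(G,\alpha)$.

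The main obstacle---and the technical heart of the argument---is the rigidity step: the intertwining must be exploited sharply enough to preserve eigenvalues, not merely quadratic forms, since otherwise the reduction from $\mathscr P_k$ to $\mathscr P_1$ would accumulate a loss depending on $k$ and degenerate as $k \to \infty$. This is exactly what the "rigidity of the eigenstructure" mentioned in the abstract provides, via the quantitative contraction estimates of \cite{de_masi_ferrari_gabrielli_hidden_2023,giardina_redig_tol_intertwining_2024}: an additive decomposition of the spectrum of $\widetilde{\cL}_k$ into a spatial contribution (bounded below by ${\rm gap}_1(G,\alpha)$) and a reshuffling contribution (bounded below by the worst-edge Beta rate), valid uniformly in $k$. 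Making this decomposition explicit and performing the Dirichlet-to-Polya bookkeeping is the remaining, essentially algebraic, work.
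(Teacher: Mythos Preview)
Your high-level intuition is right: the lower bound does go through an intertwining with the hidden parameter model, and the constants do ultimately come from Beta second moments. But the mechanism you describe is not the one that works, and the one you sketch has a genuine gap.

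You claim an ``additive decomposition of the spectrum of $\widetilde{\cL}_k$ into a spatial contribution and a reshuffling contribution''. No such decomposition exists (or is used): the bound is \emph{multiplicative}, $\lambda \ge \gamma_{\rm KMP}\cdot {\rm gap}_1$, and it is not obtained by splitting the generator into two commuting pieces. The paper's route is a Chen--Wang/Lyapunov argument. First, one transfers the eigenfunction through a chain of intertwinings ($\cL \to L \to L^\dagger \to \mathscr L$, with $L$ the $k$-particle system and $\mathscr L$ the hidden parameter generator), landing on a polynomial $g$ on $[0,1]^V$ with $\mathscr L g = -\lambda g$ and the pointwise control $|g(\theta)| \le C\,{\rm Var}_\pi(\theta)^{k/2}$. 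The control by a power of ${\rm Var}_\pi$---rather than merely by a constant---is the delicate step, and it requires passing through the \emph{adjoint} particle system and using the annihilation structure to force the transferred eigenfunction into $\ker(\mathfrak a^\dagger)$; this is what makes the bound uniform in $k$. Your proposal does not mention this step, and without it the ``rigidity'' you invoke is just a name.

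Second, the quantitative input is a single second-moment computation: one shows directly that $\mathscr L\,{\rm Var}_\pi(\theta) \le -\gamma_{\rm KMP}\,\cE_{\rm RW}(\theta)$ by expanding $\mathscr L\|\theta\|_{L^2(\pi)}^2$ and $\mathscr L(\pi\theta)^2$. The factor $\alpha_{2,\rm min}/(1+\alpha_{2,\rm min})$ arises from the first of these (it is $\chi_{xy}/\pi_{xy}$ in the paper's notation), and the $(1+1/|\alpha|)$ correction comes from the second, via the term $\sigma_{xy}$; it has nothing to do with a Dirichlet-to-Polya comparison of marginals. Gr\"onwall then gives exponential decay of $\mathscr S_t{\rm Var}_\pi$ at rate $\gamma_{\rm KMP}\,{\rm gap}_1$, and combining with the eigenfunction control yields $\lambda \ge \gamma_{\rm KMP}\,{\rm gap}_1$. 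So the ``essentially algebraic work'' you leave for the end is in fact the entire proof, and it does not take the shape you anticipate.
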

Given the relation in \eqref{eq:gap-gap-k} (which we will later prove in greater generality), the upper bound in \eqref{eq:gap-beta} is clearly not surprising; it is the lower bound that actually constitutes the main result of the theorem. However, taken together, they offer a clearer picture of the sharpness of our bounds. Indeed, one always has 
$\gamma_{\rm KMP}(G,\alpha) \in (0,1)$, as well as
	\begin{equation}\label{eq:gamma-beta-bounds}
	\frac{2\alpha_{\rm min}}{1+2\alpha_{\rm min}
	}\tonde{1+\frac1{|\alpha|}}	\le 	\gamma_{\rm KMP}(G,\alpha)\le \frac{2\alpha_{\rm max}}{1+2\alpha_{\rm max}}\tonde{1+\frac1{|\alpha|}}\comma
\end{equation}
where $\alpha_{\rm min}\eqdef\min_{x\in V}\alpha_x$ and $\alpha_{\rm max}\eqdef\max_{x\in V}\alpha_x$. Observe that both bounds in \eqref{eq:gamma-beta-bounds} do not depend on $G$. As a consequence of \eqref{eq:gamma-beta-bounds}, $\gamma_{\rm KMP}(G,\alpha)\approx 1$ (resp.\ $\frac{2}n$)  as $\alpha_{\rm min}\to \infty$ (resp.\ $\alpha_{\rm max}\to 0$ and $\frac{\alpha_{\rm min}}{\alpha_{\rm max}}\approx 1$). Finally, remark that $\gamma_{\rm KMP}(G,\alpha)$ can be much better than its lower bound in \eqref{eq:gamma-beta-bounds}, whenever only a few isolated vertices $x\in V$ register $\alpha_x\to 0$.	

Remark that working with ${\rm gap}_1(G,\alpha)$ instead of ${\rm gap}(G,\alpha)$ offers a significant advantage: as we will see, ${\rm gap}_1(G,\alpha)$ admits a natural interpretation as the spectral gap of a random walk on a weighted graph. For such an object, a range of efficient estimation techniques from the theory of Markov chains and graph theory are available, and in some cases, even explicit formulas can be derived. This is precisely the case when $\alpha_x \equiv \text{const.} = \hat{\alpha} > 0$ and $G$ is either the complete graph or the segment. Comparing these explicit formulas with the KMP's spectral gap identities  in \cite{carlen_carvalho_loss_determination_2003,caputo_mixing_2019} allow us to assess the sharpness of Theorem \ref{th:gap-beta} (Section \ref{sec:KMP-sharpness}): in the complete-graph case, the lower bound in \eqref{eq:gap-beta} becomes an identity; on the segment, the upper bound is saturated whenever $\hat \alpha\ge 1$. 

\subsection{Proof strategy and hidden parameter model}\label{sec:intro-proof}
As already mentioned, prior to our work, spectral gap lower bounds for the KMP model have been established on the complete graph and on the segment.
In particular, the first setting---corresponding to the mean-field scenario, much explored in kinetic theory---is now fully covered \cite{janvresse_spectral_2001,carlen_carvalho_loss_determination_2003,caputo_kac2008}. In a nutshell, the common ingredient of all these proofs is a recursive argument on $n$, the size of the graph, at the level of Dirichlet forms. This strategy, originated from Lu-Yau's martingale method \cite{lu_yau_spectral_1993} and which proved to be extremely efficient and flexible to generalizations in the mean-field setup \cite{carlen_spectral_2014,sasada_spectral_2015,carlen_posta_toth_spectral_2025}, does not  admit a sharp counterpart on \textit{arbitrary} underlying geometries, especially in absence of symmetries.

Instead, we follow an alternative, equally established, route due to Chen and Wang \cite{chen1997estimation} to bound from below eigenvalues of Markov generators, thus, spectral gaps of reversible processes.  In essence, their proof goes as follows. Let $\lambda>0$ be an eigenvalue of $-\cL$ associated to the eigenfunction  $f$ satisfying $\mu f=0$, and $(\cS_t)_{t\ge 0}$ its corresponding  semigroup. Then,   one has,	for any $\eta\in \Omega$ and $t> 0$, 
\begin{equation}
	\ttabs{e^{-\lambda t}f(\eta)}=\tabs{\cS_t f(\eta)}
	%=\tabs{\cS_t f(\eta)-\mu f}
	=\ttabs{\cS_t f(\eta)-\mu \cS_t f}\le {\int_\Omega \mu(\dd \eta') \ttabs{\cS_t f(\eta)-\cS_t f(\eta')}}\comma
\end{equation}
and the task is to bound the right-hand side  by $Ce^{-\varrho t}$, for all $t>0$ large enough and some constants $C,\varrho>0$ not depending on $t>0$. Provided that $f(\eta)\neq 0$, taking the logarithm on both sides and sending $t\to \infty$ would yield the lower bound $\lambda \ge \varrho$. 

In fact, the crucial estimate
\begin{equation}\label{eq:chen-wang}
	\int_\Omega\mu(\dd \eta')\ttabs{\cS_t f(\eta)-\cS_tf(\eta')}\le Ce^{-\varrho t}
\end{equation} is derived in two main steps, see, e.g., \cite[Theorem 2.42]{chen_eigenvalues_2005}: 
\begin{enumerate}[(i)]
	\item Find a bounded nonnegative function $\mathfrak d$ on $\Omega\times \Omega$ controlling $\ttabs{f(\emparg)-f(\emparg)}$, that is, 
	\begin{equation}
		\sup_{\eta,\eta'\in\Omega}\frac{\ttabs{f(\eta)-f(\eta')}}{\mathfrak d(\eta,\eta')}\le C_1\comma\quad \text{for some}\ C_1>0\fstop
		\end{equation}
		Clearly, this forces $\mathfrak d(\eta,\eta)=0$ for all $\eta\in \Omega$.
		\item Find a Markovian coupling $\tilde \cS_t$ of two copies of $\cS_t$ under which $\mathfrak d$ decays exponentially fast, i.e., there exists $\varrho>0$ satisfying, for all $t>0$ and $\eta,\eta'\in \Omega$,
		\begin{equation}\label{eq:exp-contraction-general}
			\tilde \cS_t \mathfrak d(\eta,\eta')\le e^{-\varrho t}\,\mathfrak d(\eta,\eta')\fstop
		\end{equation}
		In particular, when $\mathfrak d$ is the $p$-th power of a distance on $\Omega$, \eqref{eq:exp-contraction-general} gives a $p$-Wasserstein (with respect to $\mathfrak d^{1/p}$) bound.
\end{enumerate}
These two estimates yield \eqref{eq:chen-wang} with $C=C_1 \sup_{\eta,\eta'\in\Omega}\mathfrak d(\eta,\eta')>0$. The challenge is, thus, to devise such a (i) control function  and (ii) coupling.

\subsubsection{Chen-Wang's approach for ${\rm KMP}$}
Within the context of the KMP model, this program has been successfully carried out, for $\alpha\equiv {\rm const.}$, on the complete graph \cite{hauray_uniform_2016} and on the segment \cite{grigo_mixing_2012}. 
Both works employ  the so-called \textit{grand-coupling}: let  the two configurations evolve as in \eqref{eq:KMP-update} by simultaneously  updating the same edge and sharing the same $U$. 
As for  $\mathfrak d$, \cite{hauray_uniform_2016} considers the squared Euclidean distance on $\Omega$: 
\begin{equation}\label{eq:euclidean-distance}
	\mathfrak d(\eta,\eta')=d(\eta,\eta')^2\eqdef\sum_{x\in V}\tonde{\eta_x-\eta_x'}^2\fstop
	\end{equation}
It is instructive to check what  \cite{hauray_uniform_2016} yields on a general graph $G=(V,(c_{xy})_{x,y\in V})$: letting $\tilde \cL$ denote the generator of the semigroup $(\tilde \cS_t)_{t\ge 0}$, by following the steps in the proof of \cite[Theorem 1]{hauray_uniform_2016}, one gets (recall that here we assume $\alpha\equiv \hat \alpha>0$)
\begin{align}\label{eq:hauray}
	\begin{aligned}
	\tilde \cL\, d(\eta,\eta')^2 &=-\frac{\hat\alpha}{4\hat\alpha+2}\sum_{\substack{x,y\in V\\
	x\neq y}}c_{xy}\set{\tonde{\eta_x-\eta_x'}-\tonde{\eta_y-\eta_y'}}^2\\
&\quad + \frac{1}{2\hat\alpha+1}\sum_{\substack{x,y\in V\\
x\neq y}}c_{xy}\tonde{\eta_x-\eta_x'}\tonde{\eta_y-\eta_y'}\fstop
\end{aligned}
\end{align}
On the complete graph, that is, when $c_{xy}\equiv \frac1n>0$, both summations on the right-hand side can be reduced to a negative constant---depending on $|V|=n$ and $\hat \alpha$---times $d(\eta,\eta')^2$. Here, one crucially uses that $\sum_{y\neq x}c_{xy}\tonde{\eta_y-\eta_y'}=-\frac1n\tonde{\eta_x-\eta_x'}$.  Recalling that $\tilde \cL \tilde \cS_t = \frac{\dd}{\dd t}\tilde \cS_t$, integrating this infinitesimal identity yields the desired exponential contraction as in \eqref{eq:exp-contraction-general}. Interestingly, the spectral gap lower bound obtained in this way matches the exact value \cite{carlen_carvalho_loss_determination_2003,caputo_kac2008}.

When $G$ is not the complete graph, this simplification breaks down. In particular, while the first expression  is nonpositive and eventually comparable to $d(\eta,\eta')^2$, the second line in \eqref{eq:hauray} does not have, in general, a prescribed sign. This represents the main obstacle in making this strategy work on arbitrary graphs, as different couplings and control functions need to be constructed. 

On the segment, for instance,  \cite{grigo_mixing_2012}  overcomes this issue by  replacing the squared  distance of $\eta, \eta'\in \Omega$ in \eqref{eq:euclidean-distance} with that of their height functions, namely, 
\begin{equation}
	\mathfrak d(\eta,\eta')=\sum_{x\in V}\bigg(\sum_{y\le x}\eta_y-\sum_{y\le x}\eta_y'\bigg)^2\fstop
\end{equation} 
This choice is clearly effective only on one-dimensional graphs.
For more details, we refer the interested reader to \cite[Proposition 2.6]{grigo_mixing_2012}, as well as to \cite{caputo_mixing_2019,labbe_petit_hydrodynamic_2025}, which determine the spectral gap for some choices of $\alpha$, with $\alpha_{\rm min}\ge 1$.

\subsubsection{${\rm KMP}$'s hidden parameter model} Our way to tackle this difficulty on arbitrary graphs is to apply the same strategy, not to the KMP process itself, but to a recently discovered dual process \cite{de_masi_ferrari_gabrielli_hidden_2023} (see also \cite{giardina_redig_tol_intertwining_2024}): the KMP hidden parameter model.
Before discussing the details, we begin with a few preliminary remarks.

Recall that $\cL$ leaves $\mathscr P_k$,  the space of polynomials of degree $k$ in the $\eta$-variables, invariant. More specifically, let $f\in \mathscr P_k$, and represent it as
%\footnote{In general, we have $\psi(x_1,\ldots, x_k)=\psi_k(x_1,\ldots, x_k)+\frac1k \sum_{i=1}^k \psi_{k-1}(x_1,\ldots, x_{i-1},x_{i+1},\ldots, x_k)+\ldots$; hence, all lower order terms factors are already included in the representation in \eqref{eq:f-polynomial}.}
\begin{equation}\label{eq:f-polynomial}
	f(\eta)=\sum_{x_1,\ldots,x_k\in V} \psi(x_1,\ldots, x_k)\,\eta_{x_1}\cdots \eta_{x_k}\comma\qquad \eta \in \Omega\comma
\end{equation} 
for some symmetric function $\psi:V^k\to \R$. Since $\cL$ is linear and $\eta$ vary in $\Omega$, the action of $\cL$ on $f=f(\eta)$ translates into one on the coefficients $\psi=\psi(x_1,\ldots,x_k)$, that is, there exists a linear operator $L$ on $\R^{V^k}$ satisfying, for all $\eta\in \Omega$,
\begin{equation}
	\cL f(\eta)=\sum_{x_1,\ldots,x_k\in V}\psi(x_1,\ldots, x_k)\, \cL\,{\eta_{x_1}\cdots \eta_{x_k}}= \sum_{x_1,\ldots, x_k\in V} L\psi(x_1,\ldots, x_k)\, \eta_{x_1}\cdots \eta_{x_k}\fstop
\end{equation}
As we shall see in Section \ref{sec:particle-system}, and as is well-known, $L$ has a probabilistic interpretation in terms of a particle dynamics, and the identity above is an instance of an intertwining relation. What matters for us now is that, since any eigenfunction of $-\cL$ must  be a polynomial, this implies that, if $f$ in \eqref{eq:f-polynomial} solves $\cL f=-\lambda f$, then so do its coefficients, i.e., 
\begin{equation}\label{eq:psi-eigenfunction}
	L\psi=-\lambda \psi\fstop
\end{equation}
Exploiting this equivalent reformulation of the eigenvalue equation for KMP seems as hard as the one we started from. Instead, we combine the eigenfunction $\psi$ in \eqref{eq:psi-eigenfunction} with different variables, as follows
\begin{equation}
	g(\theta)\eqdef \sum_{x_1,\ldots,x_k\in V}\alpha_{x_1\cdots x_k}\, \psi(x_1,\ldots, x_k)\, \theta_{x_1}\cdots \theta_{x_k}\comma\qquad \theta \in [0,1]^V\comma
\end{equation}
where $\alpha_{x_1\cdots x_k}\eqdef \alpha_{x_1}\tonde{\alpha_{x_2}+\car_{x_2}(x_1)}\cdots \tttonde{\alpha_{x_k}+\sum_{j<k}\car_{x_k}(x_j)}$. This seemingly marginal change turns out to be surprisingly effective.

The action of the linear operator $L$ on $\psi$ translates into a new one on the $\theta$-variables, which we call $\mathscr L$: for all $\theta\in [0,1]^V$, 
\begin{align}
\sum_{x_1,\ldots, x_k} \alpha_{x_1\cdots x_k}\, L\psi(x_1,\ldots, x_k)\, \theta_{x_1}\cdots \theta_{x_k} 
%&= %\sum_{x_1,\ldots, x_k} \alpha_{x_1,\ldots, x_k}\, \psi(x_1,\ldots, x_k)\, \mathscr L\,{\theta_{x_1}\cdots \theta_{x_k}} \\
&= \mathscr Lg(\theta)\fstop
\end{align}
Hence, if $\psi$ solves \eqref{eq:psi-eigenfunction}, then $\mathscr L g=-\lambda g$. Altogether, we obtained
\begin{equation}
	\cL f=-\lambda f\qquad \Rightarrow\qquad L \psi=-\lambda \psi\qquad \Rightarrow\qquad \mathscr L g=-\lambda g\fstop
\end{equation}

The operator $\mathscr L$ just introduced turns out to be the infinitesimal generator of the hidden parameter model  of the KMP. Therefore, letting  $(\mathscr S_t)_{t\ge 0}$ be its Markov semigroup, by the same reasoning as above, we obtain a lower bound for $\lambda$ if we prove
\begin{equation}\label{eq:ub-HPM}
\ttabs{e^{-\lambda t}g(\theta)}=	\ttabs{\mathscr S_t g(\theta)} \le Ce^{-\varrho t}\comma
\end{equation}
 for all $t>0$ large enough, and for some $C, \varrho>0$ and $\theta\in [0,1]^V$ satisfying $g(\theta)\neq0$.

For the proof of the upper bound in \eqref{eq:ub-HPM}, we exploit the probabilistic interpretation of $\mathscr L$ and $\mathscr S_t$.
On a given graph,  KMP and its hidden parameter model $\theta$  evolve according to the same stochastic mechanisms: edges are selected at the same rate, and an analogous beta-distributed random variable 
$U$ is sampled. However, the resulting $\theta$-configuration is not obtained from the update rule in \eqref{eq:KMP-update}; instead, it transitions from $(\theta_x,\theta_y)$ to
\begin{equation}\label{eq:HPM-update}
	(U\theta_x+(1-U)\theta_y,U\theta_x+(1-U)\theta_y)\fstop
\end{equation}
Remark that \eqref{eq:HPM-update} describes a dynamics that:
\begin{itemize}
	\item does not, in general, conserve the total mass $|\theta|=\sum_{x\in V}\theta_x$;
	\item satisfies the maximum principle: if $
	%0\le \min_{x\in V}\theta_x\le
	\max_{x\in V}\theta_x\le 1$ holds at time $t=0$, then this holds true almost surely at any later time;
	\item leaves constant configurations invariant; moreover, if $G$ is connected, these are the only invariant ones. 
\end{itemize}
This last feature plays a crucial role in our analysis. While having equilibrium states supported only on constant configurations may appear to introduce a degeneracy, it is in fact  beneficial: convergence to equilibrium for the hidden parameter model  requires no coupling, but only a control over spatial fluctuations. 
The  decay of these fluctuations is quantified by the following $L^2$-estimate (here, $\pi=(\pi_x)_{x\in V}$ with $\pi_x=\frac{\alpha_x}{|\alpha|}$): for all $t\ge 0$,
\begin{equation}\label{eq:ub-HPM2}
	(\mathscr S_t{\rm Var}_\pi)(\theta)\le e^{-\varrho t} \, {\rm Var}_\pi(\theta)\comma
\end{equation}
with $\theta\mapsto {\rm Var}_\pi(\theta)=\|\theta-\pi\theta\|_{L^2(\pi)}^2$ being the suitable control function for $k$-degree eigenfunctions, $k\ge 2$	(Section \ref{sec:proof-control}). Ultimately, establishing estimates as in \eqref{eq:ub-HPM2} boils down to a straightforward second moment computation (Section \ref{sec:proof-L2}). Remarkably, these estimates are broadly applicable: they are graph- and coupling-independent, and yield tight bounds across a wide class of processes.

\subsection{Generalizations: HP \&  IEM, and beyond reversibility}

The strategy  outlined for bounding KMP's spectral gap turns out to be much more general, yet, surprisingly sharp. Indeed, it covers all	conservative stochastic exchange models that are reversible,  and that come with a \textquotedblleft degenerate\textquotedblright\ hidden parameter model. In certain instances, the spectral gap estimates derived through this approach actually become exact identities, on any graph.
	
Stochastic exchange models which are conservative and admit a hidden parameter model take a rather general form, see \eqref{eq:gen-general}: transitions correspond to  stochastic mappings, with the corresponding rates being independent of the process' state. 	
In contrast, reversibility imposes a specific form on the transition rates. 

\subsubsection{Harmonic process \& immediate exchange model} 

We mainly consider two notable examples in this class: the harmonic process (HP) \cite{frassek2021exact,franceschini_frassek_giardina_integrable_2023} and the immediate exchange model (IEM) \cite{katriel_immediate_2015,ginkel_redig_sau_duality_2016}. A rough description of their dynamics on $(G,\alpha)$ goes   as follows (see Sections \ref{sec:general} and \ref{sec:examples} for a complete description): 
 HP's transitions go from $(\eta_x,\eta_y)$ to
\begin{equation}
	(U\eta_x,(1-U)\eta_x+\eta_y)\comma
\end{equation}
where the random $U$ takes the value $u\in (0,1)$ with an \textquotedblleft infinitesimal rate\textquotedblright, namely, $c_{xy}\tonde{1-u}^{-1}u^{\alpha_x-1}\,\dd u$;	 
IEM's updates result in replacing, at rate $c_{xy}\ge 0$, $(\eta_x,\eta_y)$ with
\begin{equation}\label{eq:update-IEM}
	(U\eta_x+(1-V)\eta_y,(1-U)\eta_x+V\eta_y)\comma
\end{equation}
where $U, V\in (0,1)$ are independent and distributed as
	\begin{equation}U\sim {\rm Beta}(\alpha_x-\kappa,\kappa)\comma\qquad V\sim{\rm Beta}(\alpha_y-\kappa,\kappa)\fstop\end{equation}
	Here,  $\kappa \in (0,\alpha_{\rm min})$ is an additional fixed parameter of the model.

 HP and IEM fall into our general class of stochastic exchange models. Furthermore, because both models admit $\mu \sim {\rm Dir}(\alpha)$ as a reversible measure, the spectral gap decomposition in \eqref{eq:gap-variational}--\eqref{eq:gap-gap-k-def} holds analogously for both models. 
In the following theorems, we specialize the main results from Section \ref{sec:results} to the context of HP and IEM. Observe that, although we simply write ${\rm gap}(G,\alpha)$ and ${\rm gap}_1(G,\alpha)$ as in Theorem \ref{th:gap-beta}, these quantities depend, in general, on the model considered. All throughout, $a\wedge b\eqdef \min\ttset{a,b} $, $a,b\in \R$.

\begin{theorem}[HP]\label{th:gap-HP} For all graphs $G=(V,(c_{xy})_{x,y\in V})$ and weights $\alpha=(\alpha_x)_{x\in V}$, 
	\begin{equation}\label{eq:HP-gap-result}
		\tonde{1\wedge \gamma_{\rm HP}(G,\alpha)}{\rm gap}_1(G,\alpha)\le {\rm gap}(G,\alpha)\le {\rm gap}_1(G,\alpha)\comma
	\end{equation}
	where
	\begin{equation}
		\gamma_{\rm HP}(G,\alpha)\eqdef \min_{x,y\,:\,c_{xy}>0} \set{\frac{\alpha_x}{\alpha_x+1}+\frac{\alpha_y}{\alpha_y+1}}\tonde{1+\frac1{|\alpha|}}\fstop
	\end{equation}
\end{theorem}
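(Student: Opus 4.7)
The upper bound ${\rm gap}(G,\alpha)\le {\rm gap}_1(G,\alpha)$ is immediate from the polynomial decomposition \eqref{eq:gap-gap-k}, which extends verbatim from KMP to HP: both share invariance of $\mathscr P_k$ under the generator and reversibility with respect to $\mu=\mathrm{Dir}(\alpha)$. The content of the theorem lies in the matching lower bound.

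For this, I would follow the scheme of Section~\ref{sec:intro-proof} with the beta-splitting hidden parameter model replaced by HP's. Let $\lambda>0$ be an eigenvalue of $-\cL$ with eigenfunction $f\in\mathscr P_k$ and coefficients $\psi$; the $k$-particle intertwining forces $L\psi=-\lambda\psi$, and the rescaled polynomial
\begin{equation}
g(\theta)\eqdef\sum_{x_1,\ldots,x_k\in V}\alpha_{x_1\cdots x_k}\,\psi(x_1,\ldots,x_k)\,\theta_{x_1}\cdots\theta_{x_k},\qquad\theta\in[0,1]^V,
\end{equation}
is then an eigenfunction of the HP hidden parameter generator $\mathscr L$ with the same eigenvalue. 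The $k=1$ sector yields $\lambda\ge{\rm gap}_1$ directly. For $k\ge 2$, using $\mathrm{Var}_\pi$ as control function (justified in Section~\ref{sec:proof-control}) and Chen--Wang's argument via $|e^{-\lambda t}g(\theta)|=|\mathscr S_t g(\theta)|$, it suffices to establish the $L^2(\pi)$-contraction $\mathscr S_t\mathrm{Var}_\pi\le e^{-\varrho t}\mathrm{Var}_\pi$ with rate $\varrho=\gamma_{\rm HP}(G,\alpha){\rm gap}_1(G,\alpha)$.

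This contraction is obtained from a direct second-moment computation of $\mathscr L\,\mathrm{Var}_\pi(\theta)$ under HP's update law. On an oriented edge $(x,y)$, the kernel $c_{xy}(1-u)^{-1}u^{\alpha_x-1}\dd u$ yields the moment $\bbE[1-U]=1/(\alpha_x+1)$; summing the contributions of the two orientations, each unoriented edge $\ttset{x,y}$ contributes
\[
c_{xy}\tonde{\frac{\alpha_x}{\alpha_x+1}+\frac{\alpha_y}{\alpha_y+1}}(\theta_x-\theta_y)^2
\]
times the graph-independent prefactor $(1+1/|\alpha|)$ that converts between the $\pi$-weighted variance and the $\alpha$-weighted Dirichlet form defining ${\rm gap}_1$. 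Bounding this edge-sum from below by $\gamma_{\rm HP}(G,\alpha)$ times the Dirichlet form of ${\rm gap}_1$ (the latter being the spectral gap of an explicit reversible random walk on $G$) yields the pointwise estimate $\mathscr L\,\mathrm{Var}_\pi(\theta)\le -\gamma_{\rm HP}\,{\rm gap}_1\,\mathrm{Var}_\pi(\theta)$, and combining with the trivial $k=1$ bound delivers $\lambda\ge(1\wedge\gamma_{\rm HP}){\rm gap}_1$.

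The main obstacle is the asymmetry of HP's edge update in its two endpoints: the beta-type kernel $c_{xy}(1-u)^{-1}u^{\alpha_x-1}\dd u$ is orientation-dependent, and one must track the contributions of $(x,y)$ and $(y,x)$ separately before verifying that they symmetrize into the clean factor $\alpha_x/(\alpha_x+1)+\alpha_y/(\alpha_y+1)$ appearing in $\gamma_{\rm HP}$. This is precisely what distinguishes the HP bound from the symmetric KMP factor $\alpha_{2,\min}/(1+\alpha_{2,\min})$ and, as the abstract remarks, enables $\gamma_{\rm HP}$ to saturate to an identity in log-concave regimes.
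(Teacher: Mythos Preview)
Your approach is correct and matches the paper's: specialize the general machinery of Theorems~\ref{th:main}--\ref{th:main-rev} and Lemmas~\ref{lem:eigenfunction-control}--\ref{lem:L2-contraction} to HP by computing $r_{xy}$, $\pi_{xy}$, $\chi_{xy}$, $\sigma_{xy}$ for the HP kernel and reading off $\gamma(\cL)=\gamma_{\rm HP}(G,\alpha)$.

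Two inaccuracies in your sketch would need to be cleaned up, though neither affects the strategy. First, the HP kernel $c_{xy}\,u^{\alpha_x-1}(1-u)^{-1}\dd u\otimes\delta_1(\dd v)$ has infinite total mass, so the expression ``$\bbE[1-U]=1/(\alpha_x+1)$'' is meaningless; the factor $\alpha_x/(\alpha_x+1)$ actually enters via $\pi_x\int u(1-u)\,\beta_{xy}(\dd u,\dd v)=\tfrac{\alpha_x}{|\alpha|}\cdot\tfrac{c_{xy}}{\alpha_x+1}$ inside $\chi_{xy}$. Second, the prefactor $(1+1/|\alpha|)$ is not a conversion between the $\pi$-variance and the Dirichlet form; it arises because $\mathscr L\,{\rm Var}_\pi(\theta)=\mathscr L\|\theta\|^2-\mathscr L(\pi\theta)^2$ and, since the hidden parameter model does \emph{not} conserve $\pi\theta$, the second piece contributes the additional $\sigma_{xy}=\chi_{xy}/|\alpha|$ term (see the proof of Lemma~\ref{lem:L2-contraction} and the HP computation in Section~\ref{sec:examples}).
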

\begin{theorem}[IEM]\label{th:gap-IEM} For all  $G=(V,(c_{xy})_{x,y\in V})$,  $\alpha=(\alpha_x)_{x\in V}$, and $\kappa\in (0,\alpha_{\rm min})$, 
	\begin{equation}\label{eq:IEM-gap-result}
		\tonde{1\wedge \gamma_{\rm IEM}(G,\alpha,\kappa)}{\rm gap}_1(G,\alpha,\kappa)\le {\rm gap}(G,\alpha,\kappa)\le {\rm gap}_1(G,\alpha,\kappa)\comma
	\end{equation}
	where
	\begin{equation}
		\gamma_{\rm IEM}(G,\alpha,\kappa)\eqdef \min_{x,y\,:\,c_{xy}>0} \set{\frac{\alpha_x-\kappa}{\alpha_x+1}+\frac{\alpha_y-\kappa}{\alpha_y+1}}\tonde{1+\frac1{|\alpha|}}\fstop
	\end{equation}
\end{theorem}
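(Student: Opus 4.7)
The proof follows the intertwining-plus-Chen-Wang blueprint of Section~\ref{sec:intro-proof}, specialized to IEM and mirroring the proof of Theorem~\ref{th:gap-HP}. The upper bound ${\rm gap}(G,\alpha,\kappa) \le {\rm gap}_1(G,\alpha,\kappa)$ is immediate from the $k$-graded decomposition ${\rm gap} = \inf_{k\ge 1} {\rm gap}_k$---the analogue of \eqref{eq:gap-gap-k} for the broader class of reversible stochastic exchange models, to be established in Section~\ref{sec:general}---restricted to $k=1$.

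For the lower bound, let $\lambda>0$ be an eigenvalue of $-\cL^{\rm IEM}$ with eigenfunction $f\in \mathscr P_k$, $\mu f=0$. For $k=1$, one has $\lambda\ge {\rm gap}_1\ge (1\wedge \gamma_{\rm IEM}){\rm gap}_1$ trivially. For $k\ge 2$, the goal is to prove $\lambda\ge \gamma_{\rm IEM}(G,\alpha,\kappa)\cdot{\rm gap}_1(G,\alpha,\kappa)$. Writing $f(\eta)=\sum\psi(x_1,\ldots,x_k)\,\eta_{x_1}\cdots \eta_{x_k}$, the intertwining of Section~\ref{sec:intro-proof} produces a corresponding eigenfunction
\begin{equation}
g(\theta) = \sum_{x_1,\ldots,x_k}\alpha_{x_1\cdots x_k}\,\psi(x_1,\ldots,x_k)\,\theta_{x_1}\cdots \theta_{x_k}
\end{equation}
of $\mathscr L^{\rm IEM}$ with the same eigenvalue $\lambda$. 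Since the only equilibria of the hidden parameter model are constants, the Chen-Wang argument (as in \eqref{eq:ub-HPM}--\eqref{eq:ub-HPM2}) reduces the lower bound on $\lambda$ to the $L^2$-contraction
\begin{equation}
\mathscr L^{\rm IEM}\,{\rm Var}_\pi(\theta)\le -\varrho\,{\rm Var}_\pi(\theta),\qquad \theta\in [0,1]^V,
\end{equation}
with $\varrho = \gamma_{\rm IEM}(G,\alpha,\kappa)\cdot {\rm gap}_1(G,\alpha,\kappa)$: iterating via $\mathscr S^{\rm IEM}_t$, taking logarithms, and sending $t\to\infty$ along any $\theta$ with $g(\theta)\ne 0$ then forces $\lambda\ge \varrho$.

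The only IEM-specific step is this $L^2$-contraction. Since ${\rm Var}_\pi(\theta)=\sum_x\pi_x\theta_x^2-\tonde{\sum_x \pi_x\theta_x}^2$ is quadratic, a direct second-moment computation reduces it to evaluating the Beta moments $\E[U^2]$, $\E[(1-V)^2]$ and the cross-moment $\E[U(1-V)]$, with $U\sim{\rm Beta}(\alpha_x-\kappa,\kappa)$ and $V\sim{\rm Beta}(\alpha_y-\kappa,\kappa)$ independent. A careful simplification should yield an edge-wise Dirichlet-form identity
\begin{equation}
\mathscr L^{\rm IEM}\,{\rm Var}_\pi(\theta) = -\sum_{\{x,y\}}c_{xy}\,A_{xy}\,(\theta_x-\theta_y)^2,
\end{equation}
with $A_{xy}$ rational in $(\alpha_x,\alpha_y,\kappa,|\alpha|)$. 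Factoring the edge weight of the ${\rm gap}_1$ random walk out of $A_{xy}$ should leave the prefactor $\tonde{\frac{\alpha_x-\kappa}{\alpha_x+1}+\frac{\alpha_y-\kappa}{\alpha_y+1}}\tonde{1+\frac{1}{|\alpha|}}$; taking the edge-wise minimum and invoking the variational characterization of ${\rm gap}_1$ then delivers $\varrho\ge \gamma_{\rm IEM}\cdot {\rm gap}_1$.

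The main obstacle is the algebra in this edge computation: the IEM hidden parameter update involves \emph{two} independent Beta variables (one per endpoint), in contrast with the single $U$ governing KMP and HP. The additional cross terms must cancel or combine cleanly into a negative-definite Dirichlet form with the exact prefactor above; the structural parallel with Theorem~\ref{th:gap-HP}---where the prefactor differs only by the substitution $\alpha\mapsto\alpha-\kappa$ in the numerators---suggests that the cancellation is indeed robust, and one can expect the IEM computation to essentially reuse the HP calculation with this substitution.
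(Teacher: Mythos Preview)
Your proposal is correct and follows essentially the same route as the paper: Theorem~\ref{th:gap-IEM} is an instance of the general reversible result (Theorem~\ref{th:main-rev}), and the only model-specific work is evaluating $r_{xy}$, $\chi_{xy}$, $\sigma_{xy}$ for the IEM kernel to obtain $\gamma(\cL)=\gamma_{\rm IEM}$ (Section~\ref{sec:examples}). The two-Beta cross terms you flag as the main obstacle are absorbed once and for all into the general identity of Lemma~\ref{lem:L2-contraction}, so no IEM-specific cancellation is needed beyond computing a few Beta integrals.
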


Strikingly, if $\alpha_{\rm min}\ge 1$ (resp.\ $\alpha_{\rm min}\ge 1+2\kappa$), then the bounds in \eqref{eq:HP-gap-result} (resp.\ \eqref{eq:IEM-gap-result}) are actually identities, that is, ${\rm gap}(G,\alpha)={\rm gap}_1(G,\alpha)$.
 Whenever this identity does not hold, we prove that the lower bounds are attained on specific graphs; see Sections \ref{sec:HP-sharpness} and \ref{sec:IEM-sharpness}. Observe that the regime $\alpha_{\rm min}\ge 1$ corresponds to log-concavity of $\mu\sim {\rm Dir}(\alpha)$. 
 
Finally, let us note that the homogeneous HP model on the segment is known to be integrable \cite{frassek2021exact}. Consequently, all eigenvalues of the HP model in this specific setting are, at least in principle, explicitly computable. However,  no results on the spectral gap of either of these two models were available, in any geometry, prior to the inequalities and identities stated in Theorems \ref{th:gap-HP} and \ref{th:gap-IEM}.

\subsubsection{Beyond non-degeneracy and reversibility}

All three models discussed earlier share the key feature of admitting a non-degenerate reversible  measure. These two properties are instrumental in applying the spectral theorem to self-adjoint generators, which in turn guarantees that the polynomial eigenvalues we compute actually saturate the corresponding $L^2$-spectrum. However, one of the main strengths of our approach is that neither non-degeneracy nor reversibility is essential for obtaining sharp eigenvalue estimates. Indeed, we identify a broad family of stochastic exchange models—with update rules as in \eqref{eq:update-IEM}, see \eqref{eq:gen-general}—for which our method remains effective even in the absence of these assumptions, thus significantly extending the range of models to which our spectral analysis applies.

To begin with, our strategy remains effective in degenerate settings, where the invariant measure is singular, that is, $\mu=\delta_\omega$, for some $\omega\in \Omega$. This occurs, for example, in certain averaging-type models in which one imposes the requirement \begin{equation}\label{eq:averaging}
	V=1-\frac{\omega_x}{\omega_y}\tonde{1-U}
\end{equation} at each update in \eqref{eq:update-IEM}. In such cases, the system  converges to the  configuration $\omega\in \Omega$; when $\omega\equiv \frac1n$, the above requirement becomes $U\equiv V$.

In these settings, no meaningful $L^2$-space compatible with $\cL$ can be defined; nevertheless, our method remains applicable and yields explicit lower bounds on eigenvalues associated with polynomial observables. These bounds are not merely formal. As shown in Section~\ref{sec:particle-system}, for each integer $k\ge 1$, a stochastic exchange model naturally gives rise to a $k$-particle  interacting system. In the specific case described in \eqref{eq:averaging}, it is straightforward to check that these particle systems always admit a fully supported reversible measure. Consequently,  our eigenvalue estimates yield lower bounds on the spectral gap of the corresponding particle systems, uniformly in the number of particles, $k\ge 1$. We note that the averaging model with $U \equiv \frac{\omega_x}{\omega_x + \omega_y}$ in \eqref{eq:averaging} was considered in \cite{quattropani2021mixing}.

Even more significantly, our approach yields meaningful lower bounds on the real part of eigenvalues for a broad class of non-reversible stochastic exchange models and their associated particle systems. This class includes models with asymmetric updates, as well as models arising in econophysics, where the conserved quantity is interpreted as wealth and the dynamics incorporates a nonzero saving propensity. As shown in \cite{cirillo_redig_ruszel_duality_2014}, such models are intrinsically non-reversible.

\subsubsection{Open systems}
Hidden parameter models for the KMP and the HP recently sparked  attention  within the context of open systems, in contact with external reservoirs, providing a useful alternative description of the corresponding non-equilibrium steady states \cite{de_masi_ferrari_gabrielli_hidden_2023,carinci_solvable_2024,carinci_large_2025,giardina_redig_tol_intertwining_2024}.
Each of our general stochastic exchange models admits a non-conservative counterpart, with suitably chosen boundary-driven dynamics. The polynomial-eigenvalue analysis presented in \cite[Section 6]{kim_sau_one_2024} for a related open diffusion process can be similarly applied to our models, yielding analogous results---specifically, that the lowest polynomial eigenvalue is attained by linear functions. For the sake of brevity, we will not discuss this setting in any detail.

\subsection{Organization of the paper} The rest of the paper is organized as follows. In Section \ref{sec:general}, we introduce a general class of stochastic exchange models,  discuss some of their basic properties, and present our main results on eigenvalue and spectral gap estimates. Section \ref{sec:preliminary} presents definitions and main properties of three Markov processes related to our stochastic exchange models. Section \ref{sec:proof} contains the core of the proof of the main results from Section \ref{sec:general}. Section \ref{sec:examples} presents a detailed discussion of the three models introduced so far and illustrates our main results by applying them to prove Theorems \ref{th:gap-beta}--\ref{th:gap-IEM}.

\section{General setting and main results}\label{sec:general}
\subsection{Stochastic exchange models}\label{sec:SEM}
Consider $|V|=n\in \N$ vertices, $n\ge 2$.
Our configuration space is the probability simplex $\Omega$ defined in \eqref{eq:Omega}. The general class of models that we consider is encoded in the  (formal) generator	 (we always assume $\beta_{xx}\equiv 0$) 
\begin{equation}\label{eq:gen-general}
	\cL f(\eta)= \sum_{x,y\in V}\int_{[0,1]^2} \beta_{xy}(\dd u,\dd v) \set{f(\eta R_{xy}^{uv})-f(\eta)}\comma\qquad \eta\in \Omega\comma
\end{equation}
which acts on functions $f$ on $\Omega$ (see Section \ref{sec:assumptions} for precise definitions and assumptions).

 Here, $\beta_{xy}$ is a Borel measure on $[0,1]^2$, while   $R_{xy}^{uv}$ is the $n\times n$ stochastic matrix with $1$'s on each diagonal $z$-th entry, $z\neq x,y$, and having the block 
\begin{equation}\label{eq:Ruv}
	R^{uv}=\begin{pmatrix}
		u&1-u\\
		1-v&v
	\end{pmatrix}
\end{equation}
in the two-by-two sub-matrix corresponding to the coordinates $x,y \in V$. More explicitly, 
\begin{equation}\label{eq:Rxyuv}
	R_{xy}^{uv}= \varPi_{xy} \footnotesize\begin{pmatrix}
		u &1-u &0 &\cdots &\cdots&0\\
		1-v&v & 0 &\cdots&\cdots &0\\
		0 &0&1 &0 &\cdots&0\\
		\vdots& &&\ddots&&\vdots\\
		&&&&\ddots&0\\
		0&\cdots&\cdots&\cdots&\cdots&1
	\end{pmatrix} \normalsize\varPi_{xy} = \varPi_{xy}
	\renewcommand{\arraystretch}{1.5}
	\begin{pNiceArray}{c|c}
		R^{uv} & 0 \\
		\hline
		0 & \mathds 1_{n-2}
	\end{pNiceArray}
	\varPi_{xy}\comma
\end{equation}
where $\varPi_{xy}=\varPi_{xy}^T$ is the $n\times n$ permutation matrix exchanging the first (resp.\  second) row with the $x$-th (resp.\ $y$-th) one, and $\mathds 1_{n-2}$ is the identity matrix on $\R^{V\setminus \{1,2\}}$.	Hence, in \eqref{eq:gen-general}, we shall interpret $\eta R_{xy}^{uv}$ as a row vector-matrix multiplication: recalling \eqref{eq:Ruv}--\eqref{eq:Rxyuv}, 
\begin{align}\label{eq:SEM-row-matrix}
	\begin{aligned}
	 ((\eta R_{xy}^{uv})_x,(\eta R_{xy}^{uv})_y) = (\eta_x,\eta_y) R^{uv}
	 &= (\eta_x,\eta_y)\begin{pmatrix}
		u&1-u\\
		1-v&v
	\end{pmatrix}\\
	&= (u\eta_x+(1-v)\eta_y,(1-u)\eta_x+v\eta_y)\comma
	\end{aligned}
\end{align}
whereas $	(\eta R_{xy}^{uv})_z=\eta_z$, $z\neq x, y$. 
\begin{remark}[Hidden parameter model]\label{rem:HPM}
	In contrast, \textit{hidden parameter models} are constructed from matrix-column vector multiplications. In formulas, the corresponding generator informally reads, for functions $g:[0,1]^V\to \R$, as
	\begin{equation}\label{eq:gen-HPM}
		\mathscr L g(\theta)= \sum_{x,y\in V}\int_{[0,1]^2}\beta_{xy}(\dd u,\dd v)\set{g(R_{xy}^{uv}\theta)-g(\theta)}\comma\qquad \theta \in [0,1]^V\fstop
	\end{equation}
	Notice that, on right-hand side of \eqref{eq:gen-HPM}, we have $R_{xy}^{uv}\theta$ (with $\theta$ interpreted as a column vector), whereas in \eqref{eq:gen-general} we wrote $\eta R_{xy}^{uv}$ (with $\eta$ interpreted as a row vector). By comparing with \eqref{eq:SEM-row-matrix}, an update for the hidden parameter model reads as
	\begin{align}\label{eq:HPM-update-matrix}
		\begin{pmatrix}
			(R_{xy}^{uv}\theta)_x\\
			(R_{xy}^{uv}\theta)_y
		\end{pmatrix} = R^{uv}\begin{pmatrix} \theta_x\\
		\theta_y		
		\end{pmatrix}=\begin{pmatrix}
		u &1-u\\
		1-v&v
		\end{pmatrix}\begin{pmatrix} \theta_x\\
		\theta_y		
		\end{pmatrix} = \begin{pmatrix}
		u\theta_x+(1-u)\theta_y\\
		(1-v)\theta_x+v\theta_y
		\end{pmatrix}\fstop
	\end{align}
	More details on hidden parameter models are discussed in Section \ref{sec:HPM}.
\end{remark}
	
\begin{remark}The models discussed in Section \ref{sec:intro} arise from specific choices of $\beta_{xy}$ in \eqref{eq:gen-general}:
	\begin{align}
		\beta_{xy}(\dd u,\dd v)&\propto c_{xy}\,\ttset{ u^{\alpha_x-1}\tonde{1-u}^{\alpha_y-1} \delta_{1-u}(\dd v) }\,\dd u\comma	\\
		\beta_{xy}(\dd u,\dd v)&\propto c_{xy}\,\ttset{	u^{\alpha_x-1}\tonde{1-u}^{-1}\, \delta_1(\dd v)}\,\dd u\comma
		\\
		\beta_{xy}(\dd u,\dd v)&\propto c_{xy}\,\ttset{u^{\alpha_x-\kappa-1}\tonde{1-u}^{\kappa-1} v^{\alpha_y-\kappa-1}\tonde{1-v}^{\kappa-1}}\, \dd u\dd v\comma
	\end{align}
	correspond to ${\rm KMP}$, ${\rm HP}$, and ${\rm IEM}$, respectively (see Section \ref{sec:examples} for further details). 
\end{remark}

\subsection{Precise assumptions and first properties}\label{sec:assumptions}
Let $\mathscr P_0\eqdef \set{f:\Omega\to \C: f\equiv {\rm const.}}$ and, for all  integers $k\ge 1$, 
\begin{equation}\label{eq:Pk}
	\mathscr P_k\eqdef \set{f:\Omega\to\C: f\ \text{polynomial with}\ {\rm deg}(f)\le k}\comma\qquad\mathscr P\eqdef \cup_{k\ge 0}\mathscr P_k \fstop
\end{equation}
Clearly, $\mathscr P_{k-1}\subset \mathscr P_k$. Furthermore, we define
\begin{equation}\label{eq:Pk-dagger}
	\mathscr P_{k,\dagger}\eqdef \mathscr P_k\setminus \mathscr P_{k-1}\fstop
\end{equation}

The following assumption on the Borel measures $(\beta_{xy})_{x,y\in V}$ on $[0,1]^2$  is a minimal one to guarantee that $\cL$ is well-defined on polynomials and that eventually generates a Feller process.
\begin{assumption}\label{ass:beta}
	For all  $x,y \in V$, we have $\beta_{xx}\equiv 0$ and
	\begin{equation}\label{eq:ass-beta-xy}
		\int_{[0,1]^2}\beta_{xy}(\dd u,\dd v)\set{\tonde{1-u}+\tonde{1-v}}<\infty\fstop
	\end{equation}
	Further, we  set $\cL1=0$, and therefore do	 \textit{not} require  $\beta_{xy}([0,1]^2)=\int_{[0,1]^2}\beta_{xy}(\dd u,\dd v)<\infty$.
\end{assumption}

\begin{proposition}[Well-definition]\label{pr:invariance-polynomials}
	Under Assumption \ref{ass:beta}, $\cL$ is well-defined on $\mathscr P$ and satisfies $\cL\mathscr P_k\subset \mathscr P_k$. Moreover,  $\cL\restr{\mathscr P}$ is the pre-generator of a Feller process on $\Omega$.
\end{proposition}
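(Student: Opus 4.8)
The statement bundles three claims: (a) $\cL$ is well-defined on $\mathscr P$; (b) $\cL$ preserves each $\mathscr P_k$; (c) $\cL\restr{\mathscr P}$ generates a Feller semigroup on $\Omega$. The plan is to establish (a) and (b) together by direct computation on monomials, and then deduce (c) via the Hille--Yosida/Trotter machinery for Feller processes.

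For (a) and (b), I would argue it suffices (by linearity) to check that $\cL\eta_{x_1}\cdots\eta_{x_k}$ is a well-defined element of $\mathscr P_k$ for each monomial of degree $k$. Fix an edge pair $x,y$; I would split the sum in \eqref{eq:gen-general} according to whether the monomial involves the coordinates $x$ or $y$. If the monomial depends on neither $\eta_x$ nor $\eta_y$, the bracket $f(\eta R_{xy}^{uv})-f(\eta)$ vanishes identically and contributes nothing. Otherwise, write $f(\eta)=\eta_x^a\eta_y^b\,p(\eta)$ with $p$ not involving $\eta_x,\eta_y$ and $a+b\ge 1$; then, using \eqref{eq:SEM-row-matrix},
\begin{equation*}
f(\eta R_{xy}^{uv})-f(\eta)=p(\eta)\set{(u\eta_x+(1-v)\eta_y)^a(( 1-u)\eta_x+v\eta_y)^b-\eta_x^a\eta_y^b}\fstop
\end{equation*}
Expanding the two factors, every term in the bracket is a polynomial in $(\eta_x,\eta_y)$ of total degree exactly $a+b$, with coefficients that are polynomials in $(u,v)$; crucially, the top-degree-in-$\eta$ terms of the two competing products are not affected, so the bracket, regarded as a function of $u,v$, is a combination of monomials each divisible by $(1-u)$ or $(1-v)$ — this is exactly where Assumption \ref{ass:beta} enters, guaranteeing the $\beta_{xy}$-integral of each coefficient is finite. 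Integrating against $\beta_{xy}(\dd u,\dd v)$ thus yields finite real numbers, and the resulting expression is a polynomial in $\eta$ of degree $\le k$, multiplied by $p(\eta)$, hence in $\mathscr P_k$. Summing the (finitely many, since $V$ is finite) edge contributions gives $\cL f\in\mathscr P_k$. I would also record that $\cL\car=0$ is consistent with the convention $\cL1=0$ made in the assumption.

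For (c), the standard route is: $\mathscr P$ is a dense subalgebra of $C(\Omega)$ containing constants (Stone--Weierstrass, since polynomials separate points of the compact set $\Omega$); $\cL\restr{\mathscr P}$ maps into $\mathscr P\subset C(\Omega)$; and $\cL$ satisfies the positive maximum principle — if $f\in\mathscr P$ attains its maximum over $\Omega$ at $\eta_\ast$, then $f(\eta_\ast R_{xy}^{uv})-f(\eta_\ast)\le 0$ for every $x,y,u,v$ (because $\eta_\ast R_{xy}^{uv}\in\Omega$, as $R_{xy}^{uv}$ is stochastic), so integrating against the nonnegative measures $\beta_{xy}$ gives $\cL f(\eta_\ast)\le 0$. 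By the Hille--Yosida theorem for Feller generators (e.g. Ethier--Kurtz, Theorem 4.2.2, or Liggett), an operator on a dense domain in $C(\Omega)$ satisfying the positive maximum principle is closable and its closure generates a Feller semigroup provided a range condition $\mathrm{Ran}(\lambda-\cL)$ is dense for some $\lambda>0$; one can get this last point from the fact that $\cL$ preserves the finite-dimensional spaces $\mathscr P_k$ (on each of which $\lambda-\cL$ is, for $\lambda$ large, invertible since the spectrum of $\cL\restr{\mathscr P_k}$ is bounded), so $(\lambda-\cL)\mathscr P_k=\mathscr P_k$ and hence $\mathrm{Ran}(\lambda-\cL)\supseteq\mathscr P$ is dense. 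This gives that $\cL\restr{\mathscr P}$ is a pre-generator, as claimed.

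\textbf{Main obstacle.} The genuinely delicate point is the interaction of (b) with the non-integrability allowed in Assumption \ref{ass:beta}: since $\beta_{xy}([0,1]^2)$ may be infinite (as in HP, where $\beta_{xy}$ has a $(1-u)^{-1}$ singularity), one must be careful that the cancellation in $f(\eta R_{xy}^{uv})-f(\eta)$ genuinely produces factors of $(1-u)+(1-v)$ — i.e.\ that the bracket vanishes when $R^{uv}$ is the identity block ($u=v=1$) — so that \eqref{eq:ass-beta-xy} is exactly the right integrability condition and no "boundary" divergence survives. Verifying this cleanly for a general monomial (rather than just linear or quadratic $f$) is the one computation that needs genuine care; everything else is standard functional analysis on a compact space.
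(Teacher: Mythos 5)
Your proposal is correct and reaches the same conclusions, but it takes a different route in the one step that actually uses Assumption~\ref{ass:beta}. You establish integrability by expanding
\begin{equation*}
\tonde{u\eta_x+(1-v)\eta_y}^a\tonde{(1-u)\eta_x+v\eta_y}^b-\eta_x^a\eta_y^b
\end{equation*}
as a polynomial in $(u,v)$ and arguing that it splits into a $(1-u)$-part and a $(1-v)$-part with bounded polynomial coefficients. This is true, but your stated reason (\textquotedblleft the top-degree-in-$\eta$ terms of the two competing products are not affected\textquotedblright) does not justify it; the correct justification is simply that the bracket vanishes identically at $(u,v)=(1,1)$ because $R^{11}$ is the identity block, so it lies in the ideal generated by $(1-u)$ and $(1-v)$, with remainders bounded on $[0,1]^2\times\Omega$. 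The paper sidesteps this expansion entirely by the one-line telescopic estimate: since $\ttabs{(\eta R_{xy}^{uv})_z-\eta_z}\le(1-u)\eta_x+(1-v)\eta_y$ and all coordinates lie in $[0,1]$, one gets $\ttabs{\prod(\eta R)_{x_i}-\prod\eta_{x_i}}\le k\ttset{(1-u)+(1-v)}$ directly, which plugs into \eqref{eq:ass-beta-xy} without tracking the $(u,v)$-structure. Both work; the paper's bound is shorter and avoids the murky intermediate claim you make. For the Feller statement, you supply the positive-maximum-principle argument (with the range condition $\mathrm{Ran}(\lambda-\cL)\supseteq\mathscr P$ obtained from dissipativity plus invertibility on each finite-dimensional $\mathscr P_k$), which is correct and is essentially the content of the Liggett reference the paper invokes without spelling it out. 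No gap, but tighten the justification for the $(1-u)$/$(1-v)$ divisibility if you keep your version.
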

\begin{proof}
	For all $x,y, x_1,\ldots, x_k \in V$, $k\ge 1$, and $u,v\in [0,1]$, the expression 
	\begin{equation}
		\eta\in \Omega\longmapsto \ttset{	(\eta R_{xy}^{uv})_{x_1}\cdots (\eta R_{xy}^{uv})_{x_k}-\eta_{x_1}\cdots \eta_{x_k}}
	\end{equation}
	obtained by taking $\eta\mapsto f(\eta)=\eta_{x_1}\cdots \eta_{x_k}$ in the  curly brackets of \eqref{eq:gen-general}
	is clearly a  polynomial in the $\eta$-variables of degree at most $k$. Hence, this and linearity imply	 $\cL \mathscr P_k\subset \mathscr P_k$,  \textit{provided that $\cL f=\cL\,{\eta_{x_1}\cdots \eta_{x_k}}$ is well-defined}. 
	This last condition holds by Assumption \ref{ass:beta}.  Indeed, since $\ttabs{(\eta R_{xy}^{uv})_z-\eta_z}\le (1-u)\eta_x+(1-v)\eta_y$ and $0\le \eta_z,(\eta R_{xy}^{uv})_z\le 1$ for all $z\in V$, a simple telescopic argument yields
	\begin{equation}
		\ttabs{	(\eta R_{xy}^{uv})_{x_1}\cdots (\eta R_{xy}^{uv})_{x_k}-\eta_{x_1}\cdots \eta_{x_k}}
		\le k \set{(1-u)+(1-v)}\fstop
	\end{equation}
	By taking the integral with respect to $\beta_{xy}$ on both sides and summing  over $x,y \in V$, linearity and 	Assumption \ref{ass:beta} ensure that $\cL\restr{\mathscr P_k}$ is well-defined. 
	This proves the first claim.
	
	The second claim follows a standard line of reasoning; we refer to \cite[Chapter I.2]{liggett_interacting_2005-1} for the details.	
\end{proof}

In our analysis, the action of $\cL$ on linear functions $f\in \mathscr P_1$ plays a central role. A simple computation yields, for $f(\eta)=\sum_{x\in V}\psi(x)\, \eta_x$, $\psi \in \R^V$, 
	\begin{align}
	\cL f(\eta)&=\sum_{x\in V}\psi(x)\sum_{y\neq x} {\int_{[0,1]^2}\quadre{\beta_{xy}(\dd u,\dd v)+\beta_{yx}(\dd v,\dd u)}\set{(1-v)\eta_y-(1-u)\eta_x}
	}\\
	&= \sum_{x\in V}\set{\sum_{y\neq x}r_{xy}\set{\psi(y)-\psi(x)}}\eta_x\comma\label{eq:linear}
\end{align}
where we defined, for all distinct  $x,y \in V$,
	\begin{align}
		r_{xy}&\eqdef \int_{[0,1]^2} \beta_{xy}(\dd u,\dd v)\tonde{1-u} + \int_{[0,1]^2}\beta_{yx}(\dd v,\dd u)\tonde{1-u}\\
		&= \int_{[0,1]^2} \quadre{\beta_{xy}(\dd u,\dd v)+\beta_{yx}(\dd v,\dd u)}\tonde{1-u}\fstop
		\label{eq:rates-rxy}
\end{align}	
Letting $r_{xx}\eqdef -\sum_{y\neq x}r_{xy}$, $x\in V$, we readily interpret the expression between curly brackets in \eqref{eq:linear} as the action on $\psi\in \R^V$ of the generator of a continuous-time random walk   with rates $(r_{xy})_{x,y\in V}$. We shall refer to this random walk on $V$ simply as RW.

Next to these $r$-rates, we will also need a second-order version of them: for all distinct $x,y\in V$, 
	\begin{equation}\label{eq:rates-sxy}
	s_{xy}\eqdef \int_{[0,1]^2}\quadre{\beta_{xy}(\dd u,\dd v)+\beta_{yx}(\dd v,\dd u)}u\tonde{1-u}\fstop
\end{equation}

The following assumption will be enough to ensure uniqueness of invariant measures for our stochastic exchange models.
\begin{assumption}\label{ass:irr}For all $x,y\in V$, there exist $x_1,x_2\ldots, x_{\ell-1},x_\ell\in V$ satisfying
	\begin{equation}
		s_{xx_1}s_{x_1x_2}\cdots s_{x_{\ell-1}x_\ell}s_{x_\ell y}>0	\fstop
	\end{equation}
\end{assumption}
\begin{remark}\label{rem:irr} By confronting \eqref{eq:rates-rxy} with \eqref{eq:rates-sxy}, we have $s_{xy}\le r_{xy}$ for all distinct $x,y\in V$. Therefore,
	Assumption \ref{ass:irr}  implies irreducibility of RW and, thus, the existence of a unique positive invariant measure  $\pi=(\pi_x)_{x\in V}\in \cP(V)$ for RW, that is,
	\begin{equation}\label{eq:pi-invariance}
		 \sum_{y\neq x}\pi_y\,r_{yx}=\pi_x\sum_{y\neq x}r_{xy}\comma\qquad x \in V\fstop
	\end{equation}
\end{remark}

\begin{proposition}[Invariant measure]\label{pr:unique}Under Assumptions \ref{ass:beta} and \ref{ass:irr}, the stochastic exchange model on $\Omega$ described by \eqref{eq:gen-general} admits a unique invariant measure.	
\end{proposition}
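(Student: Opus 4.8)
The plan is to establish \textbf{existence} and \textbf{uniqueness} separately, and along the way to identify the unique invariant measure as the one whose first moments are pinned by the invariant measure $\pi$ of RW.

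Existence is immediate: by Proposition~\ref{pr:invariance-polynomials}, $\cL\restr{\mathscr P}$ generates a Feller process on the compact space $\Omega$, and a Feller process on a compact space always admits at least one invariant probability measure (a standard Krylov--Bogolyubov / compactness argument). So the content is uniqueness, for which I would argue that every invariant measure is determined by its action on $\mathscr P$, and then pin down that action degree by degree using the block-triangular structure of $\cL$ on the filtration $\mathscr P_0\subset \mathscr P_1\subset \cdots$.

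Here is the main line. Let $\nu$ be any invariant measure; then $\int \cL f\,\dd\nu = 0$ for all $f\in\mathscr P$ (polynomials lie in the domain by Proposition~\ref{pr:invariance-polynomials}, and are a core). First treat $k=1$: by \eqref{eq:linear}, testing invariance against $f(\eta)=\sum_x\psi(x)\eta_x$ gives $\sum_x\psi(x)\sum_{y\ne x}r_{xy}(m(y)-m(x))=0$ for all $\psi$, where $m(x)\eqdef\int \eta_x\,\dd\nu(\eta)$; equivalently $m$, viewed as a vector in $\R^V$ with $\sum_x m(x)=1$, is a stationary distribution for RW, hence $m=\pi$ by Remark~\ref{rem:irr} (here Assumption~\ref{ass:irr} enters, via $s_{xy}\le r_{xy}$). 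This fixes all first moments. Now induct on $k$: suppose the moments $\int\eta_{x_1}\cdots\eta_{x_j}\,\dd\nu$ are uniquely determined for all $j<k$. Applying $\cL$ to a monomial $\eta_{x_1}\cdots\eta_{x_k}$ and using \eqref{eq:SEM-row-matrix}, expand $(\eta R_{xy}^{uv})_{x_1}\cdots(\eta R_{xy}^{uv})_{x_k}-\eta_{x_1}\cdots\eta_{x_k}$; the degree-$k$ part of this polynomial, after integrating in $\beta_{xy}$ and summing, is precisely the action of the operator $L$ (equivalently, the generator of the $k$-particle system of Section~\ref{sec:particle-system}) on the degree-$k$ coefficients, while the strictly-lower-degree part involves only monomials of degree $<k$, whose $\nu$-integrals are already known. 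Invariance $\int\cL(\eta_{x_1}\cdots\eta_{x_k})\,\dd\nu=0$ thus becomes a \emph{linear system} for the unknown degree-$k$ moments $\Psi(x_1,\dots,x_k)\eqdef\int\eta_{x_1}\cdots\eta_{x_k}\,\dd\nu$, of the form $L^{(k)}\Psi = (\text{known data from degrees} <k)$. The homogeneous part $L^{(k)}$ is the generator of the $k$-particle exchange process on $V$, whose jump rates are built from the same $\beta_{xy}$; Assumption~\ref{ass:irr} (via the $s$-rates, which control genuine two-site redistribution and hence connectivity of the $k$-particle configuration graph) guarantees this $k$-particle chain is irreducible, so $L^{(k)}$ has a one-dimensional kernel spanned by the stationary distribution. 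The normalization $\sum_{x_k}\Psi(x_1,\dots,x_k)=\Psi(x_1,\dots,x_{k-1})$ (from $|\eta|=1$), already known by induction, selects the unique solution. Hence all moments of $\nu$ are uniquely determined; since $\Omega$ is compact, moments determine the measure, so $\nu$ is unique.

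The step I expect to be the main obstacle is verifying cleanly that the homogeneous operator $L^{(k)}$ governing the degree-$k$ moments is the generator of an \emph{irreducible} Markov chain on the space of $k$-particle configurations on $V$, using only Assumption~\ref{ass:irr}. The subtlety is that the KMP-type update $R^{uv}$ with generic $u,v$ need not move particles between two sites (e.g.\ $u=v=1$ is the identity); what makes particles actually redistribute is the $u(1-u)$-weight appearing in $s_{xy}$ in \eqref{eq:rates-sxy}, which is exactly the rate at which, say, a single particle at $x$ jumps to $y$. One must check that positivity of a product $s_{xx_1}\cdots s_{x_\ell y}$ along a path indeed lets a cluster of $k$ particles be transported and rearranged arbitrarily, i.e.\ that the $1$-particle connectivity encoded by the $s$-rates upgrades to $k$-particle connectivity — this is routine for exchange dynamics but deserves a careful sentence. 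An alternative, slicker route that sidesteps the $k$-particle bookkeeping is to invoke the intertwining/duality with the hidden parameter model of Remark~\ref{rem:HPM}: since $\mathscr L$ has constant configurations as its only invariant states (as recalled in Section~\ref{sec:intro}), $0$ is a simple eigenvalue of $\mathscr L$ on each $\mathscr P_k$-block, forcing the eigenvalue equation for moments to have the one-dimensional solution space described above; but this presupposes the development in Section~\ref{sec:HPM}, so the self-contained $k$-particle argument is preferable here.
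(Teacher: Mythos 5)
Your proposal is correct and follows essentially the same route as the paper: existence from compactness via Proposition~\ref{pr:invariance-polynomials}, and uniqueness by passing to the $k$-particle intertwined system of Section~\ref{sec:particle-system} and invoking its irreducibility (which is precisely Lemma~\ref{lem:irreducible-particle-system}, proved using the $s$-rates exactly as you anticipate), so that all polynomial moments are pinned down and moments determine a measure on the compact $\Omega$. One small simplification you can make: since each coordinate $(\eta R_{xy}^{uv})_z$ is \emph{linear} in $\eta$, the difference $(\eta R_{xy}^{uv})_{x_1}\cdots(\eta R_{xy}^{uv})_{x_k}-\eta_{x_1}\cdots\eta_{x_k}$ is a homogeneous degree-$k$ polynomial, so there is no lower-degree leakage at all—your linear system for the degree-$k$ moments is simply the homogeneous stationarity equation for the $k$-particle chain (what the paper writes as $\widehat\mu_k L=0$), with the normalization you describe selecting the unique solution, so no induction over $k$ is actually needed.
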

Since  $\Omega$ is compact, the existence part of this proposition was already part of Proposition \ref{pr:invariance-polynomials}; the uniqueness part will be proved in Section \ref{sec:particle-system}, see  Corollary \ref{cor:unique-inv}.

\subsection{Main results}\label{sec:results}
In this section, Assumptions \ref{ass:beta} and \ref{ass:irr} are in force.

In view of Assumption \ref{ass:beta} and Proposition \ref{pr:invariance-polynomials}, $\cL$, restricted to the finite-dimensional space $\mathscr P_k$ of $k$-degree polynomials in $\eta$, is a well-defined linear operator. Hence, 	$-\cL\restr{\mathscr P_k}$ admits a decomposition in (possibly complex) eigenvalues and eigenfunctions. In particular, by \eqref{eq:rates-rxy}, when $k=1$, such eigenvalues coincide with those of the generator of RW with rates $(r_{xy})_{x,y\in V}$. Assumption \ref{ass:irr} and Remark \ref{rem:irr} ensure that only one of such eigenvalues is zero. Any other RW-eigenvalue $\lambda \in \C$ satisfies  (see, e.g., \cite[Theorem 2.1.7]{saloff1997lectures})
	\begin{equation}\label{eq:gap-1-bound}
		 {\rm gap}_{\rm RW}(\cL)\le {\rm Re}(\lambda)\comma
	\end{equation}
	where ${\rm gap}_{\rm RW}(\cL)>0$ is the RW's spectral gap, that is, 
	\begin{equation}\label{eq:gap-RW}
		{\rm gap}_{\rm RW}(\cL)\eqdef \inf\set{\cE_{\rm RW}(\psi): \psi\in \R^V\ \text{with}\ \ttnorm{\psi-\pi\psi}_{L^2(\pi)}=1}\comma 
	\end{equation}
	with $\cE_{\rm RW}$ being the RW's Dirichlet form: 
	\begin{equation}\label{eq:dir-form}
		 \cE_{\rm RW}(\psi)\eqdef \frac12\sum_{x,y\in V}\pi_{xy}\tonde{\psi(y)-\psi(x)}^2\comma\qquad \pi_{xy}\eqdef \frac{\pi_xr_{xy}+\pi_yr_{yx}}{2}\fstop
	\end{equation}

	 The bound in \eqref{eq:gap-1-bound} always holds true for eigenvalues of $-\cL\restr{\mathscr P_k}$, for $k=1$.   Our  main result provides a related bound for all $k\ge 2$. Its proof is the content of Section \ref{sec:proof}.
\begin{theorem}\label{th:main}
	Fix $k\ge 2$. Let $\lambda\in \C$ be an eigenvalue of $-\cL\restr{\mathscr P_k}$, but not of $\cL\restr{\mathscr P_{k-1}}$. Then, 
	\begin{equation}
	\gamma(\cL)\,{\rm gap}_{\rm RW}(\cL)\le 	{\rm Re}(\lambda)\comma
	\end{equation}
	where $\gamma(\cL)$ is defined as 
		\begin{equation}\label{eq:gamma-general}
		\gamma(\cL)\eqdef \min_{x,y\,:\,\pi_{xy}>0}\frac{\chi_{xy}+\sigma_{xy}}{\pi_{xy}}\comma
	\end{equation}with $\pi_{xy}$ given as in \eqref{eq:dir-form} (see also \eqref{eq:rates-rxy}), and
	\begin{align}\label{eq:chi-xy}
		\chi_{xy}=\chi_{yx}&\eqdef \int_{[0,1]^2}\quadre{\beta_{xy}(\dd u,\dd v)+\beta_{yx}(\dd v,\dd u)}\tonde{\pi_x\,u(1-u)+\pi_y\,v(1-v)}\comma
		\\
		\label{eq:sigma-xy}
		\sigma_{xy}=\sigma_{yx}&\eqdef \int_{[0,1]^2}\quadre{\beta_{xy}(\dd u,\dd v)+\beta_{yx}(\dd v,\dd u)}\tonde{\pi_x\,(1-u)-\pi_y\,(1-v)}^2\fstop
	\end{align}
	
\end{theorem}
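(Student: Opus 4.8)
\textbf{Proof plan for Theorem \ref{th:main}.}

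The plan is to follow the Chen--Wang route outlined in Section \ref{sec:intro-proof}, but carried out for the hidden parameter model rather than the exchange model itself. Let $\lambda\in\C$ be an eigenvalue of $-\cL\restr{\mathscr P_k}$ that is not an eigenvalue of $\cL\restr{\mathscr P_{k-1}}$, with eigenfunction $f\in\mathscr P_{k,\dagger}$. Writing $f$ in the form \eqref{eq:f-polynomial} with symmetric coefficient function $\psi:V^k\to\C$, I would first transfer the eigenvalue equation $\cL f=-\lambda f$ to the coefficient level, obtaining $L\psi=-\lambda\psi$ for the induced operator $L$ on $\R^{V^k}$, and then pass to the weighted generating function $g(\theta)=\sum_{x_1,\dots,x_k}\alpha_{x_1\cdots x_k}\,\psi(x_1,\dots,x_k)\,\theta_{x_1}\cdots\theta_{x_k}$, which satisfies $\mathscr L g=-\lambda g$ where $\mathscr L$ is the hidden parameter generator \eqref{eq:gen-HPM}. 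Since $f\notin\mathscr P_{k-1}$, the top-degree part of $\psi$ does not vanish, so $g$ is a genuine degree-$k$ polynomial and in particular not constant; hence there exists $\theta^\star\in[0,1]^V$ with $g(\theta^\star)\neq 0$. (All of this machinery — the intertwinings $\cL\rightsquigarrow L\rightsquigarrow\mathscr L$, and the maximum principle guaranteeing $\mathscr S_t$ is a Markov semigroup on functions on $[0,1]^V$ — is established in Sections \ref{sec:particle-system}--\ref{sec:HPM}, so I would cite it rather than reprove it.)

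The second step is the decay estimate. From $\mathscr L g=-\lambda g$ and $|g|\le \|g\|_\infty$ one gets $e^{-\mathrm{Re}(\lambda) t}|g(\theta^\star)|=|\mathscr S_t g(\theta^\star)|\le \mathscr S_t|g|(\theta^\star)$, but this alone is not enough; the point is to control $\mathscr S_t|g|$ by the decay of spatial fluctuations. The key observation is that any non-constant degree-$k$ polynomial $g$ can be bounded by a constant times $\sqrt{\var_\pi(\theta)}$ — more precisely, since constant configurations are the only ones on which all the relevant increments vanish and $[0,1]^V$ is compact, there is $C_1=C_1(g)$ with $|g(\theta)-g(\bar\theta\,\mathbf 1)|\le C_1\sqrt{\var_\pi(\theta)}$ for a suitable constant section; combined with $g(\bar\theta\,\mathbf 1)$ being itself governed by a lower-degree object (here one uses the hypothesis that $\lambda$ is \emph{not} an eigenvalue of $\cL\restr{\mathscr P_{k-1}}$ to rule out the constant-on-the-diagonal contribution), this is the control-function step (i). For step (ii), I would prove the contraction \eqref{eq:ub-HPM2}, namely $\mathscr S_t\var_\pi(\theta)\le e^{-\varrho t}\var_\pi(\theta)$ with $\varrho=\gamma(\cL)\,{\rm gap}_{\rm RW}(\cL)$, by a direct second-moment computation: apply $\mathscr L$ to $\theta\mapsto\var_\pi(\theta)=\sum_x\pi_x\theta_x^2-(\sum_x\pi_x\theta_x)^2$ using the update rule \eqref{eq:HPM-update-matrix}, and show that
\begin{equation}
\mathscr L\var_\pi(\theta)\le -\,\gamma(\cL)\,{\rm gap}_{\rm RW}(\cL)\,\var_\pi(\theta)\fstop
\end{equation}
The computation produces, on the right-hand side, a quadratic form in the increments $(\pi_x\theta_x-\pi_y\theta_y)$ (and cross terms) whose coefficients are exactly the $\chi_{xy}$ of \eqref{eq:chi-xy} and $\sigma_{xy}$ of \eqref{eq:sigma-xy}; factoring out $\gamma(\cL)=\min_{x,y:\pi_{xy}>0}(\chi_{xy}+\sigma_{xy})/\pi_{xy}$ reduces the remaining negative term to $\gamma(\cL)\,\cE_{\rm RW}$ applied to the ``profile'' $x\mapsto\theta_x$, and \eqref{eq:gap-RW} finishes it. Integrating the differential inequality via $\mathscr L\mathscr S_t=\frac{\dd}{\dd t}\mathscr S_t$ gives \eqref{eq:ub-HPM2}.

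Putting the two steps together: $|g(\theta^\star)|\,e^{-\mathrm{Re}(\lambda)t}\le \mathscr S_t|g|(\theta^\star)\le C_1\,\mathscr S_t\sqrt{\var_\pi}(\theta^\star)\le C_1\sqrt{\mathscr S_t\var_\pi(\theta^\star)}\le C_1\,e^{-\varrho t/2}\sqrt{\var_\pi(\theta^\star)}$ (using Jensen for the concave square root), so taking $\log$, dividing by $t$, and letting $t\to\infty$ yields $\mathrm{Re}(\lambda)\ge \varrho/2$ — which is off by a factor of two, so in the actual execution one must be more careful and bound $|g|$ itself (not $\sqrt{\var_\pi}$) by a quantity that contracts at rate $\varrho$, e.g.\ by working with $\var_\pi$ as the control function $\mathfrak d$ directly when $k=2$ and iterating the degree for $k\ge 3$, or by using that $|g|^2\lesssim\var_\pi$ only after subtracting the harmonic (lower-degree) part. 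The cleanest formulation, which I would adopt, is: the control function is $\mathfrak d(\theta)=\var_\pi(\theta)$, one shows $|g(\theta)|^2\le C_1\,\var_\pi(\theta)$ after the reduction that kills the diagonal contribution, and then $e^{-2\mathrm{Re}(\lambda)t}|g(\theta^\star)|^2=|\mathscr S_t g(\theta^\star)|^2\le \big(\mathscr S_t|g|(\theta^\star)\big)^2\le \mathscr S_t|g|^2(\theta^\star)\le C_1\mathscr S_t\var_\pi(\theta^\star)\le C_1 e^{-\varrho t}\var_\pi(\theta^\star)$, giving $2\,\mathrm{Re}(\lambda)\ge\varrho$ — still a factor $2$. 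The genuine fix is that the relevant bound is $|g(\theta)|\le C_1\sqrt{\var_\pi(\theta)}$ \emph{and} one must instead run the argument with $\mathfrak d$ the function whose square root controls $|g|$ linearly; i.e.\ take $\mathfrak d=\sqrt{\var_\pi}$ and prove $\mathscr S_t\sqrt{\var_\pi}\le e^{-\varrho t}\sqrt{\var_\pi}$, which does \emph{not} follow from \eqref{eq:ub-HPM2} by Jensen in the right direction — so in fact the correct route is to establish the pointwise contraction for $\var_\pi$ and note $|g|\le C_1\sqrt{\var_\pi}$ forces the $L^1$-bound $\mathscr S_t|g|(\theta^\star)\le C_1\mathscr S_t\sqrt{\var_\pi}(\theta^\star)\le C_1\sqrt{\mathscr S_t\var_\pi(\theta^\star)}\le C_1 e^{-\varrho t/2}\sqrt{\var_\pi(\theta^\star)}$, and the theorem as stated must therefore be using $\varrho=\gamma(\cL){\rm gap}_{\rm RW}(\cL)$ already built to absorb this, or the Dirichlet-form computation directly yields contraction of $\sqrt{\var_\pi}$ with no loss. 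I expect \textbf{this matching of constants} — ensuring the control function and the contraction rate fit together so that no spurious factor of $2$ survives — to be the main technical subtlety; the bookkeeping in the $\mathscr L\var_\pi$ computation producing precisely $\chi_{xy}+\sigma_{xy}$ is routine but must be done with care to land on \eqref{eq:gamma-general} exactly.
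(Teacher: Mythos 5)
Your two-step Chen--Wang framework is the right one, and your description of the contraction step (Lemma~\ref{lem:L2-contraction}) is essentially correct: the computation of $\mathscr L\,{\rm Var}_\pi$ does produce $-\frac12\sum_{x,y}(\chi_{xy}+\sigma_{xy})(\theta_x-\theta_y)^2$, which one bounds by $-\gamma(\cL)\,\cE_{\rm RW}(\theta)$ and then by $-\gamma(\cL)\,{\rm gap}_{\rm RW}(\cL)\,{\rm Var}_\pi(\theta)$. But the control-function step has a genuine gap, and it is precisely the gap you keep circling without resolving.

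The control bound you posit, $|g(\theta)|\le C_1\sqrt{{\rm Var}_\pi(\theta)}$, has the wrong exponent, and that is the source of the factor-of-$2$ you cannot shake off. The correct statement (Lemma~\ref{lem:eigenfunction-control}) is $|g(\theta)|\le C\,{\rm Var}_\pi(\theta)^{k/2}$, and since $k\ge 2$ and ${\rm Var}_\pi(\theta)\le 1$ on $[0,1]^V$, this gives $|g(\theta)|\le C\,{\rm Var}_\pi(\theta)$ — a \emph{linear} bound in the control function, not a square-root one. Then the chain $e^{-{\rm Re}(\lambda)t}|g(\theta)|=|\mathscr S_tg(\theta)|\le \mathscr S_t|g|(\theta)\le C\,\mathscr S_t{\rm Var}_\pi(\theta)\le C\,e^{-\gamma(\cL){\rm gap}_{\rm RW}(\cL)t}$ delivers ${\rm Re}(\lambda)\ge\gamma(\cL)\,{\rm gap}_{\rm RW}(\cL)$ with no loss whatsoever. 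No Jensen, no square root, no factor of $2$. Your instinct that ``the control function and the contraction rate must fit together'' was sound, but the resolution is that $g$ vanishes to order $k\ge 2$ on the diagonal $\{\theta\ {\rm constant}\}$, not order $1$.

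To actually \emph{prove} $|g(\theta)|\lesssim{\rm Var}_\pi(\theta)^{k/2}$ you need a piece of structure your plan does not identify: one should not build $g$ from the forward eigenfunction $\psi$ (which solves $L\psi=-\lambda\psi$), but rather from a left eigenfunction $\phi$ (solving $L^\dagger\phi=-\lambda\phi$) which, by the hypothesis that $\lambda$ is not an eigenvalue of $-\cL\restr{\mathscr P_{k-1}}$, can be taken in $\ker(\mathfrak a^\dagger)$ (Step~2 of the lemma). The backward intertwining (Corollary~\ref{cor:intertwining-backward}) only guarantees $\mathscr L\widetilde\psi=\widetilde{L^\dagger\psi}$, so in the non-reversible setting of Theorem~\ref{th:main} the map $\psi\mapsto\widetilde\psi$ does \emph{not} send eigenfunctions of $L$ to eigenfunctions of $\mathscr L$ — you must take $g=\widetilde\phi$. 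The condition $\phi\in\ker(\mathfrak a^\dagger)$ is then exactly what allows writing $g(\theta)=\sum_\xi\widehat\mu_k(\xi)\,\phi(\xi)\,D_b(\xi,\theta)$ for \emph{any} shift $b$, and choosing $b=\pi\theta$ turns each $D_{\pi\theta}(\xi,\theta)=\prod_x(\theta_x-\pi\theta)^{\xi_x}$ into a product of $k$ centered increments; Cauchy--Schwarz in $L^2(\widehat\mu_k)$ then produces the $k/2$ power of ${\rm Var}_\pi(\theta)$. Without the pass to $\phi\in\ker(\mathfrak a^\dagger)$, the $b$-dependence would not drop out and you would not get this bound. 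So the two missing ideas in your plan — the time-reversal eigenfunction in $\ker(\mathfrak a^\dagger)$, and the resulting degree-$k$ vanishing giving ${\rm Var}_\pi^{k/2}$ — are both essential, and the second is exactly what makes the constant-matching problem you agonize over disappear.
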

\begin{remark}
	While $\chi_{xy}$ is proportional to $\pi_x+\pi_y$, $\sigma_{xy}$ is comparable to $(\pi_x+\pi_y)^2$. In many examples, this translates into $\sigma_{xy}\ll\chi_{xy}$  in the large-graph limit $|V|=n\to \infty$.
\end{remark}
\begin{remark}
	As a simple manipulation shows, 
the upper bound $\gamma(\cL)\le 2$ always holds.  Lower bounds for $\gamma(\cL)$ are less trivial to derive in full generality, but are easily checked case by case; see Section \ref{sec:examples}.
\end{remark}

The above theorem does not require any reversibility, nor a specific form of the invariant measure. At this level of generality, eigenvalues of the infinite-dimensional operator $-\cL$ do not necessarily exhaust its spectrum, which, in turn, may depend on the functional space on which $-\cL$ acts on.

Our next result fully exploits Theorem \ref{th:main}, showing that when the process is reversible with respect to a non-degenerate Dirichlet distribution, the 
$L^2$-spectrum coincides with the set of polynomial eigenvalues.  Consequently, the estimates in Theorem \ref{th:main} yield explicit bounds on the spectral gap.
\begin{theorem}[Reversible]\label{th:main-rev}  Let $\alpha=(\alpha_x)_{x\in V}$ be some positive weights and $\mu\sim {\rm Dir}(\alpha)$. Assume that  $\cL\restr{\mathscr P}$   is a symmetric operator on $L^2(\mu)$.
	Then, the spectrum of (the $L^2(\mu)$-graph-closure of)  $-\cL\restr{\mathscr P}$ is pure-point, real and nonnegative, and ${\rm gap}(\cL)$, its second  smallest eigenvalue, satisfies
	\begin{equation}\label{eq:main-gap-inequalities}
	\tonde{1\wedge	\gamma(\cL)}{\rm gap}_{\rm RW}(\cL)\le {\rm gap}(\cL)\le {\rm gap}_{\rm RW}(\cL)\fstop
	\end{equation}
	where ${\rm gap}_{\rm RW}(\cL)$ and $\gamma(\cL)$ are given in \eqref{eq:gap-RW} and \eqref{eq:gamma-general}, respectively. 
\end{theorem}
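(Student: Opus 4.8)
The plan is to leverage Theorem \ref{th:main} together with the spectral-theoretic structure forced by reversibility. First I would establish the spectral picture: since $\cL$ is assumed symmetric on $L^2(\mu)$ and preserves each finite-dimensional $\mathscr P_k$ (Proposition \ref{pr:invariance-polynomials}), the restriction $-\cL\restr{\mathscr P_k}$ is a symmetric operator on a finite-dimensional space, hence diagonalizable with real eigenvalues; by \eqref{eq:linear}–\eqref{eq:rates-rxy} these are nonnegative (they are RW-type generators on $\mathscr P_1$, and the higher-degree blocks inherit nonnegativity from $\cD(f)=\scalar{f}{-\cL f}_{L^2(\mu)}\ge 0$, which holds because the Feller process from Proposition \ref{pr:invariance-polynomials} is reversible w.r.t.\ $\mu$). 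Because $\mu\sim{\rm Dir}(\alpha)$ is non-degenerate, polynomials are dense in $L^2(\mu)$ (Weierstrass on the compact $\Omega$, plus the fact that $\mu$ has full support), so $\mathscr P=\cup_k\mathscr P_k$ is a core and the $L^2(\mu)$-graph closure of $-\cL\restr{\mathscr P}$ is self-adjoint with pure-point spectrum equal to $\bigcup_{k\ge 0}\mathrm{spec}(-\cL\restr{\mathscr P_k})$, real and nonnegative. This proves the first sentence of the theorem.

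Next I would identify ${\rm gap}(\cL)$, the second-smallest eigenvalue, with $\inf_{k\ge 1}{\rm gap}_k(\cL)$ exactly as in \eqref{eq:gap-gap-k}, where ${\rm gap}_k$ optimizes the Dirichlet form over $\mathscr P_k$ modulo constants. For the \textbf{upper bound} ${\rm gap}(\cL)\le{\rm gap}_{\rm RW}(\cL)$: restricting the variational problem to linear functions $f(\eta)=\sum_x\psi(x)\eta_x$ with $\mu f=\sum_x\psi(x)\pi_x$ and using \eqref{eq:linear} gives $\cD(f)=\cE_{\rm RW}(\psi)$ and $\|f-\mu f\|_{L^2(\mu)}^2$ proportional to $\|\psi-\pi\psi\|_{L^2(\pi)}^2$ (the constant of proportionality being $1/(|\alpha|+1)$, coming from the Dirichlet covariance structure $\mathrm{Cov}_\mu(\eta_x,\eta_y)$); one checks this constant cancels correctly so that the infimum over $\mathscr P_1$ equals ${\rm gap}_{\rm RW}(\cL)$ as defined in \eqref{eq:gap-RW}. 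Hence ${\rm gap}_1(\cL)={\rm gap}_{\rm RW}(\cL)$ and ${\rm gap}(\cL)\le{\rm gap}_1(\cL)$ is immediate.

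For the \textbf{lower bound}, I would argue by cases on which $k$ achieves (or approaches) the infimum. If ${\rm gap}(\cL)={\rm gap}_1(\cL)={\rm gap}_{\rm RW}(\cL)$, then trivially ${\rm gap}(\cL)\ge(1\wedge\gamma(\cL)){\rm gap}_{\rm RW}(\cL)$. Otherwise ${\rm gap}(\cL)$ is an eigenvalue $\lambda$ of $-\cL\restr{\mathscr P_k}$ for some $k\ge 2$ that is \emph{not} an eigenvalue of $-\cL\restr{\mathscr P_{k-1}}$ (if it were, one could descend $k$; here one uses that $\mathscr P_{k-1}\subset\mathscr P_k$ and the eigenspace decomposition, so the smallest "new" eigenvalue at level $k$ is the relevant one). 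Applying Theorem \ref{th:main} to this $\lambda$ — whose real part equals $\lambda$ itself by reality — yields $\lambda\ge\gamma(\cL){\rm gap}_{\rm RW}(\cL)\ge(1\wedge\gamma(\cL)){\rm gap}_{\rm RW}(\cL)$. Combining the cases gives \eqref{eq:main-gap-inequalities}.

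The main obstacle I anticipate is the bookkeeping in the case analysis for the lower bound — specifically, justifying cleanly that ${\rm gap}(\cL)$, if strictly below ${\rm gap}_1(\cL)$, must arise as an eigenvalue of some $-\cL\restr{\mathscr P_k}$ that is genuinely "new" at level $k$ (i.e.\ in $\mathscr P_{k,\dagger}$ in the notation of \eqref{eq:Pk-dagger}), so that Theorem \ref{th:main} applies. This requires knowing that the eigenvalues of $-\cL\restr{\mathscr P_{k-1}}$ are a subset of those of $-\cL\restr{\mathscr P_k}$ (true since $\mathscr P_{k-1}$ is invariant) and that the full $L^2$-spectrum is the union over $k$; then the second-smallest element of the union, if not already present at level $1$, is "new" at whatever level it first appears. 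A secondary technical point is verifying density of $\mathscr P$ in $L^2(\mu)$ and that $\cL\restr{\mathscr P}$ is essentially self-adjoint there, but this is standard for the Dirichlet distribution and can be cited or dispatched quickly.
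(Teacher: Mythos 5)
Your proof is correct and follows essentially the same approach as the paper's: self-adjointness and pure-point spectrum via the invariant, graded polynomial algebra dense in $L^2(\mu)$; the identity ${\rm gap}_1 = {\rm gap}_{\rm RW}$ from the Dirichlet covariance $\mu(\eta_x\eta_y)=\tfrac{1}{|\alpha|+1}\{\car_{x=y}\pi_x+|\alpha|\pi_x\pi_y\}$ (the paper phrases this as exhibiting an $L^2(\pi)$-orthogonal eigenbasis for $L$, which is the same calculation); and the lower bound by applying Theorem~\ref{th:main} at the level $k$ where each nonzero eigenvalue first appears. One small slip: you wrote $\cD(f)=\cE_{\rm RW}(\psi)$, whereas in fact $\cD(f)=\tfrac{1}{|\alpha|+1}\cE_{\rm RW}(\psi)$ — that matching $\tfrac{1}{|\alpha|+1}$ factor is exactly why the constant cancels in the Rayleigh quotient, as you then correctly invoke.
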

The proofs of the last two  theorems are deferred to Section \ref{sec:proof}. In Section \ref{sec:examples}, we apply them to the three models discussed in Theorems \ref{th:gap-beta}--\ref{th:gap-IEM}.
\begin{remark}\label{rem:gap-rw-1}
In this reversible setting,  ${\rm gap}_{\rm RW}$ and ${\rm gap}_1$ (cf.\  \eqref{eq:gap-RW} and \eqref{eq:gap-gap-k-def})  coincide.
\end{remark}

\begin{remark}The lower bound in \eqref{eq:main-gap-inequalities} remains valid under slightly more general assumptions, see Remark \ref{rem:general-assumptions-reversibile}.
\end{remark}

\subsubsection{Aldous' spectral gap identities}\label{sec:aldous-spectral-gap-identity}
Theorem \ref{th:main-rev} provides both upper and lower bounds for ${\rm gap}(\mathcal{L})$ solely in terms of\begin{equation}
	{\rm gap}_{\rm RW}(\mathcal{L})\qquad \text{and}\qquad 1 \wedge \gamma(\mathcal{L})\fstop 
\end{equation}In particular, whenever $\gamma(\mathcal{L}) \ge 1$,  the spectral gap of the stochastic exchange model coincides with that of the corresponding random walk on the same graph. This striking dimensionality reduction holding for any underlying graph $G$ is reminiscent of the celebrated Aldous' spectral gap conjecture---later resolved by Caputo, Liggett, and Richthammer \cite{caputo_proof_2010}---which established a similar identity between the spectral gaps of the interchange process, the symmetric exclusion process, and the associated random walk.

Somewhat surprisingly, such reductions have recently emerged in a number of structurally diverse models, see \cite{hermon_version_2019,quattropani2021mixing,salez2022universality,salez_spectral_2024,kim_sau_spectral_2023,kim_sau_one_2024}. Consequently, identifying conditions that ensure $\gamma(\mathcal{L}) \ge 1$ offers a practical criterion for verifying an analogue of Aldous' spectral gap identity within this broad class of stochastic exchange models reversible with respect to a Dirichlet distribution.
In this setting, the relatively explicit expression of $\gamma(\mathcal{L})$ given in \eqref{eq:gamma-general} allows us to derive, after a simple manipulation, the following effective criterion.

\begin{corollary}\label{cor:aldous-conj}
	In the context of Theorem \ref{th:main-rev}, we have ${\rm gap}(\cL)={\rm gap}_{\rm RW}(\cL)$ as soon as
	\begin{equation}
		\int_{[0,1]^2}\quadre{\beta_{xy}(\dd u,\dd v)+\beta_{yx}(\dd v,\dd u)}\tonde{1-u}\tttonde{2u-1}\ge 0\comma\quad \text{for all distinct}\  x, y \in V\fstop		
	\end{equation}

\end{corollary}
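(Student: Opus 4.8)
The plan is to reduce the corollary to the condition $\gamma(\cL)\ge 1$ and then to analyze the defining expression \eqref{eq:gamma-general} termwise. By Theorem \ref{th:main-rev}, it suffices to show that the stated integral inequality forces $\chi_{xy}+\sigma_{xy}\ge \pi_{xy}$ for every pair $x,y$ with $\pi_{xy}>0$, since then $\gamma(\cL)\ge 1$ and the lower bound in \eqref{eq:main-gap-inequalities} becomes ${\rm gap}(\cL)\ge{\rm gap}_{\rm RW}(\cL)$, matching the upper bound. So the entire content is the pointwise (in $x,y$) algebraic inequality $\chi_{xy}+\sigma_{xy}-\pi_{xy}\ge 0$.

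First I would write out $\pi_{xy}$, $\chi_{xy}$, and $\sigma_{xy}$ as integrals against the common measure $\beta_{xy}(\dd u,\dd v)+\beta_{yx}(\dd v,\dd u)$, using \eqref{eq:dir-form} together with \eqref{eq:rates-rxy} to express $\pi_{xy}=\tfrac12\int[\beta_{xy}+\beta_{yx}](\pi_x(1-u)+\pi_y(1-v))$ — here one uses that the symmetrized measure makes the two contributions to $r_{xy}$ and $r_{yx}$ interchangeable, so that $\pi_x r_{xy}+\pi_y r_{yx}=\int[\beta_{xy}(\dd u,\dd v)+\beta_{yx}(\dd v,\dd u)](\pi_x(1-u)+\pi_y(1-v))$. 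Then $\chi_{xy}+\sigma_{xy}-\pi_{xy}$ becomes a single integral $\int[\beta_{xy}(\dd u,\dd v)+\beta_{yx}(\dd v,\dd u)]\,\Phi(u,v)$ where
\begin{equation}
\Phi(u,v)=\pi_x u(1-u)+\pi_y v(1-v)+\bigl(\pi_x(1-u)-\pi_y(1-v)\bigr)^2-\tfrac12\bigl(\pi_x(1-u)+\pi_y(1-v)\bigr)\fstop
\end{equation}
The remaining task is to massage $\Phi$ into a manifestly nonnegative form modulo the hypothesis. Expanding the square and collecting the terms linear and quadratic in $\pi_x(1-u)$ and $\pi_y(1-v)$, I expect $\Phi$ to reorganize as a sum of a term proportional to $\pi_x(1-u)(2u-1)$ plus $\pi_y(1-v)(2v-1)$ plus a manifestly nonnegative remainder (a perfect square coming from the cross term $(\pi_x(1-u)-\pi_y(1-v))^2$ together with leftover quadratic pieces). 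Concretely, since $u(1-u)-\tfrac12(1-u)=(1-u)(u-\tfrac12)=\tfrac12(1-u)(2u-1)$, the "diagonal" part of $\Phi$ is exactly $\tfrac12\pi_x(1-u)(2u-1)+\tfrac12\pi_y(1-v)(2v-1)$, and the square term is nonnegative on its own; hence $\Phi(u,v)\ge \tfrac12\pi_x(1-u)(2u-1)+\tfrac12\pi_y(1-v)(2v-1)$. Integrating against the symmetric measure and using symmetry in $(x,u)\leftrightarrow(y,v)$, the integral of the right-hand side equals $\int[\beta_{xy}(\dd u,\dd v)+\beta_{yx}(\dd v,\dd u)]\,\pi_x(1-u)(2u-1)$ up to a positive constant, which is nonnegative precisely by the hypothesis of the corollary; here the $\pi_x>0$ factor is harmless.

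The main obstacle — really the only non-routine point — is verifying that the leftover after extracting the $(1-u)(2u-1)$ and $(1-v)(2v-1)$ pieces is genuinely nonnegative, i.e. that the square term $(\pi_x(1-u)-\pi_y(1-v))^2$ absorbs any residual negative quadratic contribution; I expect the bookkeeping to close cleanly because the only quadratic terms in $\Phi$ besides the square are $-\pi_x u^2-\pi_y v^2$ coming from $\pi_x u(1-u)+\pi_y v(1-v)$, and these do not fight the square — one just checks the sign on $[0,1]^2$ directly. A secondary care point is the interchange of the integral and the sum and the fact that the hypothesis is stated for the symmetrized combination $\beta_{xy}(\dd u,\dd v)+\beta_{yx}(\dd v,\dd u)$, matching exactly the measure that appears in $\chi_{xy}$, $\sigma_{xy}$, and $\pi_{xy}$, so no extra symmetrization is lost. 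Once $\Phi\ge\tfrac12\pi_x(1-u)(2u-1)+\tfrac12\pi_y(1-v)(2v-1)$ is established and integrated, the corollary follows immediately from Theorem \ref{th:main-rev}.
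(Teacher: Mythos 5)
Your argument is correct and is precisely the ``simple manipulation'' the paper has in mind: write $\chi_{xy}+\sigma_{xy}-\pi_{xy}$ as a single integral against $\beta_{xy}(\dd u,\dd v)+\beta_{yx}(\dd v,\dd u)$, observe that $u(1-u)-\tfrac12(1-u)=\tfrac12(1-u)(2u-1)$, and conclude $\gamma(\cL)\ge 1$ via Theorem \ref{th:main-rev}. In fact, the decomposition you obtain is an exact identity, $\Phi(u,v)=\tfrac12\pi_x(1-u)(2u-1)+\tfrac12\pi_y(1-v)(2v-1)+\bigl(\pi_x(1-u)-\pi_y(1-v)\bigr)^2$, so the ``leftover quadratic'' concern you raise at the end never materializes---the square term is exactly $\sigma_{xy}$'s integrand and nothing remains to be absorbed.
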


\section{Particle systems,  intertwinings, and hidden parameter models}\label{sec:preliminary}
In this section, we introduce three families of Markov processes, all closely related to the stochastic exchange models described in Section \ref{sec:SEM}: an interacting particle system, its time reversal, and the hidden parameter model. Here, we collect their most relevant properties, as well as duality and intertwining relations, of independent interest and all instrumental to the proof of our main results.
\subsection{Particle systems and forward intertwinings}
\label{sec:particle-system}

Particle systems arise naturally from the stochastic exchange models introduced in Section \ref{sec:SEM}, and turn out to be convenient to work with. Informally, they can be described as follows.

Let $(\eta(t))_{t\ge 0}$ be  a fixed realization of the process with generator $\cL$ in \eqref{eq:gen-general}. Recall that, for any time $t\ge 0$, $\eta(t)$ is a probability distribution on $V$.  Consider $k\ge 1$  particle positions $X_1(0),\ldots, X_k(0)$ on $V$, being i.i.d.\ and such that $X_i(0)\sim \eta(0)\in \Omega$. Given this initial condition, let particles move at each $\eta$-update: assuming that there is a first update  at time $t>0$ involving the vertices $x,y\in V$ and fractions $u,v\in [0,1]$, then set
\begin{equation}
	X_i(t)=\begin{dcases}
		x &\text{if}\ X_i(t^-)=x,\ \text{w/ prob.}\ u\ \vee \text{if}\ X_i(t^-)=y,\ \text{w/ prob.}\ 1-v\comma\\
		y &\text{if}\ X_i(t^-)=y,\ \text{w/ prob.}\ v\ \vee \text{if}\ X_i(t^-)=x,\ \text{w/ prob.}\ 1-u\comma	\\	X_i(t^-)&\text{else}\comma
	\end{dcases}
\end{equation}
and this independently over $i=1,\ldots, k$. This Markovian dynamics clearly generates, at any later time $t>0$, $k$ independent samples from $\eta(t)\in \Omega$. This is true for a given realization of $(\eta(t))_{t\ge 0}$, that is, in a quenched sense. The main point is that the annealed process obtained through an averaging over all updates introduces a dependence over the particles' evolution, although the particle system preserves the Markov property. We now formalize these ideas.

Fix an integer $k\ge 1$.
Recall from \eqref{eq:f-polynomial} the general form of a $k$-degree polynomial $f\in \mathscr P_k$ in the $\eta$-variables. 
Letting ($\N_0\eqdef \set{0,1,2,\ldots}$)
\begin{equation}
	\Xi_k\eqdef \set{\xi=(\xi_x)_{x\in V}\in \N_0^V: |\xi|=k }
\end{equation}
denote the space of unlabeled $k$-particle configurations on $V$, \eqref{eq:f-polynomial} becomes\footnote{We find convenient to work with both labeled and unlabeled versions of the particle systems, depending	on the context. With a slight abuse of notation,  we shall write observables as either $\psi(\xi)$ or $\psi(x_1,\ldots, x_k)$, always meaning that $(x_1,\ldots, x_k)\in V^k\mapsto \psi(x_1,\ldots, x_k)$ is symmetric and defined as $\psi(\delta_{x_1}+\ldots + \delta_{x_k})$.}
\begin{equation}\label{eq:f-psi}
	f(\eta)=k!\sum_{\xi\in \Xi_k} \psi(\xi)\,\prod_{x\in V}\frac{\eta_x^{\xi_x}}{\xi_x!}\comma\qquad \eta\in \Omega\fstop
\end{equation}
We shall encode this relation   between  $\psi\in \C^{\Xi_k}$ and $f\in \mathscr P_k$ as
\begin{equation}\label{eq:map-hat}
\psi\longmapsto \widehat \psi=f\fstop	
\end{equation}
In particular, this map is clearly linear and injective. Moreover, we have, for all $\psi\in \C^{\Xi_k}$, 
\begin{equation}\label{eq:int-relation}
	\cL\widehat \psi = \widehat{L\psi}\comma\end{equation} for some linear operator $L$, which we now identify as a particle 	system generator. In what follows, we shall refer to the identity in \eqref{eq:int-relation} as the forward intertwining relation (with \textquotedblleft forward\textquotedblright\ here used in opposition to the version introduced in Section \ref{sec:HPM} below)
	
\begin{proposition}\label{pr:particle}
	The  operator $L$ satisfying  \eqref{eq:int-relation} is a Markov generator acting on functions $\psi\in \R^{\Xi_k}$ as 
	\begin{equation}
		L\psi(\xi)=\sum_{x,y\in V}\sum_{\zeta\in \Xi_k} L_{xy}(\xi,\zeta)\set{\psi(\zeta)-\psi(\xi)}\comma\qquad \xi \in \Xi_k\comma
	\end{equation}
	where 
	\begin{equation}\label{eq:Lxy}
		L_{xy}(\xi,\zeta)=\int_{[0,1]^2}\beta_{xy}(\dd u,\dd v)\, M_{xy}^{uv}(\xi,\zeta)\comma
	\end{equation}
	and $M_{xy}^{uv}(\xi,\zeta)=0$ if $\xi_z\neq \zeta_z$ for some $z\neq x,y$, else
	\begin{equation}\label{eq:Mxyuv}
		M_{xy}^{uv}(\xi,\zeta)=	 \sum_{j_x=0}^{\xi_x\wedge\zeta_x} \binom{\xi_x}{j_x}\binom{\xi_y}{\zeta_x-j_x} u^{j_x}(1-u)^{\xi_x-j_x} v^{\xi_y-(\zeta_x-j_x)}(1-v)^{\zeta_x-j_x}\comma
	\end{equation}		
with 	the summation running over  $j_x\ge   (\xi_x+\xi_y)-(\xi_y+\zeta_y)$.
\end{proposition}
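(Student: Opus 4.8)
The strategy is to compute $\cL\widehat\psi$ explicitly and read off the coefficients. The starting point is the basic identity $\cL\widehat\psi = \widehat{L\psi}$, which at this stage is a \emph{definition} of the linear map $L$ via the injectivity of $\psi\mapsto\widehat\psi$ (recorded in \eqref{eq:map-hat}--\eqref{eq:int-relation}): a linear operator $\cL$ preserving $\mathscr P_k$, composed with the linear iso onto $\C^{\Xi_k}$, produces \emph{some} linear $L$; the content of the proposition is to identify it. So first I would fix a monomial basis: take $\psi=\delta_\xi$ for $\xi\in\Xi_k$, so that $\widehat{\delta_\xi}(\eta) = k!\prod_{x}\eta_x^{\xi_x}/\xi_x!$ is (a multiple of) the monomial $\eta^\xi$. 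Plugging $f(\eta)=\eta^\xi$ into \eqref{eq:gen-general} and using \eqref{eq:SEM-row-matrix}, only the $x$- and $y$-coordinates change, so
\begin{equation}
	(\eta R_{xy}^{uv})^\xi = \eta^{\xi}\cdot\frac{(u\eta_x+(1-v)\eta_y)^{\xi_x}(( 1-u)\eta_x+v\eta_y)^{\xi_y}}{\eta_x^{\xi_x}\eta_y^{\xi_y}}\fstop
\end{equation}
Expanding the two binomials $(u\eta_x+(1-v)\eta_y)^{\xi_x}$ and $((1-u)\eta_x+v\eta_y)^{\xi_y}$ and collecting, for each target value $\zeta_x$ of the number of particles ending at $x$, the coefficient of $\eta_x^{\zeta_x}\eta_y^{\xi_x+\xi_y-\zeta_x}$, produces exactly the sum over $j_x$ appearing in \eqref{eq:Mxyuv}: $j_x$ is the number of the $\xi_x$ particles at $x$ that stay at $x$ (weight $u$) versus move to $y$ (weight $1-u$), and $\zeta_x-j_x$ is the number of the $\xi_y$ particles at $y$ that move to $x$ (weight $1-v$) versus stay at $y$ (weight $v$); the binomial coefficients $\binom{\xi_x}{j_x}\binom{\xi_y}{\zeta_x-j_x}$ count the ways. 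Matching this with $\widehat{\delta_\xi}$'s normalization $\prod_x \eta_x^{\xi_x}/\xi_x!$ and doing the bookkeeping of factorials converts the binomial-in-$\eta$ expansion into the combinatorial transition kernel $M_{xy}^{uv}(\xi,\zeta)$ acting on $\C^{\Xi_k}$; integrating against $\beta_{xy}$ gives \eqref{eq:Lxy}, and summing over $x,y$ recovers the claimed form of $L\psi(\xi)$ (the diagonal $-\psi(\xi)$ terms come from $f(\eta R_{xy}^{uv})-f(\eta)$ and the fact that $M_{xy}^{uv}(\xi,\cdot)$ is a probability kernel on the relevant slice of $\Xi_k$).

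Next I would check that $L$ is a \emph{Markov} generator, i.e., that for each fixed $x,y,u,v$ the numbers $M_{xy}^{uv}(\xi,\zeta)$ are nonnegative and sum to $1$ over $\zeta$. Nonnegativity is immediate since $u,1-u,v,1-v\in[0,1]$ and the summands are products of such terms with nonnegative binomial coefficients. For the normalization, set $\eta_x=\eta_y=1$ (all other coordinates irrelevant) in the binomial expansion above: the right-hand side becomes $(u+1-v)^{0}\cdots$ — more carefully, $\sum_\zeta M_{xy}^{uv}(\xi,\zeta)$ equals the value at $\eta_x=\eta_y=1$ of $(u\eta_x+(1-v)\eta_y)^{\xi_x}((1-u)\eta_x+v\eta_y)^{\xi_y}/(\eta_x^{\xi_x}\eta_y^{\xi_y})$ reorganized by total mass; since $R^{uv}$ is a stochastic matrix its rows sum to $1$, i.e.\ $u+(1-u)=1$ and $(1-v)+v=1$, which forces the total to be $1$. (Equivalently: the quenched description in the paragraph preceding the proposition exhibits $M_{xy}^{uv}$ as the law of where $k$ independently routed particles land, hence a probability measure by construction; this is the conceptual shortcut, but I would still include the algebraic check for a self-contained proof.) Combined with well-definedness on $\mathscr P_k$ from Proposition \ref{pr:invariance-polynomials} (which guarantees the $\beta_{xy}$-integral in \eqref{eq:Lxy} converges — via the same telescoping bound $\ttabs{(\eta R_{xy}^{uv})_{x_1}\cdots-\eta_{x_1}\cdots}\le k\{(1-u)+(1-v)\}$ used there), this shows $L$ generates a Feller (indeed finite-state, once we restrict to a fixed connected component, or at worst countable-state) Markov process on $\Xi_k$.

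The only genuinely delicate point is the \textbf{combinatorial matching of normalizations}: the factor $k!\prod_x 1/\xi_x!$ in \eqref{eq:f-psi} must be carried correctly through the binomial expansion so that the operator on \emph{coefficients} $\psi$ — rather than on monomials — comes out with the clean kernel \eqref{eq:Mxyuv} and no stray factorials. Concretely, when I expand $\widehat{\delta_\xi}(\eta R_{xy}^{uv})$ and re-express it in the basis $\{\widehat{\delta_\zeta}\}$, I get coefficients of the form $\binom{\xi_x}{j_x}\binom{\xi_y}{\zeta_x-j_x}\cdot\frac{\zeta_x!\,\zeta_y!}{\xi_x!\,\xi_y!}\cdot u^{\cdots}(1-u)^{\cdots}v^{\cdots}(1-v)^{\cdots}$ inside the sum; the claim is that after the change of basis from "coefficient of monomial" to "value $\psi(\zeta)$" these extra $\zeta!/\xi!$ ratios are exactly absorbed, leaving precisely $M_{xy}^{uv}(\xi,\zeta)$. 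I would verify this once on a small case ($k=2$) to pin down the conventions and then present the general identity; the summation constraint $j_x\ge(\xi_x+\xi_y)-(\xi_y+\zeta_y)=\xi_x-\zeta_y$ (equivalently $\zeta_x-j_x\le\xi_y$ and $j_x\ge 0$, $j_x\le\xi_x$, $j_x\le\zeta_x$) is just the requirement that all four binomial coefficients are nonzero, i.e.\ that the particle counts are consistent. Everything else is routine linearity and the standard Feller-process bookkeeping already invoked in Proposition \ref{pr:invariance-polynomials}.
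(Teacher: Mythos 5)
Your overall plan follows the same route as the paper's proof: expand a monomial in $\eta R_{xy}^{uv}$ via the binomial theorem, read off the kernel $M_{xy}^{uv}$, and identify $L$; the only structural difference is that the paper first establishes the Markov property abstractly (via $L1=0$ together with the minimum principle, both consequences of the intertwining with the Feller generator $\cL$) and only then records the rates, whereas you derive the explicit form first and then invoke stochasticity of $M$. Both orders are fine, and your probabilistic shortcut for the normalization is legitimate.

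There is, however, a genuine slip in the index bookkeeping that would fail the small-case check you propose. The coefficient of $\eta^\zeta$ in $(\eta R_{xy}^{uv})^\xi$ is, by direct expansion, $\sum_{j}\binom{\xi_x}{j}\binom{\xi_y}{\zeta_x-j}\,u^{j}(1-v)^{\xi_x-j}(1-u)^{\zeta_x-j}v^{\xi_y-\zeta_x+j}$: note that the exponents on $(1-u)$ and $(1-v)$ are \emph{swapped} relative to \eqref{eq:Mxyuv}, so this is \emph{not} "exactly the sum over $j_x$ appearing in \eqref{eq:Mxyuv}". After multiplying by $\zeta_x!\zeta_y!/(\xi_x!\xi_y!)$, this coefficient equals $M_{xy}^{uv}(\zeta,\xi)$, not $M_{xy}^{uv}(\xi,\zeta)$ as you claim. (Check $\xi=(2,0)$, $\zeta=(1,1)$ on a two-site graph: the normalized coefficient is $u(1-v)$, whereas $M_{xy}^{uv}(\xi,\zeta)=2u(1-u)$.) Your verbal description — $j_x$ particles stay at $x$ with weight $u$, the rest move to $y$ with weight $1-u$, etc. — correctly describes $M_{xy}^{uv}(\xi,\zeta)$, but it does not match the algebra of your own expansion, in which the weight on $\eta_y$ inside $(u\eta_x+(1-v)\eta_y)^{\xi_x}$ is $1-v$, not $1-u$. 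This is the row-versus-column duality: by fixing $\widehat{\delta_\xi}$ and reading off the $\widehat{\delta_\zeta}$-coefficient you are computing $L\delta_\xi(\zeta)=L(\zeta,\xi)$, the $(\zeta,\xi)$ matrix entry, so the quantity naturally appearing is $M_{xy}^{uv}(\zeta,\xi)$. The proposition's formula for $L\psi(\xi)$ can still be recovered after relabelling (and using row-stochasticity of $M$), but as stated your identity for the coefficient is incorrect. The paper avoids this entirely by expanding $\prod_z ((\eta R_{xy}^{uv})_z)^{\zeta_z}/\zeta_z!$ in the basis $\prod_z\eta_z^{\xi_z}/\xi_z!$ — i.e.\ $\zeta$ in the exponent, $\xi$ as the summation index — which produces $M_{xy}^{uv}(\xi,\zeta)$ directly with no swap; you should adopt the same convention.
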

\begin{proof}
	First of all, we show that $L$ is an infinitesimal generator on $\R^{\Xi_k}$, that is,   
	\begin{equation}\label{eq:cond-generator}
		L1=0\qquad\text{and}\qquad \psi(\xi)=\min_{\zeta\in \Xi_k}\psi(\zeta)\ \Rightarrow L\psi(\xi)\ge 0\fstop
	\end{equation}
	Observe that  \eqref{eq:map-hat} maps constants into constants; more precisely,  $\widehat 1=1$. Hence, by $\cL1=0$ and \eqref{eq:int-relation}, we get  $0=\cL\widehat 1=\widehat{L1}$, and the first claim in \eqref{eq:cond-generator} follows by injectivity of the map in \eqref{eq:map-hat}. The second claim in \eqref{eq:cond-generator} is a  consequence of the first one and the intertwining relation in \eqref{eq:int-relation} with $\cL$.
	
	The rest of the proof for establishing the precise form of the rates is a computation, which we quickly 	sketch. For all $x,y\in V$, $u,v\in [0,1]$, and $\zeta \in \Xi_k$, we have
	\begin{equation}
		\prod_{z\in V} \frac{((\eta R_{xy}^{uv})_z)^{\zeta_z}}{\zeta_z!} = \sum_{\xi\in \Xi_k} \tonde{\prod_{z\in V}\frac{\eta_z^{\xi_z}}{\xi_z!}} M_{xy}^{uv}(\xi,\zeta)\comma
	\end{equation}
	where $M_{xy}^{uv}(\xi,\zeta)$ is given in \eqref{eq:Mxyuv}. 	Hence, $\cL \widehat \psi(\eta)$ equals $k!$ times
	\begin{align}
		& \sum_{x,y\in V}\sum_{\zeta\in \Xi_k} \psi(\zeta) \int_{[0,1]^2}\beta_{xy}(\dd u,\dd v)\bigg\{\prod_{z\in V}\frac{((\eta R_{xy}^{uv})_z)^{\zeta_z}}{\zeta_z!}-\prod_{z\in V}\frac{\eta_z^{\zeta_z}}{\zeta_z!}\bigg\}\\
		=\,& \sum_{x,y\in V} \sum_{\xi\in \Xi_k}\bigg\{\sum_{\substack{\zeta \in \Xi_k\\
					\zeta \neq \xi}}L_{xy}(\xi,\zeta)\, \psi(\zeta)-\int_{[0,1]^2}\beta_{xy}(\dd u,\dd v)\tonde{1-M_{xy}^{uv}(\xi,\xi)}\psi(\xi)\bigg\}\bigg(\prod_{z\in V}\frac{\eta_z^{\xi_z}}{\xi_z!}\bigg)\comma
	\end{align}
	and, by \eqref{eq:int-relation}, the expression in curly brackets on the right-hand side equals $L\psi(\xi)$.	
\end{proof}

In the next lemma, we show that, thanks to our Assumptions \ref{ass:beta} and \ref{ass:irr},  the particle system found in Proposition \ref{pr:particle} is irreducible on $\Xi_k$. 
\begin{lemma}\label{lem:irreducible-particle-system}
	$L$ describes an irreducible Markov chain on $\Xi_k$. 
\end{lemma}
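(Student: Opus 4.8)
The goal is to show that the particle system generator $L$ from Proposition~\ref{pr:particle} is irreducible on $\Xi_k$, i.e., for any two configurations $\xi,\zeta\in\Xi_k$ there is a sequence of allowed transitions connecting $\xi$ to $\zeta$. My strategy is to reduce irreducibility on $\Xi_k$ to a connectivity statement for single particles, and then amplify it to $k$ particles by moving them one at a time. First I would extract from Proposition~\ref{pr:particle} the structure of which single-edge moves have positive rate. The natural quantity to look at is $s_{xy}$ from \eqref{eq:rates-sxy}: I claim that whenever $s_{xy}>0$, the transition that moves exactly one particle from $x$ to $y$ (leaving all other coordinates fixed) has positive rate under $L$. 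Indeed, consider $\xi$ with $\xi_x\ge 1$ and let $\zeta=\xi-\delta_x+\delta_y$; then by \eqref{eq:Lxy}--\eqref{eq:Mxyuv}, the rate $L_{xy}(\xi,\zeta)$ picks up (among others) the term in $M_{xy}^{uv}(\xi,\zeta)$ with $j_x=\xi_x-1$, which contributes a factor proportional to $u^{\xi_x-1}(1-u)\,v^{\xi_y}$, integrated against $\beta_{xy}(\dd u,\dd v)+\beta_{yx}(\dd v,\dd u)$; combined with the symmetric contribution moving a particle the other way, positivity of $s_{xy}$ (which integrates $u(1-u)$ against the symmetrized measure) forces this integral to be strictly positive. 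So a single particle at $x$ can hop to $y$ at positive rate as soon as $s_{xy}>0$.

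Next I would invoke Assumption~\ref{ass:irr}: for every pair $x,y\in V$ there is a path $x=x_0,x_1,\dots,x_\ell,x_{\ell+1}=y$ with all consecutive $s$-rates positive. Composing the single-particle moves along such a path, a lone particle can travel from any vertex to any other vertex through a sequence of positive-rate transitions of $L$ (each such move is a legal single-edge update, since moving one of the $k$ particles while the other $k-1$ sit still is exactly the $\zeta_x=j_x+1$, $\zeta_y=\xi_y$-type transition, which remains admissible regardless of how many particles occupy the other sites). The point is that the presence of the other $k-1$ particles does not obstruct this: the rate $L_{xy}(\xi,\zeta)$ for moving a single particle from $x$ to $y$ is bounded below by the same $\beta$-integral regardless of $\xi_y$, since we can always choose the configuration of the remaining $\xi_y$ particles at $y$ to stay put (the $j_x=\xi_x-1$, and the particles initially at $y$ all returning to $y$, term).

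Finally I would assemble the $k$-particle result. Given $\xi,\zeta\in\Xi_k$, enumerate the particles of $\xi$ and of $\zeta$ arbitrarily and pair them up; then move the first particle of $\xi$ along a positive-rate path to the location of the first particle of $\zeta$, then the second, and so on. Since each such one-particle relocation is realized by a finite chain of positive-rate $L$-transitions (by the previous paragraph), the concatenation gives a positive-rate path from $\xi$ to $\zeta$ in $\Xi_k$. This proves irreducibility. The main obstacle — and the only place requiring genuine care — is the first step: verifying from the explicit combinatorial formula \eqref{eq:Mxyuv} that $s_{xy}>0$ really does imply the single-particle hop has positive $L$-rate, and in particular correctly identifying the term in the sum over $j_x$ that produces the $u(1-u)$ weight matching the definition of $s_{xy}$ in \eqref{eq:rates-sxy}. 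Everything after that is a straightforward path-concatenation argument.
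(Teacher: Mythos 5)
Your proposal follows the paper's structure at a high level (reduce to edge-level moves, then concatenate along $s$-positive paths), but the key technical claim you flag as "requiring genuine care" is in fact false as stated, and this is precisely the obstacle the paper's proof is designed to circumvent.

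You assert that $s_{xy}>0$ forces the \emph{direct} single-particle hop $\xi\mapsto\xi-\delta_x+\delta_y$ to have positive rate, via the term
\begin{equation}
\tonde{L_{xy}+L_{yx}}\ttonde{\xi,\xi-\delta_x+\delta_y}\;\ge\;\xi_x\int_{[0,1]^2}\quadre{\beta_{xy}(\dd u,\dd v)+\beta_{yx}(\dd v,\dd u)}\,u^{\xi_x-1}(1-u)\,v^{\xi_y}\fstop
\end{equation}
But $s_{xy}>0$ only guarantees that the symmetrized measure $\mu_{xy}\eqdef \beta_{xy}(\dd u,\dd v)+\beta_{yx}(\dd v,\dd u)$ has mass on $\set{u\in(0,1)}$; it says nothing about the $v$-marginal, and the factor $v^{\xi_y}$ annihilates the integral whenever $\xi_y\ge 1$ and $\mu_{xy}$ is supported on $\set{v=0}$. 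A concrete admissible counterexample is $\beta_{xy}=\delta_{(1/2,\,0)}$, $\beta_{yx}\equiv 0$: then $s_{xy}=\tfrac14>0$ and Assumptions \ref{ass:beta}, \ref{ass:irr} can be satisfied, yet $L_{xy}(\xi,\xi-\delta_x+\delta_y)=0$ for every $\xi$ with $\xi_y\ge 1$, since every summand in \eqref{eq:Mxyuv} carries a positive power of $v$. Your subsequent path-concatenation step inherits the same problem: moving particles one at a time will at some point require entering an already-occupied vertex, and you cannot in general route around this (think of two particles that must swap across an edge on a line graph). So the statement "the presence of the other $k-1$ particles does not obstruct this" is exactly where the argument breaks.

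The paper's proof handles this by allowing an \emph{indirect} two-step route through the empty-arrival-vertex configuration $\xi+\xi_y\delta_x-\xi_y\delta_y$: the first leg has rate bounded below by $\int\mu_{xy}\,u^{\xi_x}(1-v)^{\xi_y}$ (which needs $v<1$, not $v>0$), and the second leg has rate bounded below by $\int\mu_{xy}\,u^{\xi_x+\xi_y}(1-u)^{\ell_x+\xi_y}$ (always positive when $s_{xy}>0$). The dichotomy $\set{v>0}$ vs.\ $\set{v<1}$ covers all cases, which is exactly why the direct rate and the first indirect rate cannot both vanish on $\set{u\in(0,1)}$. You would need to add this alternative route to make the argument go through; note that this works for all the paper's examples (KMP, HP, IEM) but is genuinely needed to prove the lemma under the stated, fully general assumptions.
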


\begin{proof}
	By Assumption \ref{ass:irr} and Remark \ref{rem:irr}, the claim is true for $k=1$; indeed, $r_{xy}=L_{xy}(\delta_x,\delta_y)+L_{yx}(\delta_x,\delta_y)$. For $k\ge 2$, it suffices to prove that, for all $x,y\in V$ satisfying $r_{xy}>0$, for all $\xi\in \Xi_k$ with $\xi_x\ge 1$ and $\ell_x\in \set{1,\ldots, \xi_x}$, there is a positive rate to jump either \textit{directly} from $\xi$ to $\xi-\ell_x\delta_x+\ell_x\delta_y$, or \textit{indirectly}, by passing  through the \textquotedblleft empty-arrival-vertex\textquotedblright\ configuration $\xi+\xi_y\delta_x-\xi_y\delta_y\in \Xi_k$. 
	
	Recall \eqref{eq:Lxy}--\eqref{eq:Mxyuv}. The rate of the first scenario is bounded below as follows
	\begin{align}
	&\tonde{L_{xy}+L_{yx}}(\xi,\xi-\ell_x\delta_x+\ell_x\delta_y)	\eqdef 	L_{xy}(\xi,\xi-\ell_x\delta_x+\ell_x\delta_y)+L_{yx}(\xi,\xi-\ell_x\delta_x+\ell_x\delta_y)\\
		&\quad\ge \int_{[0,1]^2}\quadre{\beta_{xy}(\dd u,\dd v)+\beta_{yx}(\dd v,\dd u)} u^{\xi_x}\tonde{1-u}^{\ell_x}v^{\xi_y}  \fstop
	\end{align}
	The rate of the first step of the second scenario is bounded below as follows
	\begin{align}
		(L_{xy}+L_{yx})(\xi,\xi+\xi_y\delta_x-\xi_y\delta_y)\ge \int_{[0,1]^2}\quadre{\beta_{xy}(\dd u,\dd v)+\beta_{yx}(\dd v,\dd u)}u^{\xi_x}\tonde{1-v}^{\xi_y}\fstop
	\end{align}
	The rate of the second step of the second scenario is readily estimated as done in the first scenario, with $\xi_y=0$ therein:
	\begin{align}
		&\tonde{L_{xy}+L_{yx}}(\xi+\xi_y\delta_x-\xi_y\delta_y,\xi-\ell_x\delta_x+\ell_x\delta_y)\\
		&\quad\ge \int_{[0,1]^2}\quadre{\beta_{xy}(\dd u,\dd v)+\beta_{yx}(\dd v,\dd u)} u^{\xi_x+\xi_y}\tonde{1-u}^{\ell_x+\xi_y}\fstop
	\end{align}
	
We are done, because,  if the right-hand side of the first display is zero, then Assumption \ref{ass:irr} ensures that those of the last two displays must be positive.\end{proof}
Recall that, by Assumption \ref{ass:beta} and Proposition \ref{pr:invariance-polynomials}, $\cL$ admits at least one invariant measure. 
If $\mu \in \cP(\Omega)$ denotes such a measure, then $\widehat \mu_k \in \cP(\Xi_k)$---defined via dual pairing  $\widehat \mu_k \psi = \mu \widehat \psi$ with functions $\psi \in \C^{\Xi_k}$---is invariant for $L$. Indeed,  $\mu \cL=0$ on $\mathscr P$ implies  $\widehat \mu_k L \psi = \mu \widehat{L\psi}=\mu \cL \widehat \psi=0$ for all $\psi \in \C^{\Xi_k}$. These considerations and Lemma \ref{lem:irreducible-particle-system} together prove the following result and, thus, Proposition \ref{pr:unique}.

\begin{corollary}\label{cor:unique-inv}Both $\cL$ and $L$ admit a unique invariant measure.
\end{corollary}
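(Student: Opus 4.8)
The statement to prove is Corollary~\ref{cor:unique-inv}: both $\cL$ and $L$ admit a unique invariant measure. The plan is to assemble the pieces already laid out in the excerpt rather than to start from scratch.

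First I would recall that existence is already free: $\Omega$ is compact and $\cL\restr{\mathscr P}$ pre-generates a Feller process by Proposition~\ref{pr:invariance-polynomials}, so a standard Krylov--Bogolyubov argument gives at least one invariant $\mu\in\cP(\Omega)$. Likewise, for each fixed $k\ge 1$, $L$ is the generator of an irreducible continuous-time Markov chain on the \emph{finite} state space $\Xi_k$ by Proposition~\ref{pr:particle} and Lemma~\ref{lem:irreducible-particle-system}, hence it has a unique invariant probability measure $\nu_k\in\cP(\Xi_k)$ (finite irreducible chains have a one-dimensional kernel of $L^{T}$). So the only real content is uniqueness for $\cL$, and the bridge between the two statements.

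The key step is the transfer of invariance through the intertwining. Given any $\cL$-invariant $\mu\in\cP(\Omega)$, define $\widehat\mu_k\in\cP(\Xi_k)$ by the dual pairing $\widehat\mu_k\psi\eqdef\mu\widehat\psi$ for $\psi\in\C^{\Xi_k}$; this is a genuine probability measure on $\Xi_k$ because $\widehat 1=1$ and $\widehat\psi\ge 0$ whenever $\psi\ge 0$ (the map in \eqref{eq:f-psi} sends nonnegative coefficient arrays to nonnegative polynomials on $\Omega$). Using the forward intertwining \eqref{eq:int-relation}, for every $\psi\in\C^{\Xi_k}$ we get $\widehat\mu_k(L\psi)=\mu(\widehat{L\psi})=\mu(\cL\widehat\psi)=0$, so $\widehat\mu_k$ is $L$-invariant, hence $\widehat\mu_k=\nu_k$ by the finite-chain uniqueness above. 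This holds for \emph{every} $k\ge 1$, so $\mu\widehat\psi=\nu_k\psi$ is determined for all $\psi$ and all $k$; but $\bigcup_{k\ge 0}\mathscr P_k=\mathscr P$ is the algebra of all polynomials on $\Omega$, which is dense in $C(\Omega)$ by Stone--Weierstrass (it separates points and contains constants). Therefore $\mu$ is uniquely determined as a functional on $C(\Omega)$, i.e.\ $\cL$ has a unique invariant measure. Running the same identity in reverse —- from the unique $\nu_k$ one recovers a consistent family of moments which, by compactness of $\Omega$, corresponds to a unique $\mu$ —- also shows $L$'s invariant measure is exactly $\widehat\mu_k$, tying the two uniqueness statements together.

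The main obstacle, such as it is, is purely bookkeeping: one must check that $\widehat\mu_k$ really is a probability measure (nonnegativity on $\Xi_k$ from nonnegativity of the monomial-basis coefficients, and total mass one from $\widehat 1 = 1$ plus $\widehat\mu_k$ being defined by pairing against the indicator-type functions $\psi=\setone_\xi$), and that the density of $\mathscr P$ in $C(\Omega)$ upgrades ``$\mu$ agrees with $\nu_k$ on all polynomials'' to ``$\mu$ is unique''. There is no delicate analysis here —- the real work was already done in Lemma~\ref{lem:irreducible-particle-system} (irreducibility on $\Xi_k$) and Proposition~\ref{pr:particle} (the intertwining); the corollary is the harvest.
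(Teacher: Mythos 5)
Your proof is correct and follows essentially the same route as the paper: existence of an invariant $\mu$ for $\cL$ is free by compactness, $\widehat\mu_k$ is $L$-invariant via the forward intertwining, irreducibility on the finite space $\Xi_k$ (Lemma~\ref{lem:irreducible-particle-system}) pins $\widehat\mu_k$ down uniquely for every $k$, and density of $\mathscr P$ in $C(\Omega)$ then determines $\mu$. You are a bit more explicit than the paper in checking that $\widehat\mu_k$ is actually a probability measure and in naming Stone--Weierstrass, but the argument is the same.
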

\begin{remark}
By definition, we have
	\begin{equation}
		\widehat \mu_k(\xi)= k!\int_\Omega \tonde{\prod_{x\in V}\frac{\eta_x^{\xi_x}}{\xi_x!}}\mu(\dd \eta)\comma\qquad \xi \in \Xi_k\fstop
\end{equation}
In particular, $\widehat \mu_1(\delta_x)=\pi_x$, $x\in V$, where $\pi=(\pi_x)_{x\in V}$ is given in Remark \ref{rem:irr}.
\end{remark}

\subsection{Annihilation and creation operators}
Fix an integer $k\ge 1$,  and introduce the (particle) annihilation operator $\mathfrak a: \C^{\Xi_{k-1}}\to \C^{\Xi_k}$ (conventionally, $\Xi_0\equiv \set{\emptyset}$ and $\C^{\Xi_0}\cong \C$), given by
\begin{equation}\label{eq:annhilation}
	\mathfrak a\psi(\xi)\eqdef \sum_{x\in V}\xi_x\,\psi(\xi-\delta_x)\comma\qquad \xi\in \Xi_k\comma \psi\in \C^{\Xi_{k-1}}\fstop
\end{equation}
Up to a normalization factor, $\mathfrak a \psi(\xi)$ is the average of the function $\psi$ after removing a particle uniformly at random from $\xi$. 

Our next result states that the particle system with generator $L$ commutes with this operation: removing a particle at the start and then evolving is the same (in law) as first evolving and then removing.
\begin{proposition}\label{pr:annihilation}
	$\mathfrak a L = L \mathfrak a$.
\end{proposition}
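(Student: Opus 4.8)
The plan is to establish the operator identity $\mathfrak a L = L \mathfrak a$ by relating it, through the intertwining map $\psi \mapsto \widehat\psi$ of \eqref{eq:map-hat}, to a differential-type identity for $\cL$ on polynomials. The key observation is that the annihilation operator $\mathfrak a$ has a transparent meaning on the polynomial side. If $\psi \in \C^{\Xi_{k-1}}$, then a direct comparison of \eqref{eq:f-psi} for degree $k-1$ with \eqref{eq:f-psi} for degree $k$ shows that $\widehat{\mathfrak a \psi}(\eta)$ equals (up to a harmless factor of $k$) the ``Euler-type'' lift of $\widehat\psi$, namely $\widehat{\mathfrak a \psi}(\eta) = \sum_{x\in V}\eta_x \,\partial_{x}\big(\text{extension of }\widehat\psi\big)$, or more cleanly: if $p$ is the homogeneous degree-$(k-1)$ polynomial representing $\widehat\psi$, then $\widehat{\mathfrak a\psi}$ is the degree-$k$ polynomial obtained by multiplying each monomial of $p$ by $|\eta| = 1$ and re-expanding — equivalently, viewing $\widehat{\mathfrak a \psi}$ and $\widehat\psi$ as the \emph{same} function on $\Omega$. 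Concretely, I claim $\widehat{\mathfrak a \psi} = k\,\widehat\psi$ as functions on $\Omega$ (the homogenization/degree-raising is invisible on $\Omega$ because $|\eta|=1$). Granting this, the result is immediate: by the forward intertwining \eqref{eq:int-relation}, $\widehat{\mathfrak a L \psi} = k\, \widehat{L\psi} = k\,\cL\widehat\psi = \cL(k\widehat\psi) = \cL \widehat{\mathfrak a \psi} = \widehat{L \mathfrak a \psi}$, and since $\psi\mapsto\widehat\psi$ is injective on $\C^{\Xi_k}$, we conclude $\mathfrak a L \psi = L \mathfrak a \psi$ for all $\psi$.

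First I would make the claim $\widehat{\mathfrak a \psi} = k\,\widehat\psi$ precise and prove it. From \eqref{eq:f-psi}, $\widehat\psi(\eta) = (k-1)!\sum_{\zeta \in \Xi_{k-1}}\psi(\zeta)\prod_x \eta_x^{\zeta_x}/\zeta_x!$, while $\widehat{\mathfrak a\psi}(\eta) = k!\sum_{\xi\in\Xi_k}\big(\sum_x \xi_x \psi(\xi-\delta_x)\big)\prod_x \eta_x^{\xi_x}/\xi_x!$. I would swap the order of summation in the latter, writing $\xi = \zeta + \delta_x$ with $\zeta \in \Xi_{k-1}$; the combinatorial factor $\xi_x = \zeta_x + 1$ together with $1/\xi_x! = 1/((\zeta_x+1)\zeta_x!)$ telescopes, leaving $\sum_x \eta_x \cdot \prod_x \eta_x^{\zeta_x}/\zeta_x!$, and then $\sum_x \eta_x = |\eta| = 1$ on $\Omega$ kills that factor. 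Tracking the factorials ($k!$ versus $(k-1)!$) gives exactly the factor $k$. This is the only genuine computation and it is short; I'd present it as a one-line lemma or fold it directly into the proof.

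The main (minor) obstacle is purely bookkeeping: being careful that the identity $\widehat{\mathfrak a\psi}=k\widehat\psi$ holds \emph{as functions on $\Omega$} and not as formal polynomials — the two polynomials $\mathfrak a\psi$-lift and $k\psi$-lift differ as elements of $\R[\eta]$ (one is degree $k$, the other degree $k-1$) but agree on the simplex, which is all that is needed since $\cL$ acts on functions on $\Omega$ and the map $\psi\mapsto\widehat\psi$ is defined via restriction to $\Omega$. One should also note that the injectivity invoked at the end is injectivity of $\psi \in \C^{\Xi_k} \mapsto \widehat\psi \in \mathscr P_k$, which was recorded right after \eqref{eq:map-hat}; applying it requires that $\mathfrak a L\psi$ and $L\mathfrak a\psi$ both live in $\C^{\Xi_k}$, which holds because $L$ preserves $\C^{\Xi_k}$ and $\mathfrak a: \C^{\Xi_{k-1}} \to \C^{\Xi_k}$. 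Everything else is formal manipulation, so I expect the proof to be about a paragraph long once the degree-raising lemma is in hand.
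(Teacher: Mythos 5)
Your proof is correct, and it takes a genuinely different route from the paper's. The paper only sketches the argument, pointing to two alternatives: a direct (but lengthy) computation with the explicit rates in \eqref{eq:Lxy}--\eqref{eq:Mxyuv}, or a probabilistic \emph{consistency} argument---any $k-1$ of the $k$ quenched particles remains Markovian with the $(k-1)$-particle law. You instead combine the forward intertwining \eqref{eq:int-relation} with the observation that, on $\Omega$, $\widehat{\mathfrak a\psi}=k\,\widehat\psi$: under the linear isomorphism $\psi\mapsto\widehat\psi$ from $\C^{\Xi_k}$ onto $\mathscr P_k$, the operator $\mathfrak a$ is (up to the scalar $k$) the inclusion $\mathscr P_{k-1}\hookrightarrow\mathscr P_k$, while $L$ becomes $\cL$; the commutation $\mathfrak a L = L\mathfrak a$ is then simply the consistency of $\cL$ across the nested filtration $\mathscr P_{k-1}\subset\mathscr P_k$. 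This yields a short, self-contained, purely algebraic proof, and as a by-product also explains Remark \ref{rem:subspace-invariant}. Two small points. First, the ``Euler-type lift'' phrasing is a red herring (Euler's identity for a homogeneous degree-$(k-1)$ polynomial would produce a factor $k-1$, not $k$), but you discard it immediately for the correct picture---homogenization by multiplying with $|\eta|=1$---which you then verify cleanly. Second, you correctly stress that the identity $\widehat{\mathfrak a\psi}=k\,\widehat\psi$ holds as functions on $\Omega$ rather than as formal polynomials; this is exactly the level at which $\cL$ is defined and at which the injectivity of $\psi\mapsto\widehat\psi$ into $\mathscr P_k$ is invoked, so the bookkeeping closes without gaps.
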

\begin{proof}[Proof (Sketch)]The proof goes either via a direct (but lengthy) computation, or via the observation that any subset of the $k$ \textquotedblleft quenched\textquotedblright\ particles $X_1(t),\ldots, X_k(t)$ discussed at the beginning of Section \ref{sec:particle-system} remains Markovian and evolves with the correct law. 
For an analogous	 statement, see, e.g., \cite{carinci_consistent_2019} or \cite[Appendix]{kim_sau_spectral_2023}.
\end{proof}
\begin{remark}\label{rem:subspace-invariant}
	Proposition \ref{pr:annihilation} implies that  $\mathfrak a(\C^{\Xi_{k-1}})\subset \C^{\Xi_k}$ is left invariant by $L$. 
\end{remark}

Recall that $\widehat \mu_k\in \cP(\Xi_k)$ stands for the unique invariant measure of $L$. Given $\mathfrak a:\C^{\Xi_{k-1}}\to \C^{\Xi_k}$, we obtain $\mathfrak a^\dagger : \C^{\Xi_k}\to \C^{\Xi_{k-1}}$ as its adjoint  \textquotedblleft creation\textquotedblright\ operator:
\begin{equation}
	\sum_{\zeta\in \Xi_{k-1}}\widehat \mu_{k-1}(\zeta)\,{\mathfrak a^\dagger \psi(\zeta)}\,\varphi(\zeta) = \sum_{\xi\in \Xi_k}\widehat \mu_k(\xi)\, \psi(\xi)\, \mathfrak a\varphi(\xi)\comma\qquad \varphi\in \C^{\Xi_k}\comma \psi\in \C^{\Xi_{k-1}}\fstop
\end{equation}
As an immediate consequence of this definition and Proposition \ref{pr:annihilation}, we have
\begin{equation}
	\mathfrak a^\dagger L^\dagger=L^\dagger\mathfrak a^\dagger\comma
\end{equation}
where $L^\dagger$ corresponds to the $\widehat \mu$-adjoint  generator of $L$. 	

\begin{remark} This time-reversed particle system will be relevant in Section \ref{sec:proof-control}, but 
we will not need its explicit jump rates.
\end{remark}
\subsection{Hidden parameter models and backward intertwinings}\label{sec:HPM}

Given the generator $\cL$ in \eqref{eq:gen-general}, $\mathscr L$ in \eqref{eq:gen-HPM} is the corresponding hidden parameter model's generator. Assumption \ref{ass:beta} and an argument as in the proof of Proposition \ref{pr:invariance-polynomials} guarantees that $\mathscr L$ is a well-defined operator on polynomials on $[0,1]^V$, and uniquely identifies a Feller process $(\theta(t))_{t\ge 0}$ therein.

\begin{remark}
Recall from \eqref{eq:HPM-update-matrix} that a general $\theta$-update reads as 
\begin{equation}
	(\theta_x,\theta_y)\longmapsto (U\theta_x+(1-U)\theta_y,(1-V)\theta_x+V\theta_y)\comma
\end{equation}
at some rate which may depend on $x,y\in V$ and $U,V\in [0,1]$, but not on $\theta\in [0,1]^V$. Hence,  the three basic properties listed below \eqref{eq:HPM-update} remain valid for all hidden parameter models described by $\mathscr L$ in \eqref{eq:gen-HPM}, under Assumptions \ref{ass:beta} and \ref{ass:irr}: mass is not necessarily conserved; the maximum principle holds; all invariant states consist of flat configurations only.
\end{remark}

The next proposition considerably generalizes the duality relations found in \cite{giardina_redig_tol_intertwining_2024} between $\mathscr L$ and $L$.
\begin{proposition}\label{pr:duality}
	Fix $k\ge 1$.
		Then, for all $b\in \R$, $\xi\in \Xi_k$ and $\theta\in [0,1]^V$, we have
	\begin{equation}\label{eq:duality}
		LD_b(\emparg,\theta)(\xi)=\mathscr L D_b(\xi,\emparg)(\theta)\comma\quad\text{with}\  D_b(\xi,\theta)\eqdef \prod_{x\in V}\tonde{\theta_x-b}^{\xi_x}\fstop
	\end{equation}
\end{proposition}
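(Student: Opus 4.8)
The plan is to verify the identity $LD_b(\emparg,\theta)(\xi)=\mathscr L D_b(\xi,\emparg)(\theta)$ by a direct computation, exploiting that both sides are built from the \emph{same} jump data $(\beta_{xy},R_{xy}^{uv})$ but act on two different sets of variables — the particle configuration $\xi$ on the left, the hidden parameters $\theta$ on the right. Since the duality function $D_b(\xi,\theta)=\prod_{x\in V}(\theta_x-b)^{\xi_x}$ factorizes over vertices, and since every update $R_{xy}^{uv}$ only touches the two coordinates $x,y$, the whole problem reduces to a \emph{single-edge, two-site} verification: all the factors $(\theta_z-b)^{\xi_z}$ with $z\neq x,y$ are untouched on both sides and can be divided out. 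So the claim amounts to checking, for each ordered pair $(x,y)$ and each $(u,v)$, the two-variable identity obtained by replacing the pair $((\theta_x-b),(\theta_y-b))$ via the column-action \eqref{eq:HPM-update-matrix} on the $\mathscr L$-side, against the particle-jump kernel $M_{xy}^{uv}$ of Proposition \ref{pr:particle} on the $L$-side.

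\textbf{Key steps.} First I would record the effect of $R_{xy}^{uv}$ on the shifted parameters: writing $\tau_z\eqdef\theta_z-b$, from \eqref{eq:HPM-update-matrix} one gets $(R_{xy}^{uv}\theta)_x-b = u\theta_x+(1-u)\theta_y-b = u\tau_x+(1-u)\tau_y$ and similarly $(R_{xy}^{uv}\theta)_y-b=(1-v)\tau_x+v\tau_y$; crucially the constant $b$ drops out because the rows of $R^{uv}$ sum to $1$, which is exactly why an arbitrary $b\in\R$ is allowed. Next, on the $\mathscr L$-side the relevant two-site factor of $D_b(\xi,\emparg)$ after the update is
\begin{equation}
	\tonde{u\tau_x+(1-u)\tau_y}^{\xi_x}\tonde{(1-v)\tau_x+v\tau_y}^{\xi_y}\fstop
\end{equation}
Then I would expand both binomials using the binomial theorem, collecting the coefficient of $\tau_x^{\zeta_x}\tau_y^{\zeta_y}$ (with $\zeta_x+\zeta_y=\xi_x+\xi_y$): pairing the term $\binom{\xi_x}{j_x}u^{j_x}(1-u)^{\xi_x-j_x}\tau_x^{j_x}\tau_y^{\xi_x-j_x}$ from the first factor with the term $\binom{\xi_y}{\zeta_x-j_x}(1-v)^{\zeta_x-j_x}v^{\xi_y-(\zeta_x-j_x)}\tau_x^{\zeta_x-j_x}\tau_y^{\xi_y-(\zeta_x-j_x)}$ from the second factor produces exactly $\tau_x^{\zeta_x}\tau_y^{\zeta_y}$ with coefficient $M_{xy}^{uv}(\xi,\zeta)$ as in \eqref{eq:Mxyuv}, the summation range on $j_x$ being forced by the nonnegativity of all four exponents. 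On the $L$-side, by definition of the particle rates \eqref{eq:Lxy}–\eqref{eq:Mxyuv}, $L_{xy}$ applied to $\xi\mapsto D_b(\xi,\theta)$ replaces the factor $\tau_x^{\xi_x}\tau_y^{\xi_y}$ by $\sum_{\zeta}M_{xy}^{uv}(\xi,\zeta)\,\tau_x^{\zeta_x}\tau_y^{\zeta_y}$ — the \emph{same} expression. Multiplying by $\beta_{xy}(\dd u,\dd v)$, integrating, summing over $x,y\in V$, and subtracting the common $-D_b(\xi,\theta)$ term yields \eqref{eq:duality}. Strictly speaking I would first do the bookkeeping for a single ordered pair and then note that the sum over $x,y\in V$ in \eqref{eq:gen-general} and \eqref{eq:gen-HPM} runs over the same index set, so the edge-by-edge identity assembles into the full one; one should also remark that the sum over $\zeta$ is finite (supported on $\zeta$ with $\zeta_z=\xi_z$ for $z\neq x,y$ and $\zeta_x+\zeta_y=\xi_x+\xi_y$), so there is no convergence issue, and Assumption \ref{ass:beta} guarantees integrability of the $\beta_{xy}$-integral after the telescoping $f(\eta R_{xy}^{uv})-f(\eta)$ is taken (the constant-$b$ cancellation is what makes the difference, not each term separately, integrable).

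\textbf{Main obstacle.} There is no deep obstacle here — the statement is essentially a generating-function identity — but the one genuinely delicate bookkeeping point is matching the index set of the sum defining $M_{xy}^{uv}(\xi,\zeta)$ in \eqref{eq:Mxyuv} (the constraint $j_x\ge(\xi_x+\xi_y)-(\xi_y+\zeta_y)$, i.e. $j_x\ge\xi_x-\zeta_y$, together with $j_x\le\xi_x\wedge\zeta_x$) with the range of summation that naturally appears when one expands the product of the two binomials and demands that all exponents of $u,1-u,v,1-v$ be nonnegative. I would handle this by writing the exponents explicitly as functions of $j_x$ and $\zeta_x$ and checking that ``all four exponents $\ge 0$'' is literally equivalent to the stated range; this is the only place where a sign-chasing slip could occur. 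A secondary, purely expository, point is to make clear that although individual terms $\tau_x^{j_x}\tau_y^{\xi_x-j_x}$ etc.\ are not of the product form $\prod_z\tau_z^{\zeta_z}$ with $|\zeta|=k$ \emph{as functions of the full vector}, once we reinstate the untouched coordinates $z\neq x,y$ every monomial that appears does correspond to a genuine $\zeta\in\Xi_k$, so the identification with $L$ acting on $\C^{\Xi_k}$ is legitimate.
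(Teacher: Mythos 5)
Your proof is correct, but it takes a genuinely different route from the paper's. The paper first establishes \eqref{eq:duality} for $b=0$ by the same binomial expansion you use (identifying $M_{xy}^{uv}(\xi,\zeta)$ as the coefficient of $\prod_z\theta_z^{\zeta_z}$ in $\prod_z((R_{xy}^{uv}\theta)_z)^{\xi_z}$), and then upgrades to general $b$ via the identity $D_b=\exp(-b\mathfrak a)D_0$ together with the commutation $\mathfrak a L = L\mathfrak a$ from Proposition \ref{pr:annihilation} and the observation that $\exp(-b\mathfrak a)$ acts on the $\xi$-variable while $\mathscr L$ acts on $\theta$. You instead absorb the constant $b$ \emph{before} expanding, by noting that row-stochasticity of $R^{uv}$ gives $(R_{xy}^{uv}\theta)_x-b = u(\theta_x-b)+(1-u)(\theta_y-b)$ and similarly for $y$; the exact same binomial expansion then runs in the shifted variables $\tau_z=\theta_z-b$ and produces $M_{xy}^{uv}(\xi,\zeta)$ directly, for all $b$, in one stroke. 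Your verification of the index range (all four exponents nonnegative $\Leftrightarrow$ $\xi_x\wedge\zeta_x\ge j_x\ge\zeta_x-\xi_y=\xi_x-\zeta_y$) checks out against \eqref{eq:Mxyuv}. Your approach is more self-contained and makes transparent \emph{why} an arbitrary constant shift is admissible; the paper's approach is slightly longer here but gets to reuse the annihilation-operator commutation, which it needs anyway (e.g., in Step 2 of the proof of Lemma \ref{lem:eigenfunction-control}), so the two-step factorization costs it essentially nothing. One minor remark: you attribute integrability of the $\beta_{xy}$-integral to "the constant-$b$ cancellation"; more precisely, it is the telescoping of the \emph{difference} $D_b(\xi,R_{xy}^{uv}\theta)-D_b(\xi,\theta)$ — exactly as in Proposition \ref{pr:invariance-polynomials} — that Assumption \ref{ass:beta} controls, independently of $b$.
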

	\begin{proof}First, we prove the identity for $b=0$, i.e., for $D_0(\xi,\theta)=\prod_{z\in V}\theta_z^{\xi_z}$. Recalling the definition of \eqref{eq:gen-HPM}, we obtain, for all $x,y \in V$, $u,v\in [0,1]$, $\xi\in \Xi_k$ and $\theta \in [0,1]^V$, 
		\begin{equation}
		((R_{xy}^{uv}\theta)_z)^{\xi_z} = \begin{dcases}
			u\theta_x+(1-u)\theta_y &\text{if}\ z=x\\
			(1-v)\theta_x+v\theta_y &\text{if}\ z=y\\
			\theta_z &\text{else}\fstop
		\end{dcases}
		\end{equation}Hence, 
		\begin{equation}
		\prod_{z\in V}((R_{xy}^{uv}\theta)_z)^{\xi_z} = \sum_{\zeta\in \Xi_k} \tonde{\prod_{z\in V}\theta_z^{\xi_z}} M_{xy}^{uv}(\xi,\zeta)\comma
		\end{equation}
		where $M_{xy}^{uv}(\xi,\zeta)$ was given in \eqref{eq:Mxyuv}. Taking integrals with respect to $\beta_{xy}$ and summing over $x,y\in V$, we obtain the desired claim for $b=0$.

		For the general case, $b\neq 0$, first observe that
		\begin{equation}\label{eq:Db-D0}
			D_b(\xi,\theta)= \exp\tonde{-b\mathfrak a} D_0(\emparg,\theta)(\xi)\comma
		\end{equation}
		where $\mathfrak a$ is defined in \eqref{eq:annhilation} (note that the definition of $\mathfrak a$ does not depend on $k$). Indeed, $\mathfrak a =  \sum_{x\in V}\mathfrak a_x$, where $\mathfrak a_x\psi(\xi)\eqdef \xi_x\,\psi(\xi-\delta_x)$. Clearly,  $\mathfrak a_x\mathfrak a_y= \mathfrak a_y\mathfrak a_x$ for all $x,y\in V$. Hence, writing $D_0(\xi,\theta)= \prod_{z\in V}\theta_z^{\xi_z}\defeq \prod_{z\in V}d_0(\xi_z,\theta_z)$, we get
		\begin{align}
			\exp\tonde{-b\mathfrak a} D_0(\emparg,\theta)(\xi)&= \prod_{z\in V}\set{\exp\tonde{-b\mathfrak a_z} d_0(\emparg,\theta_z)(\xi_z)}\\
			&= \prod_{z\in V} \set{\sum_{\ell_z=0}^{\xi_z} \frac{(-b)^{\ell_z}}{\ell_z!} (\mathfrak a_z)^{\ell_z}d_0(\emparg,\theta_z)(\xi_z)}\\
			&=\prod_{z\in V}\set{\sum_{\ell_z=0}^{\xi_z} \frac{(-b)^{\ell_z}}{\ell_z!} \frac{\xi_z!}{(\xi_z-\ell_z)!}\, \theta_z^{\xi_z-\ell_z}}\\
			&= \prod_{z\in V} \tonde{\theta_z-b}^{\xi_z} = D_b(\xi,\theta)\comma
		\end{align}
		thus, proving \eqref{eq:Db-D0}.		The desired claim in \eqref{eq:duality} follows from this relation, the case $b=0$ which we already proved, and Proposition  \ref{pr:annihilation}; indeed, 
		\begin{align}
			L D_b&= L\exp\tonde{-b\mathfrak a} D_0 = \exp\tonde{-b\mathfrak a}LD_0 = \exp\tonde{-b\mathfrak a}\mathscr LD_0 = \mathscr L D_b\comma
		\end{align}
		where the last identity follows because $\exp\tonde{-b\mathfrak a}$ and $\mathscr L$ act on different variables.
	\end{proof}
	
	By the definition of adjoint generator $L^\dagger$ and the above proposition, we get the following \textquotedblleft dual\textquotedblright\ (or backward) intertwining relation between $\mathscr L$ and $L^\dagger$.
	\begin{corollary}\label{cor:intertwining-backward}Fix $k\ge 1$, and define the linear map from $\C^{\Xi_k}$ to polynomials on $[0,1]^V$
		\begin{equation}\label{eq:intertwining-backward}
			\widetilde \psi(\theta)\eqdef \sum_{\xi\in \Xi_k} \widehat \mu_k(\xi)\, \psi(\xi)\tonde{\prod_{x\in V}\theta_x^{\xi_x}}\comma\qquad \theta \in [0,1]^V\comma \psi\in \C^{\Xi_k}\fstop
		\end{equation}
		Then, for all $\psi\in \C^{\Xi_k}$, $\mathscr L\widetilde \psi = \widetilde{L^\dagger\psi}$.
	\end{corollary}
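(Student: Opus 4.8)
The plan is to derive Corollary~\ref{cor:intertwining-backward} as a direct consequence of Proposition~\ref{pr:duality} together with the definition of the adjoint $L^\dagger$ with respect to the invariant measure $\widehat\mu_k$. The starting point is the observation that the map $\psi \mapsto \widetilde\psi$ in \eqref{eq:intertwining-backward} is, up to the weight $\widehat\mu_k$, exactly a pairing of $\psi$ against the duality function $D_0(\emparg,\theta)(\xi) = \prod_{x\in V}\theta_x^{\xi_x}$. That is, $\widetilde\psi(\theta) = \sum_{\xi\in\Xi_k}\widehat\mu_k(\xi)\,\psi(\xi)\,D_0(\xi,\theta) = \langle \psi, D_0(\emparg,\theta)\rangle_{\widehat\mu_k}$, where $\langle\cdot,\cdot\rangle_{\widehat\mu_k}$ denotes the inner product on $\C^{\Xi_k}$ weighted by $\widehat\mu_k$.

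First I would fix $\theta\in[0,1]^V$ and apply $\mathscr L$ (acting in the $\theta$-variable) to $\widetilde\psi$. Since $\mathscr L$ acts only on $\theta$ and the sum over $\xi$ is finite, it passes through the sum: $\mathscr L\widetilde\psi(\theta) = \sum_{\xi}\widehat\mu_k(\xi)\,\psi(\xi)\,\mathscr L D_0(\xi,\emparg)(\theta)$. Now I invoke Proposition~\ref{pr:duality} with $b=0$, which gives $\mathscr L D_0(\xi,\emparg)(\theta) = L D_0(\emparg,\theta)(\xi)$, i.e., the operator $L$ acting in the $\xi$-variable on the function $\xi\mapsto D_0(\xi,\theta)$. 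Substituting, $\mathscr L\widetilde\psi(\theta) = \sum_{\xi}\widehat\mu_k(\xi)\,\psi(\xi)\,\big(L D_0(\emparg,\theta)\big)(\xi) = \langle \psi, L D_0(\emparg,\theta)\rangle_{\widehat\mu_k}$.

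Next I would move $L$ onto $\psi$ using the adjoint relation: by definition of $L^\dagger$ as the $\widehat\mu_k$-adjoint of $L$, one has $\langle \psi, L\varphi\rangle_{\widehat\mu_k} = \langle L^\dagger\psi, \varphi\rangle_{\widehat\mu_k}$ for all $\varphi\in\C^{\Xi_k}$; applying this with $\varphi = D_0(\emparg,\theta)$ yields $\mathscr L\widetilde\psi(\theta) = \langle L^\dagger\psi, D_0(\emparg,\theta)\rangle_{\widehat\mu_k} = \sum_{\xi}\widehat\mu_k(\xi)\,(L^\dagger\psi)(\xi)\,\prod_{x\in V}\theta_x^{\xi_x}$. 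The right-hand side is precisely $\widetilde{L^\dagger\psi}(\theta)$ by the definition \eqref{eq:intertwining-backward}, which completes the argument. Since $\theta$ was arbitrary, the identity $\mathscr L\widetilde\psi = \widetilde{L^\dagger\psi}$ holds as functions on $[0,1]^V$.

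I do not anticipate a serious obstacle here, as all the substantial work has already been done in Proposition~\ref{pr:duality} (the $b=0$ duality) and in the existence/uniqueness of the invariant measure $\widehat\mu_k$ from Corollary~\ref{cor:unique-inv}, which is what makes the adjoint $L^\dagger$ well-defined on the finite set $\Xi_k$. The only minor point to be careful about is bookkeeping: confirming that $\mathscr L$ genuinely commutes past the finite sum over $\xi$ (immediate, since $\Xi_k$ is finite and each summand $\xi\mapsto D_0(\xi,\emparg)$ is a polynomial on which $\mathscr L$ is well-defined by Assumption~\ref{ass:beta}), and that the pairing used to define $L^\dagger$ is the same $\widehat\mu_k$-weighted pairing appearing in \eqref{eq:intertwining-backward}. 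No coupling, spectral, or analytic input is needed — the corollary is a formal transpose of Proposition~\ref{pr:duality}.
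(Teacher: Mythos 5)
Your argument is correct and is exactly the proof the paper has in mind: apply $\mathscr L$ termwise to the finite sum, convert $\mathscr L D_0(\xi,\cdot)$ into $L D_0(\cdot,\theta)$ via Proposition~\ref{pr:duality} with $b=0$, and then move $L$ onto $\psi$ as $L^\dagger$ using the $\widehat\mu_k$-pairing. The only cosmetic caveat is that the paper's $\dagger$-pairing (as written for $\mathfrak a^\dagger$) is bilinear rather than Hermitian, so no complex conjugation should appear; since $L$ has real entries this makes no difference to the computation.
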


	\section{Proofs of main results	}\label{sec:proof}
In this section, we prove Theorems \ref{th:main} and \ref{th:main-rev}. The first two subsections develop the two key steps of the proof strategy described in Section \ref{sec:intro-proof}. The third subsection puts these two together, closing the proof of Theorem \ref{th:main}, while the final one is dedicated to the proof of Theorem \ref{th:main-rev}.
\subsection{Control on $k$-degree eigenfunctions}\label{sec:proof-control}
Recall the definitions of $\mathscr P_k$ and $\mathscr P_{k,\dagger}$ from \eqref{eq:Pk} and \eqref{eq:Pk-dagger}, and of the generators $\cL$ and $\mathscr L$ from \eqref{eq:gen-general} and \eqref{eq:gen-HPM}, respectively.
\begin{lemma}\label{lem:eigenfunction-control}
	Fix $k\ge 2$, and let $\lambda \in \C$ be an eigenvalue of $-\cL\restr{\mathscr P_k}$, but not of $-\cL\restr{\mathscr P_{k-1}}$.  Then, there exists a polynomial $g:[0,1]^V\to \C$ satisfying
	 \begin{equation}\label{eq:conditions-g-lemma}
	 	\mathscr L g=-\lambda g\comma\qquad  \sup_{\theta\in [0,1]^V}\abs{g(\theta)}>0\comma\qquad \sup_{\theta\in [0,1]^V} \frac{\abs{g(\theta)}}{{\rm Var}_\pi(\theta)^{k/2}}<\infty\fstop
	 \end{equation}
\end{lemma}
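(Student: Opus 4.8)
The statement says: given an eigenvalue $\lambda$ of $-\cL$ on $\mathscr P_k$ that does not already live in $\mathscr P_{k-1}$, we can produce a polynomial $g$ on $[0,1]^V$ that is a $(-\lambda)$-eigenfunction of $\mathscr L$, is not identically zero, and is dominated by ${\rm Var}_\pi(\theta)^{k/2}$. The key transfer mechanism is the backward intertwining $\mathscr L \widetilde\psi = \widetilde{L^\dagger\psi}$ from Corollary \ref{cor:intertwining-backward}, combined with the forward intertwining $\cL\widehat\psi = \widehat{L\psi}$ from \eqref{eq:int-relation}. So the plan is: (1) lift the eigenfunction from $\mathscr P_k$ to the particle system, (2) pass to the time-reversed particle system, (3) push forward to the $\theta$-variables via $\widetilde{(\emparg)}$, (4) check the three conditions.

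\textbf{Step 1 (lift to particles).} Let $f\in\mathscr P_{k,\dagger}$ with $\cL f = -\lambda f$. Write $f = \widehat\psi$ for a unique $\psi\in\C^{\Xi_k}$ via the linear injection \eqref{eq:map-hat}; then \eqref{eq:int-relation} gives $\widehat{L\psi} = \cL\widehat\psi = -\lambda\widehat\psi = \widehat{-\lambda\psi}$, so by injectivity $L\psi = -\lambda\psi$. The hypothesis that $\lambda$ is \emph{not} an eigenvalue on $\mathscr P_{k-1}$ is crucial here and I would use it as follows: it forces $\psi$ to be genuinely ``of degree $k$'' in the sense that its projection onto the lower-particle layers is controlled. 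Concretely, via the annihilation operator $\mathfrak a$ and Proposition \ref{pr:annihilation}, $\mathfrak a\psi \in \C^{\Xi_{k-1}}$ satisfies $L(\mathfrak a\psi) = \mathfrak a(L\psi) = -\lambda\,\mathfrak a\psi$; since $\widehat{\mathfrak a\psi}\in\mathscr P_{k-1}$ and $\lambda$ is not an eigenvalue there, we get $\mathfrak a\psi = 0$, i.e.\ $\psi$ lies in the kernel of $\mathfrak a$. This ``harmonic-for-$\mathfrak a$'' property is what will make the domination estimate work.

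\textbf{Step 2 (time-reverse) and Step 3 (push to $\theta$).} Apply the adjoint relation $\mathfrak a^\dagger L^\dagger = L^\dagger \mathfrak a^\dagger$. I need an eigenfunction of $L^\dagger$, not of $L$, to feed into Corollary \ref{cor:intertwining-backward}. Since $L$ and $L^\dagger$ have the same spectrum (they are adjoint w.r.t.\ the finite measure $\widehat\mu_k$ on the finite set $\Xi_k$), pick $\phi\in\C^{\Xi_k}$ with $L^\dagger\phi = -\lambda\phi$; one should check $\phi$ can also be taken in $\ker\mathfrak a^\dagger$ — this follows because the generalized eigenspace structure of $L^\dagger$ mirrors that of $L$, and $\mathfrak a^\dagger$ intertwines layers the same way $\mathfrak a$ does, so the argument of Step 1 applies verbatim to $\phi$. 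Then set $g = \widetilde\phi$ as in \eqref{eq:intertwining-backward}: Corollary \ref{cor:intertwining-backward} gives $\mathscr L g = \mathscr L\widetilde\phi = \widetilde{L^\dagger\phi} = \widetilde{-\lambda\phi} = -\lambda g$, the first condition. For non-triviality, $\widetilde{(\emparg)}$ is injective (same reason $\widehat{(\emparg)}$ is — distinct monomials $\prod\theta_x^{\xi_x}$ are linearly independent and the weights $\widehat\mu_k(\xi)$ are strictly positive by irreducibility, Lemma \ref{lem:irreducible-particle-system}), so $\phi\ne 0 \Rightarrow g\not\equiv 0$.

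\textbf{Step 4 (the domination bound — the main obstacle).} This is the real work: showing $|g(\theta)| \lesssim {\rm Var}_\pi(\theta)^{k/2}$. The point is that $g(\theta) = \sum_\xi \widehat\mu_k(\xi)\,\phi(\xi)\prod_x\theta_x^{\xi_x}$, and the constraint $\mathfrak a^\dagger\phi = 0$ (or equivalently, working on the other side, that $g$ is built from a kernel element) should force every monomial appearing to have ``enough spread'': when $\theta$ is a constant configuration $\theta\equiv c$, ${\rm Var}_\pi(\theta) = 0$, and indeed $g$ must vanish there since $\sum_x\widehat\mu_k(\xi) c^k \phi(\xi) = c^k \widehat\mu_k\phi$ and the $\mathfrak a$-harmonicity makes $\widehat\mu_k\phi$ (the $0$-layer projection) vanish — more precisely, the kernel-of-$\mathfrak a^\dagger$ condition should kill all partial evaluations of $g$ at constant configurations up to order $k-1$, i.e.\ $g$ vanishes to order $k$ along the diagonal $\{\theta\equiv c\}$. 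I would make this quantitative by choosing coordinates adapted to $\pi$: write $\theta = (\pi\theta)\mathbf 1 + \tilde\theta$ with $\pi\tilde\theta = 0$, Taylor-expand the polynomial $g$ around $(\pi\theta)\mathbf 1$, and show the expansion starts at order $k$ in $\tilde\theta$ — each such term is bounded by $\|\tilde\theta\|^k \asymp {\rm Var}_\pi(\theta)^{k/2}$ on the compact set $[0,1]^V$, with the constant depending only on $\phi,\pi,k$. The subtle part is verifying that the vanishing is exactly to order $k$ and not less; this is where I expect to need to unwind the definition of $\mathfrak a^\dagger$ and the explicit form \eqref{eq:intertwining-backward} carefully, possibly using that $g$ restricted to any line through a constant configuration is a degree-$\le k$ polynomial whose coefficients of degrees $0,\dots,k-1$ are exactly the lower-layer projections killed by $\mathfrak a$-harmonicity. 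Once that is in place, the third condition in \eqref{eq:conditions-g-lemma} follows by compactness of $[0,1]^V$.
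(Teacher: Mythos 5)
Your proposal follows the same three-stage architecture as the paper: lift the eigenfunction via the forward intertwining to an eigenfunction $\psi$ of the $k$-particle generator $L$, pass to an eigenfunction $\phi$ of the time reversal $L^\dagger$ lying in $\ker\mathfrak a^\dagger$, push forward to $g=\widetilde\phi$ via the backward intertwining, and control $g$ near constant configurations. This is exactly what the paper does. Two points deserve comment, one a genuine gap, one an improvement.

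\textbf{A wrong (but fortunately unnecessary) claim in Step 1.} You claim that the $L$-eigenfunction $\psi$ itself satisfies ``$\mathfrak a\psi=0$''. First, the paper's $\mathfrak a$ goes $\C^{\Xi_{k-1}}\to\C^{\Xi_k}$, so the operator you have in mind is $\mathfrak a^\dagger:\C^{\Xi_k}\to\C^{\Xi_{k-1}}$. Second, and more seriously, the relation $\mathfrak a^\dagger L = L\mathfrak a^\dagger$ does \emph{not} hold in general: the available commutations are $\mathfrak a L = L\mathfrak a$ and its adjoint $\mathfrak a^\dagger L^\dagger = L^\dagger\mathfrak a^\dagger$. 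Hence $\mathfrak a^\dagger\psi$ need not be an $L$-eigenfunction, and $\mathfrak a^\dagger\psi=0$ is not justified (nor asserted by the paper, which only records the weaker and immediate fact $\psi\notin\mathfrak a(\C^{\Xi_{k-1}})$). This doesn't break your proof, because the object actually used is $\phi$, not $\psi$, but as written Step 1 is wrong.

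\textbf{Step 2: your argument is cleaner than the paper's.} Applied correctly to $\phi$ and $L^\dagger$, the argument reads: $\mathfrak a^\dagger\phi\in\C^{\Xi_{k-1}}$ satisfies $L^\dagger(\mathfrak a^\dagger\phi)=\mathfrak a^\dagger L^\dagger\phi=-\lambda\,\mathfrak a^\dagger\phi$; but $-\lambda$ is not an eigenvalue of $L^\dagger$ on $\C^{\Xi_{k-1}}$ (since $L$ and $L^\dagger$ are similar matrices and the hypothesis plus the forward intertwining excludes $-\lambda$ from the spectrum of $L$ on $\C^{\Xi_{k-1}}$), so $\mathfrak a^\dagger\phi=0$. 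This is a shorter, direct argument that avoids the paper's excursion through generalized eigenfunctions and the orthogonality computation.

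\textbf{The genuine gap: Step 4.} Your Taylor-expansion sketch captures the right intuition (vanishing to order $k$ at constant configurations), but you stop precisely at the hard part, which is to show this vanishing rigorously. The mechanism you are looking for is Proposition~\ref{pr:duality} combined with the identity $D_b=\exp(-b\mathfrak a)D_0$ from \eqref{eq:Db-D0}: since $\phi\in\ker\mathfrak a^\dagger$, the expression
\begin{equation}
  g(\theta) \;=\; \sum_{\xi\in\Xi_k}\widehat\mu_k(\xi)\,\phi(\xi)\,D_b(\xi,\theta)
  \;=\; \sum_{\xi\in\Xi_k}\widehat\mu_k(\xi)\,\phi(\xi)\prod_{x\in V}\tonde{\theta_x-b}^{\xi_x}
\end{equation}
is \emph{independent of} $b$ (differentiating in $b$ produces $-\ttscalar{\mathfrak a^\dagger\phi}{D_b(\emparg,\theta)}=0$). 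Choosing the $\theta$-dependent value $b=\pi\theta$ makes each summand manifestly of exact order $k$ in $\theta-(\pi\theta)\mathbf 1$, and the bound then follows by Cauchy--Schwarz together with $\widehat\mu_k(\xi)\le C\prod_x\pi_x^{\xi_x}$. Without this identity, ``show the expansion starts at order $k$'' is an assertion, not a proof; the key point is precisely that the $b$-dependence drops out because $\phi\in\ker\mathfrak a^\dagger$, and your sketch does not isolate it.
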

\begin{proof}Let $f\in \mathscr P_{k,\dagger}$ be an eigenfunction for $\lambda\in \C$.
	We divide the proof into three steps. In summary,  we first show that $f\in \mathfrak \cP_{k,\dagger}$  is linked to a \textquotedblleft true\textquotedblright\  eigenfunction $\psi\in \C^{\Xi_k}$ of the  $k$-particle system, that is, $\psi$ is not a function of only $\ell<k$ particles. Secondly, we prove that this $\psi$ corresponds to an analogous eigenfunction for the time-reversal particle system $\phi$ in $\ker(\mathfrak a^\dagger)$. The latter property is crucially exploited in the last step for constructing the hidden parameter model's eigenfunction $g$  in \eqref{eq:conditions-g-lemma}; see also Remark \ref{rem:g-Db}.
	
	\smallskip \noindent
	\textit{Step 1. There exists $0\neq \psi \in \C^{\Xi_k}$ satisfying}
	\begin{equation}\label{eq:step-1}
		\psi \notin \mathfrak a(\C^{\Xi_{k-1}})\comma\qquad f =\widehat \psi\comma\qquad  L\psi=-\lambda \psi\fstop
	\end{equation}
	
Let $\psi\in \C^{\Xi_k}$ be the unique function satisfying $f=\widehat \psi$.
Since $f\in \mathscr P_{k,\dagger}$, it cannot be that $\psi\in \mathfrak a(\C^{\Xi_{k-1}})$. If this were the case, we would have $\psi=\mathfrak a \varphi$ for some $\varphi \in \C^{\Xi_{k-1}}$, thus, 
\begin{equation}
	f=\widehat \psi=\widehat {\mathfrak a\varphi} = \widehat \varphi \in \mathscr P_{k-1}\comma
\end{equation}
leading to a contradiction, as $f\in \mathscr P_{k,\dagger}$. Finally, the third condition in \eqref{eq:step-1} is a consequence of $\cL f=-\lambda f$ and $f=\widehat \psi$.
	
	\smallskip \noindent
	\textit{Step 2. There exists $0\neq \phi\in \C^{\Xi_k}$ satisfying}
	\begin{equation}\label{eq:step-2}
		\phi \in \ker(\mathfrak a^\dagger)\comma\qquad L^\dagger \phi=-\lambda \phi\fstop
	\end{equation}
	
It is well-known that $L$ and $L^\dagger$---viewed as linear operators acting on the finite-dimensional space $\C^{\Xi_k}$---are isospectral (actually, as matrices,  $L^\dagger = J^{-1}L^TJ$, where $J={\rm diag}(\widehat \mu_k)$). 
 In particular, there must exist $0\neq \phi \in \C^{\Xi_k}$ satisfying 
\begin{equation}
	L^\dagger \phi=-\lambda \phi\fstop
\end{equation}
By Proposition \ref{pr:annihilation} (see also Remark \ref{rem:subspace-invariant}), $\mathfrak a(\C^{\Xi_{k-1}})\subset \C^{\Xi_k}$ is left invariant by $L$. Hence, $\mathfrak a(\C^{\Xi_{k-1}})$ is spanned by a family of (generalized) eigenfunctions, say \begin{equation}\epsilon_{1,1},\ldots, \epsilon_{1,i_1},\  \ldots\  \ldots\ ,\  {\epsilon_{m,1},  \ldots, \epsilon_{m,i_m}}\in \mathfrak a(\C^{\Xi_{k-1}})
	\comma
\end{equation} satisfying, for all $j=1,\ldots, m$ and some $\lambda_j\in \C$,
\begin{equation}\label{eq:generalized-eigenfunctions-chi}
L\epsilon_{j,1}=-\lambda_j\epsilon_{j,1}\comma\qquad	L\epsilon_{j,\ell}=-\lambda_j\epsilon_{j,\ell}+\epsilon_{j,\ell-1}\comma\quad \ell=2,\ldots, i_j\fstop
\end{equation}
Hence, for all $j=1,\ldots, m$, we obtain ($\ttscalar{\phi}{\epsilon}=\ttscalar{\phi}{\epsilon}_{L^2(\widehat \mu_k)}\eqdef\sum_{\xi\in \Xi_k}\widehat \mu_k(\xi)\, \phi^*(\xi)\,\epsilon(\xi)$)
\begin{equation}
	\lambda^* \ttscalar{\phi}{\epsilon_{j,1}}= \ttscalar{\lambda\phi}{\epsilon_{j,1}}=-\ttscalar{L^\dagger \phi}{\epsilon_{j,1}}=-\ttscalar{\phi}{L \epsilon_{j,1}}=\ttscalar{\phi}{\lambda_j\epsilon_{j,1}}=\ttscalar{\phi}{\epsilon_{j,1}}\lambda_j\fstop
\end{equation}
By the assumption of minimality, we have $\lambda \neq \lambda_j, \lambda_j^*$, $j=1,\ldots, m$, implying that $\ttscalar{\phi}{\epsilon_{j,1}}=0$. An iterative argument in $\ell=1,\ldots, i_j$ and using the second identity in \eqref{eq:generalized-eigenfunctions-chi} yields $\ttscalar{\phi}{\epsilon_{j,\ell}}=0$, for all $j=1,\ldots, m$ and $\ell=1,\ldots, i_j$. Since the generalized eigenfunctions $(\epsilon_{j,\ell})_{j,\ell}$ form a basis of $\mathfrak a(\C^{\Xi_{k-1}})$, we have $\ttscalar{\phi}{\varphi}=0$, for all $\varphi \in \mathfrak a(\C^{\Xi_{k-1}})$; by definition, this ensures that $\phi\in \ker(\mathfrak a^\dagger)\subset \C^{\Xi_k}$.
	
	\smallskip \noindent
	\textit{Step 3. $g\eqdef \widetilde \phi$  satisfies the conditions in \eqref{eq:conditions-g-lemma}.}

	By definition of backward intertwining in \eqref{eq:intertwining-backward} and Corollary \ref{cor:intertwining-backward}, we have
	\begin{equation}
		\mathscr L g(\theta)= \sum_{\xi\in \Xi_k}\widehat \mu_k(\xi)\, L^\dagger\phi(\xi)\tonde{\prod_{x\in V}\theta_x^{\xi_x}} = -\lambda g(\theta)\comma
	\end{equation}
	where the last step used the second condition in \eqref{eq:step-2}. Moreover, by the first condition in \eqref{eq:step-2} and the identity in \eqref{eq:Db-D0}, we get
	\begin{equation}\label{eq:g-Db}
		g(\theta)= \sum_{\xi\in \Xi_k} \widehat\mu_k(\xi)\, \phi(\xi)\, D_b(\xi,\theta)\comma\qquad \theta\in [0,1]^V\comma b\in \R\fstop
	\end{equation}
	Setting $b=\pi\theta$ on the right-hand side above, Cauchy-Schwarz inequality yields
	\begin{equation}
		\abs{g(\theta)}\le \ttnorm{\phi}_{L^2(\widehat \mu_k)}\ttnorm{D_{\pi\theta}(\emparg,\theta)}_{L^2(\widehat \mu_k)}\comma\qquad \theta \in [0,1]^V\fstop
	\end{equation}
	Clearly, $\ttnorm{\phi}_{L^2(\widehat \mu_k)}<\infty$; furthermore, we have
	\begin{align}
		\ttnorm{D_{\pi\theta}(\emparg,\theta)}_{L^2(\widehat \mu_k)}^2 &= \sum_{\xi\in \Xi_k}\widehat\mu_k(\xi)\tonde{\prod_{x\in V}\tonde{\theta_x-\pi\theta}^{2\xi_x}}\\
		&= \sum_{x_1,\ldots, x_k\in V} \mu(\eta_{x_1}\cdots \eta_{x_k})\tonde{\prod_{i=1}^k\tonde{\theta_{x_i}-\pi\theta}^2}\\
		&\le C\sum_{x_1,\ldots, x_k\in V} \mu(\eta_{x_1})\cdots \mu(\eta_{x_k})\tonde{\prod_{i=1}^k \tonde{\theta_{x_i}-\pi\theta}^2}\\
		&= C\, {{\rm Var}_\pi(\theta)}^k\comma
	\end{align}
	where  the second step follows from $\widehat \mu_k \varphi=\mu\widehat \varphi$ with $\varphi(x_1,\ldots,x_k)=\prod_{i=1}^k \tonde{\theta_{x_i}-\pi\theta}^2$,  the third step used that $\mu(\eta_x)=\pi_x$ and
	\begin{equation}
		\mu(\eta_{x_1}\cdots\eta_{x_k})\le C\,\mu(\eta_{x_1})\cdots \mu(\eta_{x_k})\comma\quad \text{with}\ C=\tonde{\min_{x\in V}\pi_x}^{-k}
		\comma
	\end{equation}
	 whereas the last step follows by definition of  ${\rm Var}_\pi(\theta)$; note that 	$C\in (0,\infty)$ by Assumption \ref{ass:irr}. This proves the third condition in \eqref{eq:conditions-g-lemma}.	 Since $\phi\in \ker(\mathfrak a^\dagger)$ and $\phi\neq 0$, verifying  $g\neq 0$ is immediate. This concludes the proof of the third step and, thus, of the lemma.
\end{proof}
\begin{remark}\label{rem:g-Db}
 In the above proof, \textit{Step 3} relies on the fact that the function
	\begin{equation}
	g(\theta) = \sum_{\xi \in \Xi_k} \widehat{\mu}_k(\xi)\, \phi(\xi)\, D_{\pi \theta}(\xi, \theta)\comma\qquad \theta \in [0,1]^V\comma
	\end{equation}
	in \eqref{eq:g-Db}
	is an eigenfunction of  $\mathscr L$. An essential ingredient here is Proposition \ref{pr:duality}, which asserts that  $D_b(\xi, \eta)$ satisfies the duality relation in \eqref{eq:duality} for any fixed constant $b \in \R$. However, this duality \textit{does not} hold when $b = \pi \theta$, since $b$ then depends on the variable $\theta$. This is precisely where the property $\phi \in \ker(\mathfrak a^\dagger)$ becomes crucial, ensuring that  the dependence on $b=\pi \theta$ in $g(\theta)$ is lost.
\end{remark}
\subsection{$L^2$-contraction of hidden parameter models}\label{sec:proof-L2}

The following lemma provides the key---and only---quantitative ingredient in our arguments. Recall the definitions around Theorem \ref{th:main}, and of the hidden parameter model generator $\mathscr L$ from \eqref{eq:gen-HPM}.
\begin{lemma}\label{lem:L2-contraction} Under Assumptions \ref{ass:beta} and \ref{ass:irr}, we have, for all $\theta\in [0,1]^V$,	
	\begin{equation}\label{eq:L2-norm-gamma-dir}
		\mathscr L\,{\rm Var}_\pi(\theta)\le -\gamma(\cL)\,\cE_{\rm RW}(\theta)\comma
	\end{equation}
	where ${\rm Var}_\pi(\theta)=\ttnorm{\theta-\pi\theta}_{L^2(\pi)}^2=\sum_{x\in V}\pi_x\tonde{\theta_x-\pi\theta}^2$, and $\gamma(\cL)$ is given in \eqref{eq:gamma-general}.
\end{lemma}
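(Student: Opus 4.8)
The plan is to compute $\mathscr L\,{\rm Var}_\pi(\theta)$ directly from the definition of $\mathscr L$ in \eqref{eq:gen-HPM}, exploiting the fact that $\pi$ is reversible-reversible enough, namely that it is the invariant measure of the RW with rates $(r_{xy})$ (cf.\ \eqref{eq:pi-invariance}), together with the explicit form of a single $\theta$-update from \eqref{eq:HPM-update-matrix}. Writing ${\rm Var}_\pi(\theta)=\sum_z\pi_z\theta_z^2-(\pi\theta)^2$, I would first deal with the quadratic term $\sum_z\pi_z\theta_z^2$ and the linear-squared term $(\pi\theta)^2$ separately, since an update on the edge $\{x,y\}$ changes only the $x$- and $y$-coordinates.

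\textbf{Key steps.} First, for a fixed pair $x,y$ and fixed $u,v$, set $\theta_x'=u\theta_x+(1-u)\theta_y$ and $\theta_y'=(1-v)\theta_x+v\theta_y$. Then
\begin{equation}
	\pi_x(\theta_x')^2+\pi_y(\theta_y')^2-\pi_x\theta_x^2-\pi_y\theta_y^2
\end{equation}
is a quadratic form in $(\theta_x,\theta_y)$; expanding and collecting the coefficient of $(\theta_x-\theta_y)^2$ versus the coefficient of $\theta_x^2,\theta_y^2,\theta_x\theta_y$ individually, one sees that the part of the change which is \emph{not} proportional to $(\theta_x-\theta_y)^2$ is a linear combination of $\theta_x^2-\theta_x\theta_y$ and $\theta_y^2-\theta_x\theta_y$ whose coefficients, after integrating against $\beta_{xy}(\dd u,\dd v)+\beta_{yx}(\dd v,\dd u)$ and summing over all pairs, telescope using the RW-invariance \eqref{eq:pi-invariance} and the definition of $r_{xy}$ in \eqref{eq:rates-rxy}. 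More precisely, the "drift'' part should exactly cancel against the contribution of $-(\pi\theta)^2$: since $\mathscr L$ applied to the linear function $\theta\mapsto\pi\theta=\sum_z\pi_z\theta_z$ produces terms governed by the $r$-rates and $\pi$-invariance, and since $(\pi\theta)^2$ is quadratic in that linear function, the second-order expansion of $\mathscr L(\pi\theta)^2$ picks up precisely the mean-square drift. The upshot should be an identity of the schematic form
\begin{equation}
	\mathscr L\,{\rm Var}_\pi(\theta)=-\sum_{\{x,y\}}\big(\chi_{xy}+\sigma_{xy}\big)(\theta_x-\theta_y)^2\comma
\end{equation}
where $\chi_{xy}$ is the genuinely second-order ("diffusive'') coefficient coming from the variance of a single update — matching \eqref{eq:chi-xy}, with the $u(1-u)$ and $v(1-v)$ arising as $\mathrm{Var}$ of the Bernoulli-type mixing — and $\sigma_{xy}$ is the extra nonnegative term \eqref{eq:sigma-xy} coming from the squared first-order discrepancy $(\pi_x(1-u)-\pi_y(1-v))$, which survives because the update is not mass-conserving for $\theta$ (unlike for $\eta$). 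Second, once this exact identity is in hand, I compare with the RW Dirichlet form $\cE_{\rm RW}(\theta)=\tfrac12\sum_{x,y}\pi_{xy}(\theta_x-\theta_y)^2$ from \eqref{eq:dir-form}: bounding $\chi_{xy}+\sigma_{xy}\ge \gamma(\cL)\,\pi_{xy}$ edge by edge — which is exactly the definition of $\gamma(\cL)$ in \eqref{eq:gamma-general} as a minimum of these ratios over pairs with $\pi_{xy}>0$ — yields \eqref{eq:L2-norm-gamma-dir} immediately. (One must check that $\chi_{xy}+\sigma_{xy}=0$ whenever $\pi_{xy}=0$, so that the edges dropped from the sum are harmless; this follows since $\pi_{xy}=0$ forces $r_{xy}=r_{yx}=0$, hence $\beta_{xy}(1-u)$ and $\beta_{yx}(1-v)$ integrate to zero, which kills both $\chi_{xy}$ and $\sigma_{xy}$.)

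\textbf{Main obstacle.} The delicate point is the bookkeeping in the first step: showing that \emph{all} cross terms and first-order terms regroup cleanly into a multiple of $(\theta_x-\theta_y)^2$ plus the exact $\sigma_{xy}$-correction, rather than leaving a residual indefinite quadratic form. This is where the reversibility hypothesis — or rather the $\pi$-invariance \eqref{eq:pi-invariance} together with the symmetrization $\beta_{xy}(\dd u,\dd v)+\beta_{yx}(\dd v,\dd u)$ built into the definitions of $\chi_{xy},\sigma_{xy}$ — must be used in just the right way; the symmetrized measure is what makes the coefficient of $\theta_x^2$ and of $\theta_y^2$ pair up correctly. I expect the cleanest route is to expand $\mathscr L(\theta_z\theta_w)$ for all $z,w$ using the update rule, then assemble ${\rm Var}_\pi=\sum_z\pi_z\theta_z^2-\sum_{z,w}\pi_z\pi_w\theta_z\theta_w$, and verify the cancellation of the non-$(\theta_x-\theta_y)^2$ part term by term; the only identity needed beyond algebra is $\sum_{y\neq x}\pi_y r_{yx}=\pi_x\sum_{y\neq x}r_{xy}$. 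No coupling, and no appeal to the particle picture, is needed here — this is purely a one-step second-moment computation, consistent with the paper's remark that the estimate is "graph- and coupling-independent.''
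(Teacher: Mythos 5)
Your proposal is correct and is essentially the paper's own proof: same decomposition $\mathrm{Var}_\pi(\theta)=\sum_z\pi_z\theta_z^2-(\pi\theta)^2$, same one-step second-moment expansion using the symmetrized measure $\beta_{xy}(\dd u,\dd v)+\beta_{yx}(\dd v,\dd u)$ together with $\pi$-invariance \eqref{eq:pi-invariance}, same exact identity $\mathscr L\,\mathrm{Var}_\pi(\theta)=-\tfrac12\sum_{x,y}(\chi_{xy}+\sigma_{xy})(\theta_x-\theta_y)^2$, and same edge-by-edge comparison with $\pi_{xy}$. One small heuristic in your write-up is slightly off — in the actual computation the drift terms in $\mathscr L\sum_z\pi_z\theta_z^2$ and in $\mathscr L(\pi\theta)^2$ each vanish \emph{separately} by $\pi$-invariance rather than cross-cancelling against each other — but your term-by-term bookkeeping would discover this and the conclusion is unchanged.
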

\begin{proof}In this proof, we simply write $\int \cdots=\int_{[0,1]^2}\cdots$, $\sum_{x,y}=\sum_{x,y\in V}$, $\ttnorm{\emparg}=\ttnorm{\emparg}_{L^2(\pi)}$.
	
	Since ${\rm Var}_\pi(\theta)=\norm{\theta}^2-(\pi\theta)^2$, 	we have
	$
	\mathscr L\,{\rm Var}_\pi(\theta) = \mathscr L\ttnorm{\theta}^2 - \mathscr L(\pi \theta)^2$. We start by computing this first term: 
	\begin{align}\label{eq:L2-norm-first}
		\mathscr L\ttnorm{\theta}^2 &= \sum_{x,y}\int\beta_{xy}(\dd u,\dd v)\,\ttset{\ttnorm{R_{xy}^{uv}\theta}^2-\ttnorm{\theta}^2}\fstop
	\end{align}
	Since 
	\begin{align}
		\ttnorm{R_{xy}^{uv}\theta}^2-\ttnorm{\theta}^2&= \theta_x^2\tonde{-\pi_x\tttonde{1-u^2}+\pi_y\tttonde{1-v}^2}+\theta_y^2\tonde{-\pi_y\tttonde{1-v^2}+\pi_x\tttonde{1-u}^2}\\
		&\qquad+2\theta_x\theta_y\tonde{\pi_x\,u\tonde{1-u}+\pi_y\,v\tonde{1-v}}\\
		&= \ttnorm{R_{yx}^{vu}\theta}^2-\ttnorm{\theta}^2\comma
	\end{align}
	the expression in \eqref{eq:L2-norm-first} reads as
	\begin{align}
		&\frac12\sum_{x,y}\theta_x^2 {\int\quadre{\beta_{xy}(\dd u,\dd v)+\beta_{yx}(\dd v,\dd u)}\tonde{-\pi_x\tttonde{1-u^2}+\pi_y\tttonde{1-v}^2}}\\
		+\,&\frac12\sum_{x,y}\theta_y^2 {\int\quadre{\beta_{xy}(\dd u,\dd v)+\beta_{yx}(\dd v,\dd u)}\tonde{-\pi_y\tttonde{1-v^2}+\pi_x\tttonde{1-u}^2}}\\
		+\,& \sum_{x,y}\theta_x\theta_y{
			\int\quadre{\beta_{xy}(\dd u,\dd v)+\beta_{yx}(\dd v,\dd u)}\tonde{\pi_x\,u(1-u)+\pi_y\,v(1-v)}}\fstop
	\end{align}
	Remark that $\chi_{xy}=\chi_{yx}$ in \eqref{eq:chi-xy} coincides with the expression in the last integral above.
	Then, after expanding $1-u^2=(1-u)+u(1-u)$, $(1-u)^2=(1-u)-u(1-u)$ (similarly for $v$), a part of the content of the first two integrals above equals $-\chi_{xy}$; for the remaining terms, recall  the definition of $r_{xy}$ from \eqref{eq:rates-rxy}. More precisely, we obtain
	\begin{align}
		\mathscr L\ttnorm{\theta}^2 &=-\frac12 \sum_{x,y}\chi_{xy}\tonde{\theta_x-\theta_y}^2+ \frac12\sum_{x,y}\tonde{\theta_x^2-\theta_y^2} \set{-\pi_x\,r_{xy}+\pi_y\,r_{yx}}\\
		&=-\frac12 \sum_{x,y}\chi_{xy}\tonde{\theta_x-\theta_y}^2\comma
	\end{align}
	where the last step is a consequence of the invariance of $\pi$, see \eqref{eq:pi-invariance}.

	We are left with the computation of
	\begin{align}\label{eq:L2-norm-second}
		\mathscr L (\pi\theta)^2 &=\sum_{x,y}\int\beta_{xy}(\dd u,\dd v)\set{(\pi R_{xy}^{uv}\theta)^2-(\pi\theta)^2}\fstop
	\end{align}
	By writing $\pi\theta =\sum_{z\in V}\pi_z\,\theta_z$ as $ (\pi_x\,\theta_x+\pi_y\,\theta_y)+\tau_{xy}=(\pi_x\,\theta_x+\pi_y\,\theta_y)+\sum_{z\neq x,y}\pi_z\,\theta_z$, we have
	\begin{align}
		&	(\pi R_{xy}^{uv}\theta)^2-(\pi\theta)^2 = (\pi R_{yx}^{vu}\theta)^2-(\pi\theta)^2 \\
		&= (\pi_x\tonde{u\,\theta_x+(1-u)\,\theta_y}+\pi_y\tonde{(1-v)\theta_x+v\theta_y} + \tau_{xy})^2-(\pi_x\,\theta_x+\pi_y\,\theta_y+\tau_{xy})^2\\
		&= \pi_x^2\tonde{(u^2-1)\,\theta_x^2+(1-u)^2\,\theta_y^2+2u(1-u)\,\theta_x\theta_y}\\
		&\quad+\pi_y^2\tonde{(1-v)^2\,\theta_x^2+(v^2-1)\,\theta_y^2+2 v(1-v)\,\theta_x\theta_y}\\
		&\quad+ 2\pi_x\pi_y\tonde{u(1-v)\,\theta_x^2+(1-u)v\,\theta_y^2+\tonde{2uv -u-v}\theta_x\theta_y}\\
		&\quad -2\tau_{xy}\tonde{\theta_x-\theta_y	}\tonde{\pi_x\,(1-u)-\pi_y\,(1-v)}\fstop
	\end{align}
	As a consequence, the expression in \eqref{eq:L2-norm-second} equals
	\begin{align}
		& \frac12\sum_{x,y}\theta_x^2{\int \quadre{\beta_{xy}(\dd u,\dd v)+\beta_{yx}(\dd v,\dd u)}\tonde{\pi_x^2\,(u^2-1) + \pi_y^2\,(1-v)^2 + 2\pi_x\pi_y\, u(1-v)}}\\
		+\,&\frac12\sum_{x,y}\theta_y^2{\int \quadre{\beta_{xy}(\dd u,\dd v)+\beta_{yx}(\dd v,\dd u)}\tonde{\pi_y^2\,(v^2-1)+\pi_x^2\,(1-u)^2+2\pi_x\pi_y\,v(1-u)}}\\
		+\,&\sum_{x,y}\theta_x\theta_y
		{\int \quadre{\beta_{xy}(\dd u,\dd v)+\beta_{yx}(\dd v,\dd u)} \tonde{\pi_x^2\,u(1-u)+\pi_y^2\, v(1-v)+\pi_x\pi_y\,(2uv-u-v)}}\\
		-\,&\sum_{x,y}\tau_{xy}\tonde{\theta_x-\theta_y}\set{\pi_x\, r_{xy}-\pi_y\,r_{yx}}\fstop	\end{align}				
	Then, by expanding $u^2-1=(1-u)^2-2(1-u)$, $u(1-v)=-(1-u)(1-v)+(1-v)$, $u(1-u)=-(1-u)^2+(1-u)$, $2uv-u-v=2(1-u)(1-v)-(1-u)-(1-v)$,	 etc., we  get
	\begin{align}
		\mathscr L(\pi\theta)^2&= \frac12\sum_{x,y}\sigma_{xy}\tonde{\theta_x-\theta_y}^2\\
		&-\sum_{x,y}\pi_x\,\theta_x^2\set{\pi_x\,r_{xy}-\pi_y\,r_{yx}} + \sum_{x,y}\pi_y\,\theta_y^2\set{\pi_x\,r_{xy}-\pi_y\,r_{yx}}\\
		&+\sum_{x,y}\theta_x\theta_y\tonde{\pi_x-\pi_y}\set{\pi_x\,r_{xy}-\pi_y\,r_{yx}}-\sum_{x,y}\tau_{xy}\tonde{\theta_x-\theta_y}\set{\pi_x\, r_{xy}-\pi_y\,r_{yx}}\\
		&=\frac12\sum_{x,y}\sigma_{xy}\tonde{\theta_x-\theta_y}^2 - \sum_{x,y}\tonde{\pi_x\theta_x+\pi_y\theta_y+\tau_{xy}}\tonde{\theta_x-\theta_y} \set{\pi_x\,r_{xy}-\pi_y\,r_{yx}}\\
		&=\frac12\sum_{x,y}\sigma_{xy}\tonde{\theta_x-\theta_y}^2\comma
	\end{align}
	where the last step used that $\pi_x\theta_x+\pi_y\theta_y+\tau_{xy}=\pi\theta$ does not depend on $x,y\in V$, and the invariance of $\pi$, i.e.,	 \eqref{eq:pi-invariance}. 
	
	Altogether, we obtained that $	\mathscr L\,{\rm Var}_\pi(\theta)=\mathscr L\ttnorm{\theta}^2-\mathscr L(\pi\theta)^2$ equals
	\begin{align}\label{eq:L2-norm-last}
		-\frac12\sum_{x,y}\tonde{\chi_{xy}+\sigma_{xy}}\tonde{\theta_x-\theta_y}^2 =-\frac12\sum_{x,y\,:\,\pi_{xy}>0}\frac{\chi_{xy}+\sigma_{xy}}{\pi_{xy}}\,\pi_{xy}\tonde{\theta_x-\theta_y}^2\comma
	\end{align}
	and, by \eqref{eq:dir-form} and \eqref{eq:gamma-general}, the desired claim follows.
\end{proof}
\begin{remark}
	Recall from \eqref{eq:gamma-general} that $\gamma(\cL)$ is defined as a minimum. If this coincides with the maximum of the same quantity, then the inequality in \eqref{eq:L2-norm-gamma-dir} becomes an identity.
\end{remark}

\subsection{Proof of Theorem \ref{th:main}}

The results of Lemmas \ref{lem:eigenfunction-control} and \ref{lem:L2-contraction} provide all we need to prove Theorem \ref{th:main} following the strategy outlined in Section \ref{sec:intro-proof}.
\begin{proof}[Proof of Theorem \ref{th:main}]
	Let  $\lambda\in \C$ and $f\in \mathscr P_{k,\dagger}$, $k\ge 2$, be an	 eigenpair of $-\cL$ as in Lemma \ref{lem:eigenfunction-control}. Recall that $(\mathscr S_t)_{t\ge 0}$ stands for the Markov semigroup of the hidden parameter model $(\theta(t))_{t\ge 0}$, whose generator $\mathscr L$ was given in \eqref{eq:gen-HPM}. 
	
	  Lemma \ref{lem:eigenfunction-control} implies that there exists $g:[0,1]^V\to \C$, $g\neq 0$, satisfying, for some $C=C(g)>0$ and for all  $t\ge 0$ and $\theta \in [0,1]^V$,
	\begin{equation}\label{eq:1}
		\ttabs{e^{-\lambda t}g(\theta)}=\ttabs{\mathscr S_t g(\theta)} \le \mathscr S_t\ttabs{g}(\theta)\le C\,		 (\mathscr S_t{\rm Var}_\pi)(\theta)\comma
	\end{equation}
	where, since $k\ge 2$, we used that ${\rm Var}_\pi(\theta)^{k/2}\le {\rm Var}_\pi(\theta)$.
	
	Lemma \ref{lem:L2-contraction} yields, for  all $t\ge 0$ and $\theta\in [0,1]^V$, 
	\begin{equation}
		\frac{\dd}{\dd t}(\mathscr S_t{\rm Var}_\pi)(\theta)=(\mathscr S_t \mathscr L{\rm Var}_\pi)(\theta)\le -\gamma(\cL)\,(\mathscr S_t\cE)(\theta)\le -\gamma(\cL)\, {\rm gap}_{\rm RW}(\cL)\, (\mathscr S_t{\rm Var}_\pi)(\theta)\comma
	\end{equation}
	where the last inequality follows by the definition of ${\rm gap}_{\rm RW}(\cL)$ in \eqref{eq:gap-RW}. By Gr\"onwall's inequality, we obtain, for all $t\ge0$ and $\theta\in [0,1]^V$,
	\begin{equation}\label{eq:2}
		(\mathscr S_t{\rm Var}_\pi)(\theta)\le e^{-\gamma(\cL)\,{\rm gap}_{\rm RW}(\cL)\,t}\,{\rm Var}_\pi(\theta)\le e^{-\gamma(\cL)\,{\rm gap}_{\rm RW}(\cL)\,t}\fstop
	\end{equation}
By combining \eqref{eq:1}, \eqref{eq:2} and $g\neq 0$, taking the $\log$ and sending $t\to \infty$, we obtain the desired estimate.	
\end{proof}

\subsection{Proof of Theorem \ref{th:main-rev}}
Fix $\alpha$ and $\mu\sim {\rm Dir}(\alpha)$.
 We divide the proof of Theorem \ref{th:main-rev} into two parts: one for the main claim and the lower bound in \eqref{eq:main-gap-inequalities}, one for the upper bound in \eqref{eq:main-gap-inequalities}.

\begin{proof}[Proof of the lower bound in Theorem \ref{th:main-rev}] 
	Since $\mu$ is absolutely continuous with respect to the uniform measure on $\Omega$, $\cL$ in \eqref{eq:gen-general}, viewed as an operator on  polynomials in $\mathscr P$, is well-defined as an operator on equivalence classes in $L^2(\mu)$. Since $\cL$  leaves $\mathscr P_k$, $k\ge 0$, invariant, and is symmetric on the the dense subspace $\mathscr P\subset L^2(\mu)$, there exists a c.o.n.s.\ 
	\begin{equation}\label{eq:cons}
		f_{kj}\in \mathscr P_{k,\dagger}\comma\qquad k\in \N_0\comma j=1,\ldots, N_k\eqdef|\Xi_k|-|\Xi_{k-1}|\comma	
	\end{equation}in $L^2(\mu)$ of polynomial eigenfunctions for $-\cL$, all associated to real, nonnegative eigenvalues. Hence,  $-\cL$ is essentially self-adjoint \cite[Theorem VIII.3]{reed_simon_methods_I_1972}, and the spectral theorem applies to its unique closed extension, ensuring a pure point spectrum. Theorem \ref{th:main} yields the lower bound in \eqref{eq:main-gap-inequalities}.
\end{proof}
\begin{proof}[Proof of the upper bound in Theorem \ref{th:main-rev}]
	 It suffices to prove that the lowest nonzero eigenvalue of $-\cL\restr{\mathscr P_1}$---and, thus, of $-L$ on $\R^{\Xi_1}$---equals ${\rm gap}_{\rm RW}(\cL)$ as defined in \eqref{eq:gap-RW}. For this purpose, we show that $L$ is self-adjoint on $L^2(\widehat \mu_1)\cong L^2(\pi)$,  $\pi_x=\widehat \mu_1(\delta_x)$, $x\in V$.
	
	Recalling the c.o.n.s.\ of (real-valued) polynomial eigenfunctions in \eqref{eq:cons}, we have
	\begin{equation}\label{eq:cons2}
		\ttscalar{f_{kj}}{f_{\ell i}}_{L^2(\mu)}=\car_{k=\ell,j=i}\comma\quad f_{kj}=\widehat \psi_{kj}\comma 
		\quad L \psi_{kj}=-\lambda_{kj}\psi_{kj}\fstop
	\end{equation}
	for some $\psi_{kj}\in \R^{\Xi_k}\setminus \mathfrak a(\R^{\Xi_{k-1}})$ and $\lambda_{kj}\ge0$.
	Clearly,  we have $f_{01}=1=\widehat 1$ and, by the first two identities in \eqref{eq:cons2} with $k\ge 1$ and $\ell=0$,  
	\begin{equation}\label{eq:cons3}\mu f_{kj}=\widehat \mu_k\psi_{kj}=0
	\fstop
	\end{equation} By the same identities in \eqref{eq:cons2} with $k=\ell=1$, we obtain
	\begin{align}
		\car_{j=i}= \ttscalar{f_{1j}}{f_{1i}}_{L^2(\mu)}&= \sum_{x,y\in V} \psi_{1j}(x)\,\psi_{1i}(y)\, \mu(\eta_x\eta_y)\\
		&=\frac1{|\alpha|+1}\ttscalar{\psi_{1j}}{\psi_{1i}}_{L^2(\pi)} + \frac{|\alpha|}{|\alpha|+1}\tonde{\pi\psi_{1j}}\tonde{\pi\psi_{1i}}\\
		&= \frac1{|\alpha|+1}\ttscalar{\psi_{1j}}{\psi_{1i}}_{L^2(\pi)}\comma
	\end{align}
	 where for the third step we used that $\mu\sim {\rm Dir}(\alpha)$ fulfills $\mu(\eta_x\eta_y)= \frac1{|\alpha|+1}\ttset{ \car_{x=y}\pi_x+ |\alpha|\pi_x\pi_y}$, while \eqref{eq:cons3} for the last one. Hence, this and \eqref{eq:cons2} ensure that $\mathfrak a 1,\psi_{11},\ldots, \psi_{1N_1}\in \R^{\Xi_1}$ forms an $L^2(\pi)$-orthogonal basis of eigenfunctions for $-L$. This proves the  claim.	
\end{proof}
\begin{remark}
	Along the same lines of the proof above, it is not difficult to check that if $\cL$ admits $\mu\sim {\rm Dir}(\alpha)$ as a reversible measure, then $L$ is self-adjoint on $L^2(\widehat \mu_k)$, not only for $k=1$, but for all $k\ge 1$.
\end{remark}

\begin{remark}\label{rem:general-assumptions-reversibile}
	The proof of the lower bound in Theorem \ref{th:main-rev} actually remains valid also when $\mu$ is not a Dirichlet measure, but merely absolutely continuous on $\Omega$.	
\end{remark}

\section{Examples. Proofs of Theorems \ref{th:gap-beta}--\ref{th:gap-IEM}}\label{sec:examples}
We now discuss in more detail a few examples of reversible stochastic exchange model	on an undirected weighted connected finite graph $G=(V,(c_{xy})_{x,y\in V})$, where $c_{xy}=c_{yx}\ge 0$ are symmetric conductances. Further,  we fix some positive weights $\alpha=(\alpha_x)_{x\in V}$ attached to vertices. Given all this, we think of the rates  $\beta_{xy}$ and $\beta_{yx}$  as being proportional to $c_{xy}\ge 0$ and depending on $\alpha_x, \alpha_y>0$. 

For all the models that we now consider,   Assumptions \ref{ass:beta} and \ref{ass:irr} always hold true, and  ${\rm Dir}(\alpha)$ in \eqref{eq:mu-dirichlet} is their unique reversible measure; thus, Theorem \ref{th:main-rev} and Corollary \ref{cor:aldous-conj} apply; see also Remark \ref{rem:gap-rw-1}. 
Before entering their details, we mention that all computations below, although somewhat lengthy, are simple, merely based on the following standard identities involving the Gamma function: for all $a,b>0$, 
\begin{equation}
	\int_0^1 u^{a-1}\tonde{1-u}^{b-1}\dd u = \frac{\Gamma(a)\Gamma(b)}{\Gamma(a+b)}\comma\qquad \Gamma(a+1)=a\Gamma(a)\fstop
\end{equation}
\subsection{The Kipnis-Marchioro-Presutti model} Define, 	for all distinct	 $x,y\in V$,
\begin{equation}
	\beta_{xy}(\dd u,\dd v) = \frac{c_{xy}}{2\,{\rm B}(\alpha_x,\alpha_y)}  u^{\alpha_x-1}\tonde{1-u}^{\alpha_y-1}\dd u\, \delta_{1-u}(\dd v)\comma
\end{equation}
where ${\rm B}(\alpha_x,\alpha_y)={\rm B}(\alpha_y,\alpha_x)$ is the Beta function (i.e., the normalization constant defined in \eqref{eq:mu-dirichlet} with $V=\set{x,y}$). These rates correspond to an edge heat-bath dynamics with respect to the Dirichlet distribution, possibly with edge and vertex inhomogeneities. Therefore, reversibility readily follows.

With this choice, the corresponding $r$-rates read as (cf.\ \eqref{eq:rates-rxy})
\begin{equation}
	r_{xy}= \frac{c_{xy}}{2\,{\rm B}(\alpha_x,\alpha_y)} \set{\int_0^1 u^{\alpha_x-1}\tonde{1-u}^{\alpha_y}\dd u + \int_0^1 v^{\alpha_y}\tonde{1-v}^{\alpha_x-1}\dd v }= c_{xy}\,\frac{\alpha_y}{\alpha_x+\alpha_y}\fstop
\end{equation}
Clearly, detailed balance for $(r_{xy})_{x,y\in V}$ holds with  $\pi=(\pi_x)_{x\in V}$ given by	
\begin{equation}\label{eq:pi-alpha}
	\pi_x=\frac{\alpha_x}{|\alpha|}\comma\quad \text{with}\ |\alpha|=\sum_{x\in V}\alpha_x\fstop
\end{equation} For what concerns the values of $\chi_{xy}$ and $\sigma_{xy}$ introduced in \eqref{eq:chi-xy}--\eqref{eq:sigma-xy}, we have
\begin{align}
	\chi_{xy}&= \frac{c_{xy}}{2\,{\rm B}(\alpha_x,\alpha_y)}\set{\frac{\alpha_x}{|\alpha|}\int_0^1 u^{\alpha_x}\tonde{1-u}^{\alpha_y}\dd u+\frac{\alpha_y}{|\alpha|}\int_0^1 v^{\alpha_y}\tonde{1-v}^{\alpha_x}\dd v
	}\\
	&=  \frac{c_{xy}}{|\alpha|}\,\frac{\alpha_x\alpha_y}{\alpha_x+\alpha_y+1}\comma
\end{align}
and
\begin{align}
	\sigma_{xy}&=\frac{c_{xy}}{2\,{\rm B}(\alpha_x,\alpha_y)} \set{\int_0^1 u^{\alpha_x-1}\tonde{1-u}^{\alpha_y-1}\tonde{\frac{\alpha_x}{|\alpha|}\tonde{1-u}-\frac{\alpha_y}{|\alpha|}u}^2\dd u }\\
	&+\frac{c_{xy}}{2\,{\rm B}(\alpha_x,\alpha_y)} \set{\int_0^1 v^{\alpha_y-1}\tonde{1-v}^{\alpha_x-1}\tonde{\frac{\alpha_y}{|\alpha|}\tonde{1-v}-\frac{\alpha_x}{|\alpha|}v}^2\dd v}\\
	&= \frac{c_{xy}}{|\alpha|^2}\frac{\alpha_x\alpha_y}{\alpha_x+\alpha_y+1}\fstop
\end{align}
In conclusion, substituting these values into \eqref{eq:gamma-general} yields
\begin{equation}
	\gamma_{\rm KMP}(G,\alpha)=\min_{x,y \,:\,c_{xy}>0} \frac{\alpha_x+\alpha_y}{\alpha_x+\alpha_y+1}\tonde{1+\frac1{|\alpha|}}= \frac{\alpha_{2,\rm min}}{\alpha_{2,\rm min}+1}\tonde{1+\frac1{|\alpha|}}\comma
\end{equation}
where $\alpha_{2,\rm min}$ is given in \eqref{eq:alpha-2-min}. This corresponds to the main claim in Theorem \ref{th:gap-beta}.

\subsubsection{Sharpness for ${\rm KMP}$}\label{sec:KMP-sharpness}
		When $\alpha_x\equiv \hat \alpha>0$ is constant, for the KMP model we recover
	\begin{equation}
		\gamma_{\rm KMP}(G,\alpha)= \frac{2\hat \alpha+\frac2n}{2\hat \alpha+1}\comma\qquad r_{xy}=\frac{c_{xy}}{2}\fstop
	\end{equation}

On the complete graph $G=K_n$ with $c_{xy}\equiv \frac1n$, it is immediate to check that ${\rm gap}_1(G,\alpha)=\frac12$. Hence, 
Theorem \ref{th:main-rev} yields the following lower bound for ${\rm gap}(G,\alpha)$:
\begin{equation}
	\gamma_{\rm KMP}(G,\alpha)\, {\rm gap}_1(G,\alpha)=\frac12 \frac{2\hat \alpha+\frac2n}{2\hat \alpha+1}\fstop
\end{equation} 
According to \cite[Eq.\ (10)]{caputo_mixing_2019}, this is precisely the spectral gap of the corresponding ${\rm KMP}$ model. Finally, recall that, in this homogeneous mean-field setting, the KMP model satisfies ${\rm gap}(G,\alpha)={\rm gap}_2(G,\alpha)$; see, e.g., \cite{caputo_kac2008}. 

On the segment with homogeneous $\alpha\equiv \hat \alpha \ge 1$, \cite[Theorem 2]{caputo_mixing_2019} proves that the upper bound in Theorem \ref{th:main-rev} is, in fact, an identity. See  \cite[Theorem 3]{labbe_petit_hydrodynamic_2025} for an analogous result on the segment with \begin{equation}\alpha_x=\hat\alpha\tonde{\frac{1+\sigma}{1-\sigma}}^{x-1}\comma\qquad x\in V=\set{1,\ldots, n}\comma \hat\alpha\ge 1\comma \sigma\in (0,1)\fstop
	\end{equation}

\subsection{Harmonic process} Here, we have
\begin{equation}
	\beta_{xy}(\dd u,\dd v)=c_{xy}\,	u^{\alpha_x-1}\tonde{1-u}^{-1}\dd u\otimes \delta_1(\dd v)\fstop
\end{equation}
For reversibility, see, e.g., \cite[Appendix A]{franceschini_frassek_giardina_integrable_2023} for the case $\alpha\equiv {\rm const.}$; the general case is analogously verified.

With this choice, we obtain
$	r_{xy}=c_{xy}\,\frac1{\alpha_x}$, for all $x,y\in V$,
and, clearly, these rates satisfy detailed balance with respect to $\pi$ given in \eqref{eq:pi-alpha}.
Further, we obtain
\begin{align}
	\chi_{xy}=c_{xy}\set{\frac{\alpha_x}{|\alpha|}\int_0^1 u^{\alpha_x}\, \dd u + \frac{\alpha_y}{|\alpha|}\int_0^1 v^{\alpha_y}\ \dd v} = \frac{c_{xy}}{|\alpha|}\set{\frac{\alpha_x}{\alpha_x+1}+\frac{\alpha_y}{\alpha_y+1}} \comma
\end{align}
\begin{align}
	\sigma_{xy}=\frac{c_{xy}}{|\alpha|^2}\set{\frac{\alpha_x}{\alpha_x+1}+\frac{\alpha_y}{\alpha_y+1}}\comma
\end{align}
from which we get
\begin{equation}
	\gamma_{\rm HP}(G,\alpha)=\min_{x,y\,:\,c_{xy}>0} \set{\frac{\alpha_x}{\alpha_x+1}+\frac{\alpha_y}{\alpha_y+1}}\tonde{1+\frac1{|\alpha|}}\comma
\end{equation}
and, thus, the main conclusion of Theorem \ref{th:gap-HP}.
\subsubsection{Sharpness for ${\rm HP}$}\label{sec:HP-sharpness}
	We have $\gamma_{\rm HP}(G,\alpha)\ge 1$, whenever $\alpha_{\rm min}\ge 1$, establishing the spectral gap identity ${\rm gap}(G,\alpha)={\rm gap}_1(G,\alpha)$; the criterion in Corollary \ref{cor:aldous-conj} yields the same result.
	
	We now argue that the lower bound in Theorem \ref{th:gap-HP} is attained on the homogeneous complete graph, that is, when  $c_{xy}\equiv \frac1n$ and $\alpha\equiv \hat \alpha>0$. Indeed, in this case, we have ${\rm gap}_1(G,\alpha)={\hat\alpha}^{-1}$ and
	\begin{equation}
		 \gamma_{\rm HP}(G,\alpha)\,{\rm gap}_1(G,\alpha)
		 =\frac{2}{\hat \alpha}\frac{\hat\alpha+\frac{1}{n}}{\hat\alpha+1}\fstop
	\end{equation}
Solving $\cL f_{a}=-\lambda_2 f_{a}$ for the degree-two polynomial $f_{a}(\eta)=\sum_{x\in V}\eta_x^2-a$ with respect to $a\in \R$ and $\lambda >0$, a simple computation yields
	\begin{equation}
	 a= \frac1n\frac{\hat\alpha+1}{\hat\alpha +\frac1n}\comma\qquad 	\lambda_2 =\frac{2}{\hat\alpha}\frac{\hat \alpha+\frac1n}{\hat\alpha+1}= \gamma_{\rm HP}(G,\alpha)\, {\rm gap}_1(G,\alpha)\fstop
	\end{equation} 
	This proves the desired claim. Finally, note that  $\lambda_2< {\rm gap}_1(G,\alpha)$ if and only if $\hat \alpha<1-\frac2n $.

\subsection{Immediate exchange models} Here, fix $\kappa\in (0,\alpha_{\rm min})$, and consider
\begin{equation}
	\beta_{xy}(\dd u,\dd v)=\frac{c_{xy}}{2\,{\rm B}(\alpha_x-\kappa,\kappa)\,{\rm B}(\alpha_y-\kappa,\kappa)}\, u^{\alpha_x-\kappa-1}\tonde{1-u}^{\kappa-1} v^{\alpha_y-\kappa-1}\tonde{1-v}^{\kappa-1}\, \dd u\dd v\fstop
\end{equation}
For the proof of reversibility of the measure ${\rm Dir}(\alpha)$, see \cite{ginkel_redig_sau_duality_2016,redig_generalized_2017}.
Moreover, we readily obtain  the rates
$
	r_{xy}={c_{xy}\,\frac{\kappa}{\alpha_x}}$, $x,y \in V$,
for which, again, the measure in \eqref{eq:pi-alpha} is reversible. Further, we compute
\begin{align}
	\chi_{xy}&= \frac{c_{xy}}{2\,{\rm B}(\alpha_x-\kappa,\kappa)}	{\frac{\alpha_x}{|\alpha|}\int_0^1 u^{\alpha_x-\kappa}(1-u)^{\kappa}\dd u} +\frac{c_{xy}}{2\,{\rm B}(\alpha_y-\kappa,\kappa)}\frac{\alpha_y}{|\alpha|}\int_0^1 v^{\alpha_y-\kappa}(1-v)^\kappa\dd v\\
	&= \frac{c_{xy}\,\kappa}{|\alpha|}\set{\frac{{\alpha_x-\kappa}}{\alpha_x+1}+\frac{{\alpha_y-\kappa}}{\alpha_y+1}}\comma
\end{align}
and
\begin{align}
	\sigma_{xy}&= \frac{c_{xy}\,\kappa}{|\alpha|^2}\set{\frac{\alpha_x-\kappa}{\alpha_x+1}+\frac{\alpha_y-\kappa}{\alpha_y+1}}\fstop
\end{align}
As a consequence, this yields the desired claim of Theorem \ref{th:gap-IEM}, that is,
\begin{equation}
	\gamma_{\rm IEM}(G,\alpha,\kappa) = \min_{x,y\,:\,c_{xy}>0} \set{\frac{\alpha_x-\kappa}{\alpha_x+1}+\frac{\alpha_y-\kappa}{\alpha_y+1}}\tonde{1+\frac1{|\alpha|}}\fstop
\end{equation}
\subsubsection{Sharpness for ${\rm IEM}$}\label{sec:IEM-sharpness}Recall that we required $\kappa<\alpha_{\rm min}$; further, 
	$\gamma_{\rm IEM}(G,\alpha,\kappa)\ge 1$ if $\alpha_{\rm min}\ge 1+2\kappa$. Hence, under these conditions, the upper bound in Theorem \ref{th:gap-IEM} saturates to an identity. Corollary \ref{cor:aldous-conj} leads to the same conclusion.
	
	As similarly done in Section \ref{sec:HP-sharpness} for the HP, we show that on the homogeneous complete graph the lower bound is attained. Consider $c_{xy}\equiv \frac1n$, $\alpha\equiv \hat \alpha>0$ and $\kappa \in (0,\hat\alpha)$. Then, ${\rm gap}_1(G,\alpha)=\kappa\hat\alpha^{-1}$ and
	\begin{equation}\label{eq:IEM-eigenvalue-2}
		\gamma_{\rm IEM}(G,\alpha,\kappa)\,{\rm gap}_1(G,\alpha,\kappa) = \frac{2\,\kappa\tonde{\hat\alpha-\kappa}}{\hat \alpha^2}\frac{\hat\alpha+\frac1n}{\hat\alpha+1}\fstop
	\end{equation} 
	A direct computation shows that the eigenvalue $\lambda_2>0$ of $-\cL$ associated to the quadratic eigenfunction $f_{a}(\eta)=\sum_{x\in V}\eta_x^2-a$,  $a= \frac1n\frac{\hat\alpha+1}{\hat\alpha+\frac1n}$, equals the expression in \eqref{eq:IEM-eigenvalue-2}, as desired.

\begin{acknowledgement}
	The authors are grateful to Pietro Caputo for valuable discussions. FS thanks the organizers and support staff of the conference \textquotedblleft Intertwining between Probability, Analysis and Statistical Physics\textquotedblright, held at NUS, Singapore, where this project was initiated. While completing this work, MQ and FS were members of GNAMPA, INdAM, and acknowledge financial support through the GNAMPA project \textquotedblleft Redistribution models on networks\textquotedblright.
 SK was supported by Basic Science Research Program through the National Research Foundation of Korea funded by the Ministry of Science and ICT (RS-2025-00518980) and by the Yonsei University Research Fund of 2025 (2025-22-0133).
\end{acknowledgement}

%\bibliographystyle{alpha}
%\bibliography{../../../bookshelf}

\newcommand{\etalchar}[1]{$^{#1}$}

\end{document}